\chardef\@x10\chardef\@xv60
\def\tcitime{
\def\@time{%
  \@minute\time\@hour\@minute\divide\@hour\@xv
  \ifnum\@hour<\@x 0\fi\the\@hour:%
  \multiply\@hour\@xv\advance\@minute-\@hour
  \ifnum\@minute<\@x 0\fi\the\@minute
  }}%
\def\QCTOpt[#1]#2{%
  \def\QCTOptB{#1}
  \def\QCTOptA{#2}
}
\def\QCTNOpt#1{%
  \def\QCTOptA{#1}
  \let\QCTOptB\empty
}
\def\Qct{%
  \@ifnextchar[{%
    \QCTOpt}{\QCTNOpt}
}
\def\QCBOpt[#1]#2{%
  \def\QCBOptB{#1}
  \def\QCBOptA{#2}
}
\def\QCBNOpt#1{%
  \def\QCBOptA{#1}
  \let\QCBOptB\empty
}
\def\Qcb{%
  \@ifnextchar[{%
    \QCBOpt}{\QCBNOpt}
}
\def\PrepCapArgs{%
  \ifx\QCBOptA\empty
    \ifx\QCTOptA\empty
      {}%
    \else
      \ifx\QCTOptB\empty
        {\QCTOptA}%
      \else
        [\QCTOptB]{\QCTOptA}%
      \fi
    \fi
  \else
    \ifx\QCBOptA\empty
      {}%
    \else
      \ifx\QCBOptB\empty
        {\QCBOptA}%
      \else
        [\QCBOptB]{\QCBOptA}%
      \fi
    \fi
  \fi
}
\def\GRAPHICSPS#1{%
 \ifcase\GRAPHICSTYPE
   \special{ps: #1}%
 \or
   \special{language "PS", include "#1"}%
 \fi
}%
\def\graffile#1#2#3#4{%
    \leavevmode
    \raise -#4 \BOXTHEFRAME{%
        \hbox to #2{\raise #3\hbox to #2{\null #1\hfil}}}%
}%
\def\draftbox#1#2#3#4{%
 \leavevmode\raise -#4 \hbox{%
  \frame{\rlap{\protect\tiny #1}\hbox to #2%
   {\vrule height#3 width\z@ depth\z@\hfil}%
  }%
 }%
}%
\newif\ifwasdraft
\def\GRAPHIC#1#2#3#4#5{%
 \ifnum\draft=\@ne\draftbox{#2}{#3}{#4}{#5}%
  \else\graffile{#1}{#3}{#4}{#5}%
  \fi
 }%
\def\addtoLaTeXparams#1{%
    \edef\LaTeXparams{\LaTeXparams #1}}%
\newif\ifBoxFrame \BoxFramefalse
\newif\ifOverFrame \OverFramefalse
\newif\ifUnderFrame \UnderFramefalse
\def\BOXTHEFRAME#1{%
   \hbox{%
      \ifBoxFrame
         \frame{#1}%
      \else
         {#1}%
      \fi
   }%
}
\def\doFRAMEparams#1{\BoxFramefalse\OverFramefalse\UnderFramefalse\readFRAMEparams#1\end}%
\def\readFRAMEparams#1{%
 \ifx#1\end%
  \let\next=\relax
  \else
  \ifx#1i\dispkind=\z@\fi
  \ifx#1d\dispkind=\@ne\fi
  \ifx#1f\dispkind=\tw@\fi
  \ifx#1t\addtoLaTeXparams{t}\fi
  \ifx#1b\addtoLaTeXparams{b}\fi
  \ifx#1p\addtoLaTeXparams{p}\fi
  \ifx#1h\addtoLaTeXparams{h}\fi
  \ifx#1X\BoxFrametrue\fi
  \ifx#1O\OverFrametrue\fi
  \ifx#1U\UnderFrametrue\fi
  \ifx#1w
    \ifnum\draft=1\wasdrafttrue\else\wasdraftfalse\fi
    \draft=\@ne
  \fi
  \let\next=\readFRAMEparams
  \fi
 \next
 }%
\def\IFRAME#1#2#3#4#5#6{%
      \bgroup
      \let\QCTOptA\empty
      \let\QCTOptB\empty
      \let\QCBOptA\empty
      \let\QCBOptB\empty
      #6%
      \parindent=0pt%
      \leftskip=0pt
      \rightskip=0pt
      \setbox0 = \hbox{\QCBOptA}%
      \@tempdima = #1\relax
      \ifOverFrame
          \typeout{This is not implemented yet}%
          \show\HELP
      \else
         \ifdim\wd0>\@tempdima
            \advance\@tempdima by \@tempdima
            \ifdim\wd0 >\@tempdima
               \textwidth=\@tempdima
               \setbox1 =\vbox{%
                  \noindent\hbox to \@tempdima{\hfill\GRAPHIC{#5}{#4}{#1}{#2}{#3}\hfill}\\%
                  \noindent\hbox to \@tempdima{\parbox[b]{\@tempdima}{\QCBOptA}}%
               }%
               \wd1=\@tempdima
            \else
               \textwidth=\wd0
               \setbox1 =\vbox{%
                 \noindent\hbox to \wd0{\hfill\GRAPHIC{#5}{#4}{#1}{#2}{#3}\hfill}\\%
                 \noindent\hbox{\QCBOptA}%
               }%
               \wd1=\wd0
            \fi
         \else
            \ifdim\wd0>0pt
              \hsize=\@tempdima
              \setbox1 =\vbox{%
                \unskip\GRAPHIC{#5}{#4}{#1}{#2}{0pt}%
                \break
                \unskip\hbox to \@tempdima{\hfill \QCBOptA\hfill}%
              }%
              \wd1=\@tempdima
           \else
              \hsize=\@tempdima
              \setbox1 =\vbox{%
                \unskip\GRAPHIC{#5}{#4}{#1}{#2}{0pt}%
              }%
              \wd1=\@tempdima
           \fi
         \fi
         \@tempdimb=\ht1
         \advance\@tempdimb by \dp1
         \advance\@tempdimb by -#2%
         \advance\@tempdimb by #3%
         \leavevmode
         \raise -\@tempdimb \hbox{\box1}%
      \fi
      \egroup%
}%
\def\DFRAME#1#2#3#4#5{%
 \begin{center}
     \let\QCTOptA\empty
     \let\QCTOptB\empty
     \let\QCBOptA\empty
     \let\QCBOptB\empty
     \ifOverFrame 
        #5\QCTOptA\par
     \fi
     \GRAPHIC{#4}{#3}{#1}{#2}{\z@}
     \ifUnderFrame 
        \nobreak\par #5\QCBOptA
     \fi
 \end{center}%
 }%
\def\FFRAME#1#2#3#4#5#6#7{%
 \begin{figure}[#1]%
  \let\QCTOptA\empty
  \let\QCTOptB\empty
  \let\QCBOptA\empty
  \let\QCBOptB\empty
  \ifOverFrame
    #4
    \ifx\QCTOptA\empty
    \else
      \ifx\QCTOptB\empty
        \caption{\QCTOptA}%
      \else
        \caption[\QCTOptB]{\QCTOptA}%
      \fi
    \fi
    \ifUnderFrame\else
      \label{#5}%
    \fi
  \else
    \UnderFrametrue%
  \fi
  \begin{center}\GRAPHIC{#7}{#6}{#2}{#3}{\z@}\end{center}%
  \ifUnderFrame
    #4
    \ifx\QCBOptA\empty
      \caption{}%
    \else
      \ifx\QCBOptB\empty
        \caption{\QCBOptA}%
      \else
        \caption[\QCBOptB]{\QCBOptA}%
      \fi
    \fi
    \label{#5}%
  \fi
  \end{figure}%
 }%
\def\makeactives{
  \catcode`\"=\active
  \catcode`\;=\active
  \catcode`\:=\active
  \catcode`\'=\active
  \catcode`\~=\active
}
   \gdef\activesoff{%
      \def"{\string"}
      \def;{\string;}
      \def:{\string:}
      \def'{\string'}
      \def~{\string~}
    }
\def\FRAME#1#2#3#4#5#6#7#8{%
 \bgroup
 \@ifundefined{bbl@deactivate}{}{\activesoff}
 \ifnum\draft=\@ne
   \wasdrafttrue
 \else
   \wasdraftfalse%
 \fi
 \def\LaTeXparams{}%
 \dispkind=\z@
 \def\LaTeXparams{}%
 \doFRAMEparams{#1}%
 \ifnum\dispkind=\z@\IFRAME{#2}{#3}{#4}{#7}{#8}{#5}\else
  \ifnum\dispkind=\@ne\DFRAME{#2}{#3}{#7}{#8}{#5}\else
   \ifnum\dispkind=\tw@
    \edef\@tempa{\noexpand\FFRAME{\LaTeXparams}}%
    \@tempa{#2}{#3}{#5}{#6}{#7}{#8}%
    \fi
   \fi
  \fi
  \ifwasdraft\draft=1\else\draft=0\fi{}%
  \egroup
 }%
\def\TEXUX#1{"texux"}
\def\limfunc#1{\mathop{\rm #1}}%
\long\def\QQQ#1#2{%
     \long\expandafter\def\csname#1\endcsname{#2}}%
\long\def\QQA#1#2{}%
\def\QTR#1#2{{\csname#1\endcsname #2}}
\def\EXPAND#1[#2]#3{}%
\def\NOEXPAND#1[#2]#3{}%
\def\LaTeXparent#1{}%
\def\ChildStyles#1{}%
\def\ChildDefaults#1{}%
\def\QTagDef#1#2#3{}%
\def\QQfnmark#1{\footnotemark}
\def\makeatletter\input gnuindex.sty\makeatother\makeindex{\makeatletter\input gnuindex.sty\makeatother\makeindex}%
\def\initial#1{\bigbreak{\raggedright\large\bf #1}\kern 2\p@\penalty3000}}%
 \def\abstract{%
  \if@twocolumn
   \section*{Abstract (Not appropriate in this style!)}%
   \else \small 
   \begin{center}{\bf Abstract\vspace{-.5em}\vspace{\z@}}\end{center}%
   \quotation 
   \fi
  }%
   \def\registered{\relax\ifmmode{}\r@gistered
                    \else$\m@th\r@gistered$\fi}%
 \def\r@gistered{^{\ooalign
  {\hfil\raise.07ex\hbox{$\scriptstyle\rm\text{R}$}\hfil\crcr
  \mathhexbox20D}}}}{}%
\newdimen\theight
\def\Column{%
 \vadjust{\setbox\z@=\hbox{\scriptsize\quad\quad tcol}%
  \theight=\ht\z@\advance\theight by \dp\z@\advance\theight by \lineskip
  \kern -\theight \vbox to \theight{%
   \rightline{\rlap{\box\z@}}%
   \vss
   }%
  }%
 }%
\def\qed{%
 \ifhmode\unskip\nobreak\fi\ifmmode\ifinner\else\hskip5\p@\fi\fi
 \hbox{\hskip5\p@\vrule width4\p@ height6\p@ depth1.5\p@\hskip\p@}%
 }%
\def\miss{\hbox{\vrule height2\p@ width 2\p@ depth\z@}}%
\def\tcol#1{{\baselineskip=6\p@ \vcenter{#1}} \Column}  %
\def\newfmtname{LaTeX2e}
\def\chkcompat{%
   \if@compatibility
   \else
     \usepackage{latexsym}
   \fi
}
  \DeclareOldFontCommand{\rm}{\normalfont\rmfamily}{\mathrm}
  \DeclareOldFontCommand{\sf}{\normalfont\sffamily}{\mathsf}
  \DeclareOldFontCommand{\tt}{\normalfont\ttfamily}{\mathtt}
  \DeclareOldFontCommand{\bf}{\normalfont\bfseries}{\mathbf}
  \DeclareOldFontCommand{\it}{\normalfont\itshape}{\mathit}
  \DeclareOldFontCommand{\sl}{\normalfont\slshape}{\@nomath\sl}
  \DeclareOldFontCommand{\sc}{\normalfont\scshape}{\@nomath\sc}
\def\alpha{{\Greekmath 010B}}%
\def\beta{{\Greekmath 010C}}%
\def\gamma{{\Greekmath 010D}}%
\def\delta{{\Greekmath 010E}}%
\def\epsilon{{\Greekmath 010F}}%
\def\zeta{{\Greekmath 0110}}%
\def\eta{{\Greekmath 0111}}%
\def\theta{{\Greekmath 0112}}%
\def\iota{{\Greekmath 0113}}%
\def\kappa{{\Greekmath 0114}}%
\def\lambda{{\Greekmath 0115}}%
\def\mu{{\Greekmath 0116}}%
\def\nu{{\Greekmath 0117}}%
\def\xi{{\Greekmath 0118}}%
\def\pi{{\Greekmath 0119}}%
\def\rho{{\Greekmath 011A}}%
\def\sigma{{\Greekmath 011B}}%
\def\tau{{\Greekmath 011C}}%
\def\upsilon{{\Greekmath 011D}}%
\def\phi{{\Greekmath 011E}}%
\def\chi{{\Greekmath 011F}}%
\def\psi{{\Greekmath 0120}}%
\def\omega{{\Greekmath 0121}}%
\def\varepsilon{{\Greekmath 0122}}%
\def\vartheta{{\Greekmath 0123}}%
\def\varpi{{\Greekmath 0124}}%
\def\varrho{{\Greekmath 0125}}%
\def\varsigma{{\Greekmath 0126}}%
\def\varphi{{\Greekmath 0127}}%
\def\nabla{{\Greekmath 0272}}
\def\FindBoldGroup{%
   {\setbox0=\hbox{$\mathbf{x\global\edef\theboldgroup{\the\mathgroup}}$}}%
}
\def\Greekmath#1#2#3#4{%
    \if@compatibility
        \ifnum\mathgroup=\symbold
           \mathchoice{\mbox{\boldmath$\displaystyle\mathchar"#1#2#3#4$}}%
                      {\mbox{\boldmath$\textstyle\mathchar"#1#2#3#4$}}%
                      {\mbox{\boldmath$\scriptstyle\mathchar"#1#2#3#4$}}%
                      {\mbox{\boldmath$\scriptscriptstyle\mathchar"#1#2#3#4$}}%
        \else
           \mathchar"#1#2#3#4%
        \fi 
    \else 
        \FindBoldGroup
        \ifnum\mathgroup=\theboldgroup 
           \mathchoice{\mbox{\boldmath$\displaystyle\mathchar"#1#2#3#4$}}%
                      {\mbox{\boldmath$\textstyle\mathchar"#1#2#3#4$}}%
                      {\mbox{\boldmath$\scriptstyle\mathchar"#1#2#3#4$}}%
                      {\mbox{\boldmath$\scriptscriptstyle\mathchar"#1#2#3#4$}}%
        \else
           \mathchar"#1#2#3#4%
        \fi     	    
	  \fi}
\newif\ifGreekBold  \GreekBoldfalse
\let\SAVEPBF=\pbf
\def\pbf{\GreekBoldtrue\SAVEPBF}%
  \newcounter{equationnumber}  
  \def\mathletters{%
     \addtocounter{equation}{1}
     \edef\@currentlabel{\theequation}%
     \setcounter{equationnumber}{\c@equation}
     \setcounter{equation}{0}%
     \edef\theequation{\@currentlabel\noexpand\alph{equation}}%
  }
    \def\BibTeX{{\rm B\kern-.05em{\sc i\kern-.025em b}\kern-.08em
                 T\kern-.1667em\lower.7ex\hbox{E}\kern-.125emX}}}{}%
\def\AmS{{\protect\usefont{OMS}{cmsy}{m}{n}%
                A\kern-.1667em\lower.5ex\hbox{M}\kern-.125emS}}}{}%
\let\DOTSI\relax
\def\eat@#1{}%
\def\RIfM@{\relax\ifmmode}%
\def\FN@{\futurelet\next}%
\def\iint{\DOTSI\intno@\tw@\FN@\ints@}%
\def\iiint{\DOTSI\intno@\thr@@\FN@\ints@}%
\def\iiiint{\DOTSI\intno@4 \FN@\ints@}%
\def\idotsint{\DOTSI\intno@\z@\FN@\ints@}%
\def\ints@{\findlimits@\ints@@}%
\newif\iflimtoken@
\newif\iflimits@
\def\findlimits@{\limtoken@true\ifx\next\limits\limits@true
 \else\ifx\next\nolimits\limits@false\else
 \limtoken@false\ifx\ilimits@\nolimits\limits@false\else
 \ifinner\limits@false\else\limits@true\fi\fi\fi\fi}%
\def\multint@{\int\ifnum\intno@=\z@\intdots@                          
 \else\intkern@\fi                                                    
 \ifnum\intno@>\tw@\int\intkern@\fi                                   
 \ifnum\intno@>\thr@@\int\intkern@\fi                                 
 \int}
\def\multintlimits@{\intop\ifnum\intno@=\z@\intdots@\else\intkern@\fi
 \ifnum\intno@>\tw@\intop\intkern@\fi
 \ifnum\intno@>\thr@@\intop\intkern@\fi\intop}%
\def\intic@{%
    \mathchoice{\hskip.5em}{\hskip.4em}{\hskip.4em}{\hskip.4em}}%
\def\negintic@{\mathchoice
 {\hskip-.5em}{\hskip-.4em}{\hskip-.4em}{\hskip-.4em}}%
\def\ints@@{\iflimtoken@                                              
 \def\ints@@@{\iflimits@\negintic@
   \mathop{\intic@\multintlimits@}\limits                             
  \else\multint@\nolimits\fi                                          
  \eat@}
 \else                                                                
 \def\ints@@@{\iflimits@\negintic@
  \mathop{\intic@\multintlimits@}\limits\else
  \multint@\nolimits\fi}\fi\ints@@@}%
\def\intkern@{\mathchoice{\!\!\!}{\!\!}{\!\!}{\!\!}}%
\def\plaincdots@{\mathinner{\cdotp\cdotp\cdotp}}%
\def\intdots@{\mathchoice{\plaincdots@}%
 {{\cdotp}\mkern1.5mu{\cdotp}\mkern1.5mu{\cdotp}}%
 {{\cdotp}\mkern1mu{\cdotp}\mkern1mu{\cdotp}}%
 {{\cdotp}\mkern1mu{\cdotp}\mkern1mu{\cdotp}}}%
\def\RIfM@{\relax\protect\ifmmode}
\def\text{\RIfM@\expandafter\text@\else\expandafter\mbox\fi}
\let\nfss@text\text
\def\text@#1{\mathchoice
   {\textdef@\displaystyle\f@size{#1}}%
   {\textdef@\textstyle\tf@size{\firstchoice@false #1}}%
   {\textdef@\textstyle\sf@size{\firstchoice@false #1}}%
   {\textdef@\textstyle \ssf@size{\firstchoice@false #1}}%
   \glb@settings}
\def\textdef@#1#2#3{\hbox{{%
                    \everymath{#1}%
                    \let\f@size#2\selectfont
                    #3}}}
\newif\iffirstchoice@
\def\Let@{\relax\iffalse{\fi\let\\=\cr\iffalse}\fi}%
\def\vspace@{\def\vspace##1{\crcr\noalign{\vskip##1\relax}}}%
\def\multilimits@{\bgroup\vspace@\Let@
 \baselineskip\fontdimen10 \scriptfont\tw@
 \advance\baselineskip\fontdimen12 \scriptfont\tw@
 \lineskip\thr@@\fontdimen8 \scriptfont\thr@@
 \lineskiplimit\lineskip
 \vbox\bgroup\ialign\bgroup\hfil$\m@th\scriptstyle{##}$\hfil\crcr}%
\def\Sb{_\multilimits@}%
\def\endSb{\crcr\egroup\egroup\egroup}%
\def\Sp{^\multilimits@}%
\newdimen\ex@
\def\rightarrowfill@#1{$#1\m@th\mathord-\mkern-6mu\cleaders
 \hbox{$#1\mkern-2mu\mathord-\mkern-2mu$}\hfill
 \mkern-6mu\mathord\rightarrow$}%
\def\leftarrowfill@#1{$#1\m@th\mathord\leftarrow\mkern-6mu\cleaders
 \hbox{$#1\mkern-2mu\mathord-\mkern-2mu$}\hfill\mkern-6mu\mathord-$}%
\def\leftrightarrowfill@#1{$#1\m@th\mathord\leftarrow
\mkern-6mu\cleaders
 \hbox{$#1\mkern-2mu\mathord-\mkern-2mu$}\hfill
 \mkern-6mu\mathord\rightarrow$}%
\def\overrightarrow{\mathpalette\overrightarrow@}%
\def\overrightarrow@#1#2{\vbox{\ialign{##\crcr\rightarrowfill@#1\crcr
 \noalign{\kern-\ex@\nointerlineskip}$\m@th\hfil#1#2\hfil$\crcr}}}%
\def\overleftarrow{\mathpalette\overleftarrow@}%
\def\overleftarrow@#1#2{\vbox{\ialign{##\crcr\leftarrowfill@#1\crcr
 \noalign{\kern-\ex@\nointerlineskip}$\m@th\hfil#1#2\hfil$\crcr}}}%
\def\overleftrightarrow{\mathpalette\overleftrightarrow@}%
\def\overleftrightarrow@#1#2{\vbox{\ialign{##\crcr
   \leftrightarrowfill@#1\crcr
 \noalign{\kern-\ex@\nointerlineskip}$\m@th\hfil#1#2\hfil$\crcr}}}%
\def\underrightarrow{\mathpalette\underrightarrow@}%
\def\underrightarrow@#1#2{\vtop{\ialign{##\crcr$\m@th\hfil#1#2\hfil
  $\crcr\noalign{\nointerlineskip}\rightarrowfill@#1\crcr}}}%
\def\underleftarrow{\mathpalette\underleftarrow@}%
\def\underleftarrow@#1#2{\vtop{\ialign{##\crcr$\m@th\hfil#1#2\hfil
  $\crcr\noalign{\nointerlineskip}\leftarrowfill@#1\crcr}}}%
\def\underleftrightarrow{\mathpalette\underleftrightarrow@}%
\def\underleftrightarrow@#1#2{\vtop{\ialign{##\crcr$\m@th
  \hfil#1#2\hfil$\crcr
 \noalign{\nointerlineskip}\leftrightarrowfill@#1\crcr}}}%
\def\qopnamewl@#1{\mathop{\operator@font#1}\nlimits@}
\let\nlimits@\displaylimits
\def\setboxz@h{\setbox\z@\hbox}
\def\varlim@#1#2{\mathop{\vtop{\ialign{##\crcr
 \hfil$#1\m@th\operator@font lim$\hfil\crcr
 \noalign{\nointerlineskip}#2#1\crcr
 \noalign{\nointerlineskip\kern-\ex@}\crcr}}}}
 \def\rightarrowfill@#1{\m@th\setboxz@h{$#1-$}\ht\z@\z@
  $#1\copy\z@\mkern-6mu\cleaders
  \hbox{$#1\mkern-2mu\box\z@\mkern-2mu$}\hfill
  \mkern-6mu\mathord\rightarrow$}
\def\leftarrowfill@#1{\m@th\setboxz@h{$#1-$}\ht\z@\z@
  $#1\mathord\leftarrow\mkern-6mu\cleaders
  \hbox{$#1\mkern-2mu\copy\z@\mkern-2mu$}\hfill
  \mkern-6mu\box\z@$}
\def\projlim{\qopnamewl@{proj\,lim}}
\def\injlim{\qopnamewl@{inj\,lim}}
\def\varinjlim{\mathpalette\varlim@\rightarrowfill@}
\def\varprojlim{\mathpalette\varlim@\leftarrowfill@}
\def\varliminf{\mathpalette\varliminf@{}}
\def\varliminf@#1{\mathop{\underline{\vrule\@depth.2\ex@\@width\z@
   \hbox{$#1\m@th\operator@font lim$}}}}
\def\varlimsup{\mathpalette\varlimsup@{}}
\def\varlimsup@#1{\mathop{\overline
  {\hbox{$#1\m@th\operator@font lim$}}}}
\def\binom#1#2{{#1 \choose #2}}%
\def\align{\@verbatim \frenchspacing\@vobeyspaces \@alignverbatim
You are using the "align" environment in a style in which it is not defined.}
\let\csname endalign*\endcsname =\endtrivlist
\def\alignat{\@verbatim \frenchspacing\@vobeyspaces \@alignatverbatim
You are using the "alignat" environment in a style in which it is not defined.}
\let\csname endalignat*\endcsname =\endtrivlist
\def\xalignat{\@verbatim \frenchspacing\@vobeyspaces \@xalignatverbatim
You are using the "xalignat" environment in a style in which it is not defined.}
\let\csname endxalignat*\endcsname =\endtrivlist
\def\gather{\@verbatim \frenchspacing\@vobeyspaces \@gatherverbatim
You are using the "gather" environment in a style in which it is not defined.}
\let\csname endgather*\endcsname =\endtrivlist
\def\multiline{\@verbatim \frenchspacing\@vobeyspaces \@multilineverbatim
You are using the "multiline" environment in a style in which it is not defined.}
\let\csname endmultiline*\endcsname =\endtrivlist
\def\arrax{\@verbatim \frenchspacing\@vobeyspaces \@arraxverbatim
You are using a type of "array" construct that is only allowed in AmS-LaTeX.}
\def\tabulax{\@verbatim \frenchspacing\@vobeyspaces \@tabulaxverbatim
You are using a type of "tabular" construct that is only allowed in AmS-LaTeX.}
\let\csname endarrax*\endcsname =\endtrivlist
\let\csname endtabulax*\endcsname =\endtrivlist
\def\@@eqncr{\let\@tempa\relax
    \ifcase\@eqcnt \def\@tempa{& & &}\or \def\@tempa{& &}%
      \else \def\@tempa{&}\fi
     \@tempa
     \if@eqnsw
        \iftag@
           \@taggnum
        \else
           \@eqnnum\stepcounter{equation}%
        \fi
     \fi
     \global\tag@false
     \global\@eqnswtrue
     \global\@eqcnt\z@\cr}
 \def\endequation{%
     \ifmmode\ifinner 
      \iftag@
        \addtocounter{equation}{-1} 
        $\hfil
           \displaywidth\linewidth\@taggnum\egroup \endtrivlist
        \global\tag@false
        \global\@ignoretrue   
      \else
        $\hfil
           \displaywidth\linewidth\@eqnnum\egroup \endtrivlist
        \global\tag@false
        \global\@ignoretrue 
      \fi
     \else   
      \iftag@
        \addtocounter{equation}{-1} 
        \eqno \hbox{\@taggnum}
        \global\tag@false%
        $$\global\@ignoretrue
      \else
        \eqno \hbox{\@eqnnum}
        $$\global\@ignoretrue
      \fi
     \fi\fi
 } 
 \newif\iftag@ \tag@false
 \def\tag{\@ifnextchar*{\@tagstar}{\@tag}}
 \def\@tag#1{%
     \global\tag@true
     \global\def\@taggnum{(#1)}}
 \def\@tagstar*#1{%
     \global\tag@true
     \global\def\@taggnum{#1}%
}
\theoremstyle{definition}
\theoremstyle{remark}
\numberwithin{equation}{section}
\begin{document}
\title[$L^{p}$ growth and Hardy-Sobolev-Morrey inequalities]{$L^{p}$ measure of growth and higher order Hardy-Sobolev-Morrey inequalities
on $\Bbb{R}^{N}$}
\author{Patrick J. Rabier}
\address{Department of Mathematics, University of Pittsburgh, Pittsburgh, PA 15260,
USA}
\email{rabier@imap.pitt.edu}
\subjclass{46E35}
\keywords{Hardy inequality, Sobolev inequality, Morrey inequality, weighted Sobolev
space}
\maketitle

\begin{abstract}
When the growth at infinity of a function $u$ on $\Bbb{R}^{N}$ is compared
with the growth of $|x|^{s}$ for some $s\in \Bbb{R},$ this comparison is
invariably made pointwise. This paper argues that the comparison can also be
made in a suitably defined $L^{p}$ sense for every $1\leq p<\infty $ and
that, in this perspective, inequalities of Hardy, Sobolev or Morrey type
account for the fact that sub $|x|^{-N/p}$ growth of $\nabla u$ in the $%
L^{p} $ sense implies sub $|x|^{1-N/p}$ growth of $u$ in the $L^{q}$ sense
for well chosen values of $q.$

By investigating how sub $|x|^{s}$ growth of $\nabla ^{k}u$ in the $L^{p}$
sense implies sub $|x|^{s+j}$ growth of $\nabla ^{k-j}u$ in the $L^{q}$
sense for (almost) arbitrary $s\in \Bbb{R}$ and for $q$ in a $p$-dependent
range of values, a family of higher order Hardy/Sobolev/Morrey type
inequalities is obtained, under optimal integrability assumptions.

These optimal inequalities take the form of estimates for $\nabla
^{k-j}(u-\pi _{u}),1\leq j\leq k,$ where $\pi _{u}$ is a suitable polynomial
of degree at most $k-1,$ which is unique if and only if $s<-k.$ More
generally, it can be chosen independent of $(s,p)$ when $s$ remains in the
same connected component of $\Bbb{R}\backslash \{-k,...,-1\}.$
\end{abstract}

\section{Introduction\label{intro}}

Unless specified otherwise, $\Bbb{R}^{N}$ is the domain of all function
spaces. If $s>-1,u\in \mathcal{D}^{\prime }$ (distributions) and $\nabla
u\in (L_{loc}^{\infty })^{N}$ grows slower than $|x|^{s}$ at infinity for
some $s>-1,$ then $u$ grows slower than $|x|^{s+1}$ at infinity. In this
statement, growth is understood pointwise, outside a set of Lebesgue measure 
$0$ and the precise result is that if $(1+|x|)^{-s}\nabla u\in (L^{\infty
})^{N},$ then $(1+|x|)^{-s-1}u\in L^{\infty }.$ This property breaks down if 
$s\leq -1$ but, if $s<-1$ and $N>1,$ it is still true that $%
(1+|x|)^{-s-1}(u-c_{u})\in L^{\infty }$ for a unique constant $c_{u}.$

The pointwise criterion is only one of the ways to compare the growth of a
function against the growth of the powers of $|x|,$ but it is not
necessarily the most useful one. For instance, it is notorious that
pointwise growth has little relevance for functions of $L_{loc}^{p}$ with $%
1\leq p<\infty ,$ and, from the context, it is intuitively clear that an $
L^{p}$ evaluation of growth could only be more adequate.

Such an $L^{p}$ measure of growth can be captured by various closely related
but non-equivalent definitions. The option chosen in this paper is to say
that $u\in L_{loc}^{p}$ grows slower than $|x|^{s}$ in the $L^{p}$ sense if $%
(1+|x|)^{-s-N/p}u\in L^{p}.$ This is justified by the remarks that the
function $u(x):=(1+|x|)^{t}$ satisfies this condition if and only if $t<s$
and that the pointwise concept is recovered when $p=\infty $ although, in
this case, $(1+|x|)^{-s}u\in L^{\infty }$ still holds if $u$ grows as fast
as $|x|^{s}$ at infinity. Strictly slower growth requires the stronger $%
\lim_{R\rightarrow \infty }\limfunc{ess}\sup_{|x|>R}|x|^{-s}|u|=0$ or simply 
$\lim_{|x|\rightarrow \infty }|x|^{-s}u(x)=0$ if $u$ is continuous. In
particular, $u\in L^{p}$ with $p<\infty $ ($p=\infty $) if and only if $u$
grows slower than $|x|^{-N/p}$ (no faster than $|x|^{0}=1$) in the $L^{p}$
sense. Of course, the choice of a scale based on $1+|x|$ rather than $|x|$
is meant to avoid integrability issues near the origin, which have nothing
to do with behavior at infinity.

It is a natural question whether the feature of the $p=\infty $ case
highlighted in the first paragraph is preserved when $p<\infty :$ If $s\neq
-1$ and $\nabla u$ grows slower than $|x|^{s}$ in the $L^{p}$ sense, is
there a constant $c_{u}$ such that $u-c_{u}$ grows slower than $|x|^{s+1}$
in the $L^{p}$ sense, or in the $L^{q}$ sense for some $q\neq p?$ If $s>-1,$
is it possible to choose $c_{u}=0?$ Although some widely explored issues,
such as inequalities of Hardy, Sobolev or Morrey type, turn out to be
intimately related to these questions, they have apparently not been tackled
up front and the connection between familiar inequalities and growth
transfer from gradient to function, while intuitively obvious, has
nonetheless remained rather vague.

This paper investigates the more general growth transfer property in the $
L^{p}-L^{q}$ sense, when $\nabla u$ is replaced with $\nabla ^{k}u$ for some 
$k\in \Bbb{N}$ and $s\in \Bbb{R}\backslash \{-k,...,-1\}.$ To deal with the
excluded values, the discussion should incorporate a logarithmic scale and
is omitted. Also, it will be necessary to assume $N>1$ when $s<-1,$ although
this restriction can be lifted when $\Bbb{R}$ is replaced with $\Bbb{R}_{\pm
}.$

The space 
\begin{equation}
L_{s}^{q}:=\{u\in L_{loc}^{q}:(1+|x|)^{-s-N/q}u\in L^{q}\},1\leq q\leq
\infty ,  \label{1}
\end{equation}
that embodies sub $|x|^{s}$ growth in the $L^{q}$ sense if $q<\infty $ (and
up to $|x|^{s}$ growth if $q=\infty $) is equipped with the Banach space
norm 
\begin{equation}
||u||_{L_{s}^{q}}:=||(1+|x|)^{-s-N/q}u||_{q},  \label{2}
\end{equation}
where $||\cdot ||_{q}$ is the $L^{q}$ norm. If $q<\infty ,$ then $%
L_{s}^{q}=L^{q}(\Bbb{R}^{N};(1+|x|)^{-sq-N}dx),$ with identical norms.

A little more notation must be introduced to give a concise summary of the
results. The number 
\begin{equation}
\nu (k,N):=\binom{N+k-1}{k},  \label{3}
\end{equation}
is the dimension of the space of real symmetric tensors of order $k\in \Bbb{N%
}$ and, for $d\in \Bbb{Z},\mathcal{P}_{d}$ denotes the space of polynomials
of degree at most $d,$ with the usual agreement that $\mathcal{P}_{d}=\{0\}$
if $d<0.$ Lastly, if $j\in \Bbb{N}$ and $1\leq p\leq \infty ,$ we set $%
p^{*j}:=Np/(N-jp)$ if $p<N/j$ and $p^{*j}:=\infty $ otherwise and 
\begin{equation}
I_{j,p}=\left\{ 
\begin{array}{l}
\lbrack p,p^{*j}]\text{ if }p\neq N/j\text{ or if }p=N=j=1,\quad \\ 
\lbrack p,\infty )\text{ if }p=N/j\text{ with }N>1.\text{ }
\end{array}
\right.  \label{4}
\end{equation}
In particular, $I_{j,p}=[p,\infty ]$ irrespective of $j$ if $N=1.$

The main result (Theorem \ref{th9}) states that if $k\in \Bbb{N},1\leq
p<\infty $ and either $s>-1$ or $N>1$ and $s\notin \{-k,...,-1\}$ and if $%
\nabla ^{k}u\in (L_{s}^{p})^{\nu (k,N)},$ there is a polynomial $\pi _{u}\in 
\mathcal{P}_{k-1}$ such that $\nabla ^{k-j}(u-\pi _{u})\in
(L_{s+j}^{q})^{\nu (k-j,N)}$ for every $1\leq j\leq k$ and every $q\in
I_{j,p}$ and there is a constant $C>0$ independent of $u$ such that 
\begin{equation}
||\,|\nabla ^{k-j}(u-\pi _{u})|\,||_{L_{s+j}^{q}}\leq C||\,|\nabla
^{k}u|\,||_{L_{s}^{p}}.  \label{5}
\end{equation}
In particular, $\nabla ^{k-j}(u-\pi _{u})$ grows slower than $|x|^{s+j}$ in
the $L^{q}$ sense for every finite $q\in I_{j,p}$ and no faster than $
|x|^{s+j}$ in the $L^{\infty }$ sense when $p>N/j$ (so that $\infty \in
I_{j,p}$). We also show that, in the latter case, $\nabla ^{k-j}(u-\pi _{u})$
still grows slower than $|x|^{s+j},$ that is, $\lim_{|x|\rightarrow \infty
}|x|^{-(s+j)}(\nabla ^{k-j}u(x)-\nabla ^{k-j}\pi _{u}(x))=0.$ When $j=k$ and 
$s=-N/p$ (i.e., $\nabla ^{k}u\in (L^{p})^{\nu (k,N)}$), this pointwise
property was proved by Mizuta \cite{Mi86}, by a different method.

If $s>-1,$ then $\mathcal{P}_{j-1}\subset L_{s+j}^{q}$ irrespective of $q$
and (\ref{5}) implies $\nabla ^{k-j}u\in L_{s+j}^{q}$ for every $q\in
I_{j,p}.$ Thus, $\pi _{u}$ is irrelevant as regards the property $\nabla
^{k-j}(u-\pi _{u})\in (L_{s+k}^{q})^{\nu (k-j,N)},$ but it remains of course
essential for the validity of (\ref{5}).

The polynomial $\pi _{u}$ may be chosen independent of $(s,p)$ when $s$
remains in any connected component of $\Bbb{R}\backslash \{-k,...,-1\}$ and
its nature is different depending upon $k$ and $s$ and, to some extent, even 
$p.$ When $s>-1,$ there are many different ways to define a (generally
different) polynomial $\pi _{u},$ each one being more or less reminiscent of
a Taylor polynomial of $u$ of order $k-1.$ That $\pi _{u}$ may be chosen as
a genuine Taylor polynomial is only true when $p>N.$ Without this
restriction (and, still, $s>-1$), the coefficients of $\pi _{u}$ can be
obtained by averaging the partial derivatives of $u$ of order up to $k-1$ on
arbitrarily chosen balls independent of $u.$ For details, see Theorem \ref
{th3} when $k=1$ and the comments following Theorem \ref{th9} in general.

In contrast, $\pi _{u}$ is unique when $s<-k$ and its coefficients depend
only upon the behavior at infinity of the partial derivatives of $u$ of
order up to $k-1.$ If $s\in (-k,-1),$ the part of $\pi _{u}$ of higher
degree is unique and depends upon the behavior of the higher order partial
derivatives of $u$ at infinity and its part of lower degree can be chosen as
a Taylor polynomial of sorts, much like in the case when $s>-1.$ Naturally,
the meaning of higher and lower degree will be clarified.

All the spaces $L_{s}^{q}$ are dilation-invariant, which allows for scaling
arguments. In many cases, scaling produces inequalities (\ref{5}) in which
the weight $1+|x|$ may be replaced with $|x|.$ When $k=1$ and $s=-N/p,$
different choices of $q$ produce the following sample of at least partially
known inequalities:

(i) $|||x|^{-1}(u-u(0))\,||_{p}\leq C||\,|\nabla u|\,||_{p}$ if $p>N$ (with $
q=p$ and $\pi _{u}=u(0)$ in (\ref{5}), plus scaling). This is Hardy's
inequality.

(ii) $||u-c_{u}||_{p^{*}}\leq C||\,|\nabla u|\,||_{p}$ for a unique constant 
$c_{u}$ if $p<N$ (with $q=p^{*}$ and $\pi _{u}=c_{u}$ in (\ref{5})), a known
generalization of Sobolev's inequality (\cite[Section 6.7.5]{Ma11}).

(iii) $\sup_{x\in \Bbb{R}^{N}}|x|^{-1+N/p}|u(x)-u(0)|\leq C||\,|\nabla
u|\,||_{p}$ if $p>N$ (with $q=\infty $ and $\pi _{u}=u(0)$ in (\ref{5}),
plus scaling). This is Morrey's inequality.

(iv) $||(1+|x|)^{-1}(u-c_{u})\,||_{p}\leq C||\,|\nabla u|\,||_{p}$ for a
unique constant $c_{u}$ if $p<N$ (with $q=p$ and $\pi _{u}=c_{u}$ in (\ref{5}%
)), a variant (with $|x|$ replaced with $1+|x|$) and generalization of the
Hardy-Leray inequality\footnote{%
That is, Hardy's inequality when $p<N.$} when $u\in \mathcal{C}_{0}^{\infty
} $ (so that $c_{u}=0$) (\cite{Le34}, \cite[Section 2.8.1]{Ma11}). By
scaling, the Hardy-Leray inequality $|||x|^{-1}(u-c_{u})\,||_{p}\leq
C||\,|\nabla u|\,||_{p}$ follows under the more general assumption $c_{u}=0.$

Other values of $q,s$ or $k$ produce inequalities of the same type. We shall
refer to the texts by Maz'ya \cite{Ma11} and Opic and Kufner \cite{OpKu90}
for various related inequalities on $\Bbb{R}^{N}$ when $N>1.$ The papers by
Caffarelli, Kohn and Nirenberg \cite{CaKoNi84}, Catrina and Costa \cite
{CaCo09}, Gatto, Guti\'{e}rrez and Wheeden \cite{GaGuWh85}, Lin \cite{Li86}
and the author \cite{Ra12}, are in a similar spirit, but specifically
devoted to inequalities involving pure power weights $|x|^{s}.$ We do not
mention work limited to Muckenhoupt weights since $(1+|x|)^{s}$ need not
belong to this class.

Aside from technical differences due to the choice of weights, the
inequalities (\ref{5}) depart from those in the above and other works in
more basic aspects. In the literature, the focus has overwhelmingly been on
inequalities of the form (\ref{5}) when $\pi _{u}=0.$ Since this is not
typical, such inequalities can only be true under restrictive assumptions.
In fact, while (\ref{5}) holds under the optimal integrability condition $%
\nabla ^{k}u\in (L_{s}^{p})^{\nu (k,N)},$ the others assume, at the very
least, that $u$ belongs to some weighted Sobolev space (\cite{OpKu90}, \cite
{Ra12}) and, much more often, $u\in \mathcal{C}_{0}^{\infty }$ or $u\in 
\mathcal{C}_{0}^{\infty }(\Bbb{R}^{N}\backslash \{0\})$ (\cite{CaKoNi84}, 
\cite{CaCo09}, \cite{GaGuWh85}, \cite{Li86}, \cite{Ma11}, \cite{OpKu90}),
especially when $k>1$ (\cite{Li86}, \cite{Ma11}). In that regard, it is
instructive to observe that if $u\in \mathcal{C}_{0}^{\infty },$ then $\pi
_{u}=0$ when $\pi _{u}$ is determined by behavior at infinity (i.e., $s<-k$)
and also $\pi _{u}=0$ if $u\in \mathcal{C}_{0}^{\infty }(\Bbb{R}
^{N}\backslash \{0\})$ and $\pi _{u}$ may be chosen as a Taylor polynomial
at $0$ (i.e., $s>-1$).

The proof of Theorem \ref{th9} is by induction on $k.$ The case when $k=1$
(Theorem \ref{th8}) is more demanding and the proof has three steps. The
first two consist in proving the theorem when $q=p$ or when $u$ is radially
symmetric and $q\in I_{1,p},$ and either $s>-1$ (Theorem \ref{th3}) or $s<-1$
(Theorem \ref{th5}). The main ingredients include a property of
approximation by mollification in weighted spaces $L^{p}(\Bbb{R}^{N};wdx)$
when $1\leq p<\infty $ and $\log w$ is uniformly continuous (Lemma \ref{lm1}%
), two special cases of well-known one-dimensional Hardy-type inequalities
(Lemmas \ref{lm2} and \ref{lm4}) and the Poincar\'{e}-Wirtinger inequality
on bounded open subsets of $\Bbb{R}^{N}$ and on the sphere $\Bbb{S}^{N-1}.$

To prove Theorem \ref{th8} \thinspace when $q>p,$ we take advantage of the
fact that the radially symmetric case has already been settled to reduce the
problem when $u$ has a vanishing radial symmetrization. Under this
additional assumption, an elaboration on an argument first used by
Caffarelli, Kohn and Nirenberg \cite{CaKoNi84} (Lemma \ref{lm7}) completes
the proof.

The existence of $\pi _{u}$ depends only upon $\nabla ^{k}u$ being in $%
(L_{s}^{p})^{\nu (k,N)},$ but further assumptions about $u$ may have an
impact on $\pi _{u}.$ In Section \ref{application}, we use this remark to
sharpen and generalize known embedding theorems of weighted Sobolev spaces.
The transfer of sub-exponential growth is briefly discussed in Section \ref
{exponential}.

Throughout the paper, $C>0$ denotes a constant whose value may change from
place to place. The notation $B_{R}$ refers to the open ball with center $0$
and radius $R>0$ in $\Bbb{R}^{N}$ and $\widetilde{B}_{R}:=\Bbb{R}
^{N}\backslash \overline{B}_{R}.$ If $1\leq p\leq \infty ,$ the H\"{o}lder
conjugate of $p$ is denoted by $p^{\prime }.$ We shall also make use of the
norms $||\cdot ||_{p,\Omega },||\cdot ||_{p,\Bbb{S}^{N-1}}$ and $||\cdot
||_{1,p,\Omega }$ of $L^{p}(\Omega ),L^{p}(\Bbb{S}^{N-1})$ and (the
classical Sobolev space) $W^{1,p}(\Omega ),$ respectively.

\section{Preliminary first order inequalities when $s>-1$ \label{case1}}

We need a property of approximation by mollification in weighted Lebesgue
spaces $L^{p}(\Bbb{R}^{N};wdx)$ when $\log w$ is uniformly continuous. Just
to put things in perspective, recall that $\log w\in BMO$ if $w$ is a
Muckenhoupt weight (\cite{Mu72}).

\begin{lemma}
\label{lm1}Let $w>0$ be a function such that $\log w$ is uniformly
continuous on $\Bbb{R}^{N}.$ The following properties hold:\newline
(i) For every $\varepsilon >0,$ there is $\delta >0$ such that $w(x)\leq %
(1+\varepsilon )w(y)$ whenever $|x-y|<\delta .$\newline
(ii) For every $\varepsilon >0,$ there is $\delta >0$ such that $|w(x)-w(y)|%
\leq \varepsilon w(y)$ whenever $|x-y|<\delta .$\newline
(iii) If $\theta _{n}\in \mathcal{C}_{0}^{\infty }$ is a sequence of
mollifiers and if $u\in L^{p}(\Bbb{R}^{N};wdx)$ for some $1\leq p<\infty ,$
then $\theta _{n}*u\in L^{p}(\Bbb{R}^{N};wdx)$ for $n$ large enough and $%
\theta _{n}*u\rightarrow u$ in $L^{p}(\Bbb{R}^{N};wdx).$
\end{lemma}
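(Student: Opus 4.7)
For (i), the natural move is to translate uniform continuity of $\log w$ directly: given $\varepsilon>0$, pick $\eta=\log(1+\varepsilon)$ and apply the modulus of continuity of $\log w$ to get $\delta>0$ with $|\log w(x)-\log w(y)|\le\eta$ whenever $|x-y|<\delta$; exponentiating yields $w(x)/w(y)\le e^{\eta}=1+\varepsilon$. For (ii), I would reapply uniform continuity but now choose $\eta$ so that both $e^{\eta}-1\le\varepsilon$ and $1-e^{-\eta}\le\varepsilon$, which gives $(1-\varepsilon)w(y)\le w(x)\le(1+\varepsilon)w(y)$, i.e.\ $|w(x)-w(y)|\le\varepsilon w(y)$. (Equivalently, (ii) follows from (i) by replacing $\varepsilon$ with $\varepsilon/(1+\varepsilon)$ and symmetrizing.)

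For (iii) the plan has three steps. First, I would use (i) to obtain uniform boundedness of the convolution operators on $L^{p}(\mathbb{R}^{N};w\,dx)$ for $n$ large. Fix $\varepsilon>0$ and the associated $\delta$ from (i), and choose $n_{0}$ so that $\operatorname{supp}\theta_{n}\subset B_{\delta}$ for $n\ge n_{0}$. For any $u\in L^{p}(\mathbb{R}^{N};w\,dx)$, Minkowski's integral inequality gives
\begin{equation*}
\|\theta_{n}*u\|_{L^{p}(w\,dx)}\le\int \theta_{n}(y)\Bigl(\int|u(x-y)|^{p}w(x)\,dx\Bigr)^{\!1/p}dy,
\end{equation*}
and since $|x-(x-y)|=|y|<\delta$ gives $w(x)\le(1+\varepsilon)w(x-y)$, a translation inside the inner integral yields $\|\theta_{n}*u\|_{L^{p}(w\,dx)}\le(1+\varepsilon)^{1/p}\|u\|_{L^{p}(w\,dx)}$. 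In particular $\theta_{n}*u\in L^{p}(\mathbb{R}^{N};w\,dx)$ for $n\ge n_{0}$.

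Second, I would verify convergence on the dense subclass of compactly supported functions in $L^{p}(\mathbb{R}^{N};w\,dx)$. For such a $v$ supported in a compact set $K$, the mollifications $\theta_{n}*v$ are all supported in a fixed compact neighborhood $K'$ for $n\ge n_{0}$, so
\begin{equation*}
\|\theta_{n}*v-v\|_{L^{p}(w\,dx)}^{p}\le\bigl(\sup_{K'}w\bigr)\,\|\theta_{n}*v-v\|_{p}^{p}\longrightarrow0,
\end{equation*}
using the classical $L^{p}$ mollifier convergence and the local boundedness of $w$ implied by the continuity granted by (ii).

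Third, I would close the argument by an $\varepsilon/3$ density argument. Given $u\in L^{p}(\mathbb{R}^{N};w\,dx)$ and $\varepsilon>0$, approximate $u$ in $L^{p}(\mathbb{R}^{N};w\,dx)$ by a compactly supported $v$ (using $u\chi_{B_{R}}\to u$ by dominated convergence), then use the uniform bound from step one to control $\|\theta_{n}*(u-v)\|_{L^{p}(w\,dx)}$ and step two to force $\|\theta_{n}*v-v\|_{L^{p}(w\,dx)}\to0$. The main obstacle is really step one, since without some structural control on $w$ convolution need not preserve the weighted space; here property (i) supplies exactly the comparability of $w$ on $\delta$-balls needed to transfer the standard translation invariance argument to the weighted setting.
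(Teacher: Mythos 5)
Your parts (i) and (ii) match the paper's argument (exponentiate the modulus of continuity of $\log w$; symmetrize for (ii)). For part (iii) your route is correct but genuinely different from the paper's. You prove a uniform operator bound $\|\theta_{n}*u\|_{L^{p}(w\,dx)}\leq(1+\varepsilon)^{1/p}\|u\|_{L^{p}(w\,dx)}$ via Minkowski's integral inequality and (i), verify convergence on compactly supported functions (where $w$ is bounded above and below by continuity, so the classical unweighted mollifier convergence applies), and close with a three-epsilon density argument, using dominated convergence to see that truncations $u\chi_{B_{R}}$ are dense. The paper instead avoids any density step: writing $w_{p}:=w^{1/p}$, it proves the same pointwise bound $|(\theta_{n}*u)w_{p}|\leq(1+\varepsilon)^{1/p}\,\theta_{n}*(|u|w_{p})$ from (i) to get membership, and then splits $(\theta_{n}*u)w_{p}-uw_{p}$ into the commutator $[(\theta_{n}*u)w_{p}-\theta_{n}*(uw_{p})]$, which is controlled pointwise by $\varepsilon\,\theta_{n}*(|u|w_{p})$ using (ii) applied to $w_{p}$, plus the classical term $\theta_{n}*(uw_{p})-uw_{p}\to0$ in $L^{p}$. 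The commutator argument is more direct and isolates exactly where the slow variation of $w$ enters; your argument is the more standard functional-analytic one (uniform boundedness plus convergence on a dense class) and is equally valid. The only point you leave implicit is that a compactly supported $v\in L^{p}(w\,dx)$ lies in the unweighted $L^{p}$ (needed to invoke classical mollifier convergence); this follows from $\inf_{K}w>0$, which continuity and positivity of $w$ supply, so it is a cosmetic gap only.
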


\begin{proof}
(i) Choose $\delta >0$ such that $|x-y|<\delta \Rightarrow |\log w(x)-\log
w(y)|\leq \log (1+\varepsilon ).$

(ii) If $|x-y|<\delta $ with $\delta >0$ from (i), then $w(x)-w(y)\leq
\varepsilon w(y)$ and $w(y)-w(x)\leq \varepsilon w(x)\leq \varepsilon
(1+\varepsilon )w(y).$ Thus, $|w(x)-w(y)|\leq \varepsilon (1+\varepsilon
)w(y)$ and it suffices to replace $\varepsilon (1+\varepsilon )$ with $%
\varepsilon .$

(iii) With $\varepsilon $ and $\delta $ from (i), let $n$ be large enough
that $\limfunc{Supp}\theta _{n}\subset B_{\delta /2}.$ For simplicity of
notation, set $w_{p}:=w^{1/p},$ so that $uw_{p}\in L^{p}.$ Then, 
\begin{multline*}
|((\theta _{n}*u)w_{p})(x)|=\int_{B(x,\delta /2)}\theta
_{n}(x-y)u(y)w_{p}(x)dy \\
\leq (1+\varepsilon )^{1/p}\int_{B(x,\delta /2)}\theta
_{n}(x-y)|u(y)|w_{p}(y)dy=(1+\varepsilon )^{1/p}(\theta _{n}*(|u|w_{p}))(x).
\end{multline*}
This shows that $\theta _{n}*u\in L^{p}(\Bbb{R}^{N};wdx).$ To prove that $%
\theta _{n}*u\rightarrow u$ in $L^{p}(\Bbb{R}^{N};wdx),$ i.e., that $(\theta
_{n}*u)w_{p}\rightarrow uw_{p}$ in $L^{p},$ write $(\theta
_{n}*u)w_{p}-uw_{p}=[(\theta _{n}*u)w_{p}-\theta _{n}*(uw_{p})]+[\theta
_{n}*(uw_{p})-uw_{p}].$ The latter bracket tends to $0$ in $L^{p}$ and it
suffices to prove that the same thing is true for the former.

Since $\log w_{p}=(1/p)\log w$ is uniformly continuous on $\Bbb{R}^{N},$
part (ii) is applicable to $w_{p}.$ Thus, given $\varepsilon >0,$ if $\delta
>0$ is small enough and if $n$ is large enough that $\limfunc{Supp}\theta
_{n}\subset B_{\delta /2},$ 
\begin{multline*}
|((\theta _{n}*u)w_{p})(x)-(\theta _{n}*(uw_{p}))(x)|\leq \int_{B(x,\delta
/2)}\theta _{n}(x-y)|u(y)||w_{p}(x)-w_{p}(y)|dy \\
\leq \varepsilon \int_{B(x,\delta /2)}\theta
_{n}(x-y)|u(y)|w_{p}(y)dy=\varepsilon (\theta _{n}*(|u|w_{p}))(x).
\end{multline*}
As a result, $||(\theta _{n}*u)w_{p}-\theta _{n}*(uw_{p})||_{p}\leq
\varepsilon ||\theta _{n}*(|u|w_{p})||_{p}.$ Since the right-hand side tends
to $\varepsilon ||uw_{p}||_{p}$ and $\varepsilon >0$ is arbitrary, $\lim
||(\theta _{n}*u)w_{p}-\theta _{n}*(uw_{p})||_{p}=0$ and the proof is
complete.
\end{proof}

\begin{remark}
\label{rm1}Obviously, Lemma \ref{lm1} is valid when $w(x)=(1+|x|)^{a}$ or $%
w(x)=e^{a|x|}$ and $a\in \Bbb{R}.$
\end{remark}

We shall also need a special case of a known one-dimensional weighted Hardy
inequality. Lemma \ref{lm2} below follows from Bradley \cite[Theorem 1]{Br78}
or Maz'ya \cite[p. 40 ff]{Ma11}. Since the weights $r^{t}$ and $(1+r)^{t}$
are equivalent on $[\rho ,\infty )$ with $\rho >0,$ it also follows directly
from Opic and Kufner \cite[Example 6.9, p. 70]{OpKu90} when $q<\infty .$

\begin{lemma}
\label{lm2} Suppose that $s>-1$ and that $1\leq p<\infty $ and let $\rho >0$
be given. \newline
(i) If $p\leq q<\infty ,$ there is a constant $C>0$ such that 
\begin{multline}
\left( \int_{\rho }^{\infty }(1+r)^{-(s+1)q-N}r^{N-1}|f(r)-f(\rho
)|^{q}dr\right) ^{1/q}  \label{6} \\
\leq C\left( \int_{\rho }^{\infty }(1+r)^{-sp-N}r^{N-1}|f^{\prime
}(r)|^{p}dr\right) ^{1/p},
\end{multline}
for every locally absolutely continuous function $f$ on $[\rho ,\infty ).$ 
\newline
(ii) There is a constant $C>0$ such that 
\begin{equation}
\sup_{r\geq R}(1+r)^{-(s+1)}|f(r)-f(R)|\leq C\left( \int_{R}^{\infty %
}(1+r)^{-sp-N}r^{N-1}|f^{\prime }(r)|^{p}dr\right) ^{1/p},  \label{7}
\end{equation}
for every $R\geq \rho $ and every locally absolutely continuous function $f$
on $[\rho ,\infty )$.
\end{lemma}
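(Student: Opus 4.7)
The plan is to recognize both estimates as one-dimensional weighted Hardy inequalities and invoke the Muckenhoupt--Bradley criterion. For part (i), write $f(r)-f(\rho)=\int_{\rho}^{r}f'(t)\,dt$ so that (\ref{6}) takes the standard Hardy form
\[
\left(\int_{\rho}^{\infty}V(r)\left|\int_{\rho}^{r}f'(t)\,dt\right|^{q}dr\right)^{1/q}\leq C\left(\int_{\rho}^{\infty}U(t)|f'(t)|^{p}dt\right)^{1/p},
\]
with $V(r):=(1+r)^{-(s+1)q-N}r^{N-1}$ and $U(t):=(1+t)^{-sp-N}t^{N-1}$. Bradley's theorem reduces matters, for $1<p\leq q<\infty$, to verifying
\[
A:=\sup_{r>\rho}\left(\int_{r}^{\infty}V\right)^{1/q}\left(\int_{\rho}^{r}U^{1-p'}\right)^{1/p'}<\infty,
\]
with the analogous condition for $p=1$ obtained by replacing the second factor by $\sup_{\rho<t<r}U(t)^{-1}$.

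To check $A<\infty$, I use $t\asymp 1+t$ on $[\rho,\infty)$ (since $\rho>0$) to reduce to pure-power weights; elementary one-variable computations then yield
\[
\int_{r}^{\infty}V\leq C(1+r)^{-(s+1)q},\qquad \int_{\rho}^{r}U^{1-p'}\leq C(1+r)^{p(s+1)/(p-1)},
\]
both crucially using $s>-1$ (for convergence of the first integral at $\infty$, and for the correct divergence rate of the second). Taking $q$-th and $p'$-th roots respectively, these factors are $\leq C(1+r)^{-(s+1)}$ and $\leq C(1+r)^{s+1}$, so $A<\infty$ and (i) follows; the $p=1$ case is similar since $\sup_{\rho<t<r}U(t)^{-1}\leq C(1+r)^{s+1}$.

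For part (ii), H\"older's inequality with exponents $(p,p')$ (or $(1,\infty)$ when $p=1$) gives
\[
|f(r)-f(R)|\leq \int_{R}^{r}|f'(t)|\,dt\leq \left(\int_{R}^{r}U|f'|^{p}\right)^{1/p}\left(\int_{R}^{r}U^{1-p'}\right)^{1/p'},
\]
and the second factor is $\leq C(1+r)^{s+1}$ by the same computation, with $C$ depending only on $\rho$. Dividing by $(1+r)^{s+1}$ and extending the first integral to $\infty$ yields (\ref{7}) uniformly in $R\geq\rho$. The only nontrivial step throughout is the explicit asymptotic computation of the two integrals, whose product balances to a bounded quantity precisely because $s>-1$; this hypothesis is therefore sharp at the present scaling level.
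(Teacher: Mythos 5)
Your proof is correct and follows essentially the route the paper itself takes: the paper disposes of Lemma~\ref{lm2} by citing Bradley's theorem (equivalently, Opic--Kufner after noting $r\asymp 1+r$ on $[\rho,\infty)$), and your verification of the Muckenhoupt condition $A<\infty$ is precisely the computation hidden in those citations; your exponent bookkeeping, including the $p=1$ case, checks out, and the balance $(1+r)^{-(s+1)}\cdot(1+r)^{s+1}$ is exactly where $s>-1$ enters. The one genuine (minor) difference is in part (ii): the paper again cites the literature for $R=\rho$ and then handles general $R\geq\rho$ by replacing $f$ with a truncation $f_R$ constant on $[\rho,R)$, whereas your direct H\"older argument yields the estimate with a constant uniform in $R$ in one stroke and is fully self-contained --- a slightly cleaner treatment of the same endpoint.
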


In (\ref{7}), the result when $R>\rho $ follows from the case when $R=\rho $
with $f$ replaced with $f_{R}=f(R)$ on $[\rho ,R)$ and $f_{R}=f$ on $%
[R,\infty ),$ so that $f_{R}(\rho )=f(R),f_{R}^{\prime }=0$ on $[\rho ,R)$
and $f_{R}^{\prime }=f^{\prime }$ on $(R,\infty ).$ This does not affect $C.$

\begin{theorem}
\label{th3}Suppose that $s>-1$ and that $1\leq p<\infty $. If $u\in \mathcal{
D}^{\prime }$ and $\nabla u\in (L_{s}^{p})^{N},$ set 
\begin{equation*}
c_{u}:=|B_{\rho }|^{-1}\int_{B_{\rho }}u,
\end{equation*}
where $\rho >0$ is chosen once and for all and independent of $u.$ Then:%
\newline
(i) $u\in L_{s+1}^{p}$ and there is a constant $C=C(s,p)>0$ independent of $u
$ such that 
\begin{equation}
||u-c_{u}||_{L_{s+1}^{p}}\leq C||\,|\nabla u|\,||_{L_{s}^{p}}.\newline
\label{8}
\end{equation}
(ii) If $N=1$ or if $u$ is radially symmetric, $u\in L_{s+1}^{q}$ for every $%
q\in I_{1,p}$ (see (\ref{4})) and there is a constant $C=C(s,p,q)>0$
independent of $u$ such that 
\begin{equation}
||u-c_{u}||_{L_{s+1}^{q}}\leq C||\,|\nabla u|\,||_{L_{s}^{p}}.  \label{9}
\end{equation}
Furthermore, if $p>N$ or $p=N=1$ (so that $\infty \in I_{1,p}$), $%
\lim_{|x|\rightarrow \infty }|x|^{-(s+1)}u(x)=0.$
\end{theorem}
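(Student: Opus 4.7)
The plan is to reduce to smooth $u$ by mollification and then work in polar coordinates, exploiting a three-piece decomposition of $u-c_u$. First I would observe that $c_u$ is well-defined: since the weight $(1+|x|)^{-sp-N}$ is bounded above and below on compacts, $\nabla u\in(L_{s}^{p})^{N}$ forces $\nabla u\in(L_{loc}^{p})^{N}$ and hence $u\in W_{loc}^{1,p}$. Because $\log(1+|x|)$ is uniformly continuous on $\Bbb{R}^{N}$, Lemma \ref{lm1} applies to $w(x)=(1+|x|)^{-sp-N}$, so mollifications $u_{n}=\theta _{n}\ast u$ are smooth with $\nabla u_{n}\to\nabla u$ in $(L_{s}^{p})^{N}$ and $c_{u_{n}}\to c_{u}$; thus I may assume $u\in\mathcal{C}^{\infty }$.

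For part (i), decompose $u-c_{u}=A+B+D$, where $A(x)=u(x)-\tilde u(|x|)$ is the angular defect, $B(x)=\tilde u(|x|)-\tilde u(\rho )$ is the radial defect, and $D=\tilde u(\rho )-c_{u}$ is a constant; here $\tilde u(r):=|\Bbb{S}^{N-1}|^{-1}\int_{\Bbb{S}^{N-1}}u(r\omega )\,d\omega$ is the spherical mean (for $N=1$, the two half-lines are treated separately and the angular piece is absent). On $\{|x|\geq \rho \}$ I would control $B$ by Lemma \ref{lm2}(i) with $q=p$ applied to $\tilde u$, using Jensen to bound $|\tilde u'(r)|^{p}$ by the spherical mean of $|\nabla u(r\omega )|^{p}$. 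For the angular piece $A$ on $\{|x|\geq \rho \}$, I would apply the Poincar\'e-Wirtinger inequality on $\Bbb{S}^{N-1}$ fiberwise at each radius $r$, bound the tangential gradient by $r|\nabla u|$, and absorb the extra $r^{p}$ into the weight using the uniform equivalence $r^{p}(1+r)^{-(s+1)p-N}\asymp (1+r)^{-sp-N}$ on $[\rho ,\infty )$. On $B_{\rho }$ the weight is bounded between positive constants, so the standard Poincar\'e-Wirtinger inequality on $B_{\rho }$ gives $\|u-c_{u}\|_{p,B_{\rho }}\leq C\|\,|\nabla u|\,\|_{p,B_{\rho }}$. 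Finally, for $D$, Poincar\'e-Wirtinger on $B_{\rho }$ applied to both the ball-mean $c_{u}$ and the sphere-mean $\tilde u(\rho )$ (both continuous linear functionals on $W^{1,p}(B_{\rho })$ that vanish on constants) yields $|D|\leq C\|\,|\nabla u|\,\|_{p,B_{\rho }}$; since $s>-1$ the constant $1$ lies in $L_{s+1}^{p}$, so $D$ contributes a finite amount.

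For part (ii), the angular piece $A$ vanishes under radial symmetry and is absent when $N=1$. For $q\in [p,\infty )\cap I_{1,p}$, I would apply Lemma \ref{lm2}(i) with this $q$ to $\tilde u$ on $[\rho ,\infty )$, and invoke the classical Sobolev embedding $W^{1,p}(B_{\rho })\hookrightarrow L^{q}(B_{\rho })$ inside the ball. For $q=\infty$ (admissible exactly when $p>N$ or $p=N=1$), Lemma \ref{lm2}(ii) replaces Lemma \ref{lm2}(i) and Morrey's embedding replaces Sobolev's. The pointwise assertion $\lim_{|x|\to \infty }|x|^{-(s+1)}u(x)=0$ follows by letting $R\to \infty$ in Lemma \ref{lm2}(ii): absolute continuity of the integral sends the right-hand side to $0$, so $(1+r)^{-(s+1)}|\tilde u(r)-\tilde u(R)|\to 0$ uniformly in $r\geq R$, while $s+1>0$ makes $(1+r)^{-(s+1)}|\tilde u(R)|\to 0$ as $r\to \infty$ for each fixed $R$.

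The main obstacle I anticipate is neither the radial nor the angular estimate, each of which is a direct application of either a one-dimensional weighted Hardy inequality or the Poincar\'e-Wirtinger inequality. Rather, it is the bookkeeping around the specific choice of $c_{u}$ as a ball-average: the natural constant arising from polar-coordinate arguments is the sphere-mean $\tilde u(\rho )$, and reconciling the two via Poincar\'e-Wirtinger on $B_{\rho }$ is what ultimately pins down the correct normalization. A secondary subtlety is verifying that the weight-arithmetic $r^{p}(1+r)^{-(s+1)p-N}\asymp (1+r)^{-sp-N}$ holds uniformly on $[\rho ,\infty )$, so that the implicit constants depend only on $s,p,\rho$ (and $q$ in part (ii)), not on $u$.
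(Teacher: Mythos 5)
Your argument is correct, and for part (i) it takes a genuinely different route from the paper. The paper does not split $u$ into radial and angular parts at this stage: it applies the one-dimensional Hardy inequality of Lemma \ref{lm2}(i) with $q=p$ separately along each ray $r\mapsto u(r,\sigma )$, integrates the resulting inequality over $\sigma \in \Bbb{S}^{N-1}$, and then controls the trace term $||u(\rho ,\cdot )||_{p,\Bbb{S}^{N-1}}$ by the trace theorem plus the Poincar\'{e}-Wirtinger inequality on $B_{\rho }$ (this last step is exactly your reconciliation of the sphere-mean with the ball-mean, so that part coincides). Your alternative applies Lemma \ref{lm2}(i) only to the spherical mean $\tilde{u}$ (via Jensen) and handles the angular defect $u(x)-\tilde{u}(|x|)$ by the fiberwise Poincar\'{e}-Wirtinger inequality on $\Bbb{S}^{N-1}$ together with $|\nabla _{\Bbb{S}^{N-1}}[u(r\cdot )]|\leq r|\nabla u(r\cdot )|$ and the bound $r^{p}(1+r)^{-(s+1)p-N}\leq (1+r)^{-sp-N}$ (only this one-sided inequality is needed, and it holds for all $r>0$). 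Both are valid; the paper's ray-by-ray argument is slightly more economical in that it avoids the spherical Poincar\'{e}-Wirtinger inequality entirely in Theorem \ref{th3} (the paper only needs that tool later, in Theorem \ref{th5}, to show the ``constant'' $c_{u}(\sigma )$ is independent of $\sigma $), whereas your radial/angular decomposition anticipates the strategy the paper uses in Theorem \ref{th8}, where $u$ is split as $u_{S}+(u-u_{S})$ to reach $q>p$. Your part (ii) and the pointwise limit argument match the paper's essentially verbatim. One small point to make explicit: reducing to $u\in \mathcal{C}^{\infty }$ requires not just $\nabla u_{n}\rightarrow \nabla u$ in $(L_{s}^{p})^{N}$ and $c_{u_{n}}\rightarrow c_{u}$, but the observation that $u_{n}-c_{u_{n}}$ is then Cauchy in $L_{s+1}^{p}$ and that its limit is identified with $u-c_{u}$ through convergence in $L_{loc}^{1}$; this is how one concludes $u\in L_{s+1}^{p}$ for general $u$, and it is the limiting argument the paper spells out.
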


\begin{proof}
Suppose first that $u\in \mathcal{C}^{\infty }$ and let $u=u(r,\sigma )$
with $r\geq 0$ and $\sigma \in \Bbb{S}^{N-1}.$ If $p\leq q<\infty ,$ it
follows from (\ref{6}) that 
\begin{multline*}
\int_{\rho }^{\infty }(1+r)^{-(s+1)q-N}r^{N-1}|u(r,\sigma )-u(\rho ,\sigma
)|^{q}dr \\
\leq C\left( \int_{\rho }^{\infty }(1+r)^{-sp-N}r^{N-1}|\partial
_{r}u(r,\sigma )|^{p}dr\right) ^{q/p},
\end{multline*}
for every $\sigma \in \Bbb{S}^{N-1},$ where $\partial _{r}u$ is the radial
derivative of $u.$ Since $|u(r,\sigma )|^{q}\leq 2^{q-1}[|u(r,\sigma
)-u(\rho ,\sigma )|^{q}+|u(\rho ,\sigma )|^{q}]$ and since $\int_{\rho
}^{\infty }(1+r)^{-(s+1)q-N}r^{N-1}dr<\infty $ (recall $s>-1$), we infer
that 
\begin{multline}
\int_{\rho }^{\infty }(1+r)^{-(s+1)q-N}r^{N-1}|u(r,\sigma )|^{q}dr
\label{10} \\
\leq C\left[ |u(\rho ,\sigma )|^{q}+\left( \int_{\rho }^{\infty
}(1+r)^{-sp-N}r^{N-1}|\partial _{r}u(r,\sigma )|^{p}dr\right) ^{q/p}\right]
\\
\leq C\left[ |u(\rho ,\sigma )|^{q}+\left( \int_{\rho }^{\infty
}(1+r)^{-sp-N}r^{N-1}|\nabla u(r,\sigma )|^{p}dr\right) ^{q/p}\right] .
\end{multline}
(i) If $q=p$ above, integration on $\Bbb{S}^{N-1}$ yields 
\begin{equation}
||u||_{L_{s+1}^{p}(\widetilde{B}_{\rho })}\leq C(||u(\rho ,\cdot )||_{p,\Bbb{
S}^{N-1}}+||\,|\nabla u|\,||_{L_{s}^{p}(\widetilde{B}_{\rho })}),  \label{11}
\end{equation}
when $u\in \mathcal{C}^{\infty }.$ Suppose now that $u\in \mathcal{D}
^{\prime }$ and that $\nabla u\in (L_{s}^{p})^{N}.$ Let $\theta _{n}$ denote
a mollifying sequence and set $u_{n}:=\theta _{n}*u.$ By Remark \ref{rm1}
and Lemma \ref{lm1} (iii), $\nabla u_{n}=\theta _{n}*\nabla u\rightarrow
\nabla u$ in $(L_{s}^{p})^{N}.$ In particular, $\nabla u_{n}\rightarrow
\nabla u$ in $(L_{s}^{p}(\widetilde{B}_{\rho }))^{N}.$ On the other hand,
since $\nabla u\in (L_{s}^{p})^{N}\subset (L_{loc}^{p})^{N},$ then $u\in
W_{loc}^{1,p}$ (\cite[p. 21]{Ma11}). Thus, $u_{n}\rightarrow u$ in $%
W_{loc}^{1,p}$ which, by the continuity of the trace (even when $p=1;$ see 
\cite[p. 164]{AdFo03}) implies $u_{n}(\rho ,\cdot )\rightarrow u(\rho ,\cdot
)$ in $L^{p}(\Bbb{S}^{N-1}).$ As a result, 
\begin{equation*}
\lim_{n,m\rightarrow \infty }||u_{n}(\rho ,\cdot )-u_{m}(\rho ,\cdot )||_{p,%
\Bbb{S}^{N-1}}+||\,|\nabla (u_{n}-u_{m})|\,||_{L_{s}^{p}(\widetilde{B}_{\rho
})}=0
\end{equation*}
and so, by (\ref{11}), $u_{n}$ is a Cauchy sequence in $L_{s+1}^{p}(%
\widetilde{B}_{\rho }).$ Call $v$ its limit, so that $u_{n}\rightarrow v$ in 
$L_{loc}^{1}(\widetilde{B}_{\rho }).$ Since also $u_{n}\rightarrow u$ in $%
W_{loc}^{1,p}\hookrightarrow L_{loc}^{1},$ it follows that $u=v\in
L_{s+1}^{p}(\widetilde{B}_{\rho })$ and (\ref{11}) holds. The remark that $%
L_{s+1}^{p}(B_{\rho })=L^{p}(B_{\rho })$ now yields $u\in L_{s+1}^{p}.$

It remains to prove (\ref{8}). Upon replacing $u$ with $u-c_{u},$ it is not
restrictive to assume $\int_{B_{\rho }}u=0,$ so that $c_{u}=0.$ By the
continuity of the trace, $||u(\rho ,\cdot )||_{p,\Bbb{S}^{N-1}}\leq
C||u||_{1,p,B_{\rho }}$ and, by the Poincar\'{e}-Wirtinger inequality, the
seminorm $||\,|\nabla u|\,||_{p,B_{\rho }}$ is equivalent to the norm $%
||u||_{1,p,B_{\rho }}$ on the subspace of functions of $W^{1,p}(B_{\rho })$
with zero mean. Thus, $||u(\rho ,\cdot )||_{p,\Bbb{S}^{N-1}}\leq
C||\,|\nabla u|\,||_{p,B_{\rho }}$ and $||u||_{p,B_{\rho }}\leq C||\,|\nabla
u|\,||_{p,B_{\rho }}.$ Since $(1+|x|)^{t}$ is bounded above and below on $%
B_{\rho }$ for every $t,$ it follows that 
\begin{equation*}
||u||_{L_{s+1}^{p}(B_{\rho })}\leq C||\,|\nabla u|\,||_{L_{s}^{p}}\text{ and 
}||u||_{L_{s+1}^{p}(\widetilde{B}_{\rho })}\leq C||\,|\nabla
u|\,||_{L_{s}^{p}},
\end{equation*}
where (\ref{11}) was used to obtain the second inequality. This proves (\ref
{8}) when $\int_{B_{\rho }}u=0$ and, hence, in general.

(ii) Suppose $q\in I_{1,p}$ and either $N=1$ or $u$ is radially symmetric.
We only discuss the latter case since it will be clear that the former can
be handled similarly. Since $\nabla u\in (L_{s}^{p})^{N}$ implies that $u\in
W_{loc}^{1,p}$ is a function, there is no need to introduce a distribution
definition of radial symmetry.

If $q<\infty ,$ the proof of (\ref{9}) proceeds as in (i), with only minor
modifications. When $u\in \mathcal{C}^{\infty },$ (\ref{10}) is still valid
but, since now $u$ is radially symmetric, both $u$ and $\partial _{r}u$
depend only upon $r$ and the inequality becomes 
\begin{multline*}
\int_{\rho }^{\infty }(1+r)^{-(s+1)q-N}r^{N-1}|u(r)|^{q}dr \\
\leq C\left[ |u(\rho )|^{q}+\left( \int_{\rho }^{\infty
}(1+r)^{-sp-N}r^{N-1}|\partial _{r}u(r)|^{p}dr\right) ^{q/p}\right] .
\end{multline*}
Up to a factor independent of $u,$ the integrals $\int_{\rho }^{\infty
}(1+r)^{-(s+1)q-N}r^{N-1}|u(r)|^{q}dr$ and $\int_{\rho }^{\infty
}(1+r)^{-sp-N}r^{N-1}|\partial _{r}u(r)|^{p}dr$ are $||u||_{L_{s+1}^{q}(%
\widetilde{B}_{\rho })}^{q}$ and $||\partial _{r}u||_{L_{s}^{p}(\widetilde{B}
_{\rho })}^{p},$ respectively. Hence, 
\begin{equation*}
||u||_{L_{s+1}^{q}(\widetilde{B}_{\rho })}\leq C(|u(\rho )|+||\,|\nabla
u|\,||_{L_{s}^{p}(\widetilde{B}_{\rho })}).
\end{equation*}
Up to another factor independent of $u,$ the number $|u(\rho )|$ is the $%
L^{p}(\Bbb{S}^{N-1})$ norm of the constant function $u(\rho ).$ Therefore,
since $W^{1,p}(B_{\rho })\hookrightarrow L^{q}(B_{\rho })$ for $q\in I_{1,p}$
and since the radial symmetry is preserved in approximations $u_{n}=\theta
_{n}*u$ by simply choosing radially symmetric mollifiers, the proof can be
completed exactly as before.

If $p>N$ or $p=N=1,$ then $\infty \in I_{1,p}$ and the proof when $q=\infty $
is similar: Just use (\ref{7}) with $R=\rho $ instead of (\ref{6}) to get $%
||u||_{L_{s+1}^{\infty }(\widetilde{B}_{\rho })}\leq C(|u(\rho
)|+||\,|\nabla u|\,||_{L_{s}^{p}(\widetilde{B}_{\rho })}).$ (The
approximation by mollification is only used in $L_{s}^{p}$ with $p<\infty .$)

To see that, in addition, $\lim_{|x|\rightarrow \infty }|x|^{-(s+1)}u(x)=0,$
observe first that, by radial symmetry, $u(x)=f_{u}(|x|)$ with $f_{u}$
locally absolutely continuous on $(0,\infty )$ and $f_{u}^{\prime
}(|x|)=\partial _{r}u(x)$ (for more details, see the proof of Lemma \ref{lm6}
later), so that, by (\ref{7}), 
\begin{multline*}
\sup_{r\geq R}(1+r)^{-(s+1)}|f_{u}(r)-f_{u}(R)| \\
\leq C\left( \int_{R}^{\infty }(1+r)^{-sp-N}r^{N-1}|f_{u}^{\prime
}(r)|^{p}dr\right) ^{1/p}\leq C||\,|\nabla u|\,||_{L_{s}^{p}(\widetilde{B}%
_{R})},
\end{multline*}
where $C>0$ is also independent of $R.$ Thus, if $r\geq R,$%
\begin{equation*}
(1+r)^{-(s+1)}|f_{u}(r)|\leq (1+r)^{-(s+1)}|f_{u}(R)|+C||\,|\nabla
u|\,||_{L_{s}^{p}(\widetilde{B}_{R})}.
\end{equation*}
Given $\varepsilon >0,$ choose $R$ large enough that $C||\,|\nabla
u|\,||_{L_{s}^{p}(\widetilde{B}_{R})}\leq \varepsilon .$ Since $s>-1,$ it
follows that $\lim \sup_{r\rightarrow \infty }(1+r)^{-(s+1)}|f_{u}(r)|\leq
\varepsilon $ and so $\lim_{r\rightarrow \infty }(1+r)^{-(s+1)}f_{u}(r)=0.$
Equivalently, $\lim_{r\rightarrow \infty }r^{-(s+1)}f_{u}(r)=0,$ whence $%
\lim_{|x|\rightarrow \infty }|x|^{-(s+1)}u(x)=0.$
\end{proof}

The choice $c_{u}=$ $|B_{\rho }|^{-1}\int_{B\rho }u$ in (\ref{8})\ and (\ref
{9}) is not the only possible one. By translation, we may choose $%
c_{u}=|B_{\rho }|^{-1}\int_{B(x_{0},\rho )}u$ where $x_{0}\in \Bbb{R}^{N}$
is arbitrary but independent of $u.$ The constant $C$ depends upon $\rho $
and $x_{0},$ but it does not necessarily blow up as $\rho \rightarrow 0:$

\begin{remark}
\label{rm2}If $p>N$ or if $N=1$ (and in no other case), the obvious variants
of the inequalities (\ref{6}) and (\ref{7}) continue to hold on $[0,\infty )$
(\cite[Theorem 5.9, p. 63]{OpKu90}) and minor modifications of the proof of
Theorem \ref{th3} yield $||u-u(0)||_{L_{s+1}^{p}}\leq C||\,|\nabla
u|\,||_{L_{s}^{p}}$ for every $u\in \mathcal{D}^{\prime }$ such that $\nabla
u\in (L_{s}^{p})^{N}$ as well as $||u-u(0)||_{L_{s+1}^{q}}\leq C||\,|\nabla
u|\,||_{L_{s}^{p}}$ if $N=1$ or if $u$ is radially symmetric and $q\in
I_{1,p}=[p,\infty ].$ Naturally, $u(0)$ may also be replaced with $u(x_{0})$
where $x_{0}$ is independent of $u.$
\end{remark}

\section{Preliminary first order inequalities when $s<-1$\label{case2}}

We begin with a different version of Lemma \ref{lm2}, a special case of 
\cite[Theorem 6.2, p. 65]{OpKu90} that can also be found in \cite[Theorem 2]
{Br78} or \cite[p. 40 ff]{Ma11}. However, the proof is sketched to show how
the restriction $q\in I_{1,p}$ (not needed in Lemma \ref{lm2}) arises.

\begin{lemma}
\label{lm4}Suppose that $s<-1$ and that $1\leq p<\infty .$ \newline
(i) For every finite $q\in I_{1,p}$ (see (\ref{4})), there is a constant $C>0
$ such that 
\begin{multline}
\left( \int_{0}^{\infty }(1+r)^{-(s+1)q-N}r^{N-1}|f(r)|^{q}dr\right) ^{1/q}
\label{12} \\
\leq C\left( \int_{0}^{\infty }(1+r)^{-sp-N}r^{N-1}|f^{\prime
}(r)|^{p}dr\right) ^{1/p},
\end{multline}
for every locally absolutely continuous function $f$ on $(0,\infty )$ such
that $\lim_{r\rightarrow \infty }f(r)=0.$\newline
(ii) If $p>N$ (so that $\infty \in I_{1,p}$), there is a constant $C>0$ such
that 
\begin{equation}
\sup_{r\geq R}(1+r)^{-(s+1)}|f(r)|\leq C\left( \int_{R}^{\infty %
}(1+r)^{-sp-N}r^{N-1}|f^{\prime }(r)|^{p}dr\right) ^{1/p},  \label{13}
\end{equation}
for every $R\geq 0$ and every locally absolutely continuous function $f$ on $%
(0,\infty )$ such that $\lim_{r\rightarrow \infty }f(r)=0.$
\end{lemma}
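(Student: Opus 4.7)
The plan is to recognize both inequalities as one-dimensional weighted Hardy inequalities on $(0,\infty)$ and to reduce them to a single Bradley--Muckenhoupt-type criterion. Set $u(r):=(1+r)^{-(s+1)q-N}r^{N-1}$ and $v(r):=(1+r)^{-sp-N}r^{N-1}$. Since $\lim_{r\to\infty}f(r)=0$, one has $f(r)=-\int_r^\infty f'(t)\,dt$, and both (\ref{12}) and (\ref{13}) take the form
\[
\|f\|_{L^q((0,\infty);u\,dr)}\le C\,\|f'\|_{L^p((0,\infty);v\,dr)}.
\]
For part (i), with $1\le p\le q<\infty$ and $f(\infty)=0$, I would invoke the standard Bradley characterization (with the suitable $p=1$ modification involving an essential supremum): the inequality holds iff
\[
A:=\sup_{R>0}\Bigl(\int_0^R u(r)\,dr\Bigr)^{1/q}\Bigl(\int_R^\infty v(r)^{1-p'}\,dr\Bigr)^{1/p'}<\infty.
\]
The whole content of the lemma then reduces to verifying $A<\infty$ and tracing how $q\in I_{1,p}$ emerges.

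I would analyze $A$ in two regimes. For $R\ge 1$, I replace $1+r$ by $r$ in both integrands; the assumption $s<-1$ then makes $\int_0^R u\asymp R^{-(s+1)q}$ finite and $\int_R^\infty v^{1-p'}\asymp R^{(s+1)p'}$ convergent, and the two powers combine in $A$ to give a bounded contribution. This is precisely where the hypothesis $s<-1$ is essential: at $s=-1$ one of the integrals diverges. For $R\le 1$, the tails near infinity are uniformly bounded and only the behavior near zero matters; one finds $\int_0^R u\asymp R^N$, while $\int_R^\infty v^{1-p'}$ is bounded if $N<p$, of order $\log(1/R)$ if $N=p>1$, and of order $R^{(p-N)/(p-1)}$ if $N>p$. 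Tracking the product then shows $A<\infty$ precisely when $q\le Np/(N-p)$ in the case $N>p$, when $q<\infty$ in the case $N=p>1$, and unconditionally in the case $N<p$ --- exactly the finite values of $q\in I_{1,p}$.

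For part (ii) ($q=\infty$, $p>N$), I would avoid the abstract criterion and argue directly. Bounding $|f(r)|\le\int_r^\infty|f'|\,dt$ and applying H\"older with weight $v$ (noting $v^{-p'/p}=v^{1-p'}$) yields
\[
|f(r)|\le\Bigl(\int_r^\infty|f'|^p v\,dt\Bigr)^{1/p}\Bigl(\int_r^\infty v^{1-p'}\,dt\Bigr)^{1/p'}.
\]
The kernel $(1+r)^{-(s+1)}\bigl(\int_r^\infty v^{1-p'}\bigr)^{1/p'}$ is bounded uniformly in $r\ge 0$ by the same two-regime analysis: the large-$r$ cancellation uses $s<-1$, while finiteness at $r=0$ requires $\int_0^\infty v^{1-p'}<\infty$, which by the small-$r$ calculation above is exactly $p>N$. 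Taking the supremum over $r\ge R$ gives (\ref{13}). The borderline case $p=N=1$ is handled even more directly: $(1+r)^{-(s+1)}$ is increasing (as $s<-1$), so $(1+r)^{-(s+1)}|f(r)|\le\int_r^\infty(1+t)^{-(s+1)}|f'(t)|\,dt$ at once.

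The main obstacle, as the statement itself emphasizes, is not the abstract existence of a Hardy inequality but the two-regime bookkeeping of $A$: the regime $R\to\infty$ is where $s<-1$ is decisive (and would fail at $s=-1$), while the regime $R\to 0$ is what produces the Sobolev exponent $Np/(N-p)$ and thereby pins down the admissible $q\in I_{1,p}$.
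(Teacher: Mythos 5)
Your proposal is correct and follows essentially the same route as the paper: both reduce (\ref{12})--(\ref{13}) to the Bradley/Opic--Kufner criterion $\sup_{\xi>0}A(\xi)B(\xi)<\infty$ for the Hardy operator $f\mapsto\int_r^\infty f'$ and verify boundedness in the two regimes $\xi\to\infty$ (where $s<-1$ is decisive) and $\xi\to 0$ (where the restriction $q\in I_{1,p}$ emerges), with matching asymptotics. Your direct H\"older argument for part (ii) and your handling of $R>0$ by a uniform kernel bound (rather than the paper's replacement of $f$ by $f_R$) are only cosmetic variations.
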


\begin{proof}
From \cite[Theorem 6.2, p. 65]{OpKu90}, (\ref{12}) and (\ref{13}) with $R=0$
hold when $q\geq p$ if and only if $\sup_{\xi >0}A(\xi )B(\xi )<\infty ,$
where $A(\xi ):=||(1+r)^{-(s+1)-N/q}r^{(N-1)/q}||_{q,(0,\xi )}$ and $B(\xi
):=||(1+r)^{s+N/p}r^{(1-N)/p}||_{p^{\prime },(\xi ,\infty )}.$ Thus,
everything boils down to showing that $A(\xi )B(\xi )$ is bounded when $\xi
\rightarrow 0$ and when $\xi \rightarrow \infty .$ Note that $B(\xi )<\infty 
$ since $s<-1.$

If $\xi >0$ is small, a routine verification shows that $A(\xi )=O(\xi
^{N/q})$ and that $B(\xi )=O(1)$ if $p>N,B(\xi )=O(|\log \xi |^{(N-1)/N})$
if $p=N$ and $B(\xi )=O(\xi ^{1-N/p})$ if $p<N.$ Thus, $A(\xi )B(\xi )$ is
bounded near $0$ if $p>N$ or $p=N=1,$ or if $p=N$ and $q<\infty ,$ or if $%
p<N $ and $N/q+1-N/p\geq 0,$ i.e., $q\leq p^{*}.$ In other words, $A(\xi
)B(\xi ) $ is bounded near the origin if and only if $q\in I_{1,p}.$ For
large $\xi ,$ $A(\xi )=O(\xi ^{-(s+1)})$ and $B(\xi )=O(\xi ^{s+1}),$ so
that $A(\xi )B(\xi )$ is always bounded.

In (\ref{13}), the result when $R>0$ follows from the case when $R=0$ with $%
f $ replaced with $f_{R}=f(R)$ on $[0,R)$ and $f_{R}=f$ on $[R,\infty ),$ so
that $f_{R}^{\prime }=0$ on $[0,R)$ and $f_{R}^{\prime }=f^{\prime }$ on $%
(R,\infty ).$
\end{proof}

\begin{theorem}
\label{th5}Suppose $N>1,$ $s<-1$ and $1\leq p<\infty .$ If $u\in \mathcal{D}%
^{\prime }$ and $\nabla u\in (L_{s}^{p})^{N},$ then:\newline
(i) There is a unique constant $c_{u}\in \Bbb{R}$ such that $u-c_{u}\in
L_{s+1}^{p}$ and there is a constant $C=C(s,p)>0$ independent of $u$ such
that 
\begin{equation}
||u-c_{u}||_{L_{s+1}^{p}}\leq C||\,|\nabla u|\,||_{L_{s}^{p}}.  \label{14}
\end{equation}
\newline
(ii) If also $u$ is radially symmetric, then for every $q\in I_{1,p},$ $c_{u}
$ in (i) is the unique constant such that $u-c_{u}\in L_{s+1}^{q}$ and there
is a constant $C=C(s,p,q)>0$ independent of $u$ such that 
\begin{equation}
||u-c_{u}||_{L_{s+1}^{q}}\leq C||\,|\nabla u|\,||_{L_{s}^{p}}.  \label{15}
\end{equation}
Furthermore, if $p>N$ (so that $\infty \in I_{1,p}$), $\lim_{|x|\rightarrow 
\infty }|x|^{-(s+1)}(u(x)-c_{u})=0.$
\end{theorem}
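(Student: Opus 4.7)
My plan is to adapt the proof of Theorem \ref{th3} by replacing Lemma \ref{lm2} with Lemma \ref{lm4}, while accommodating two new features forced by $s<-1$: first, Lemma \ref{lm4} requires vanishing at infinity, so $c_{u}$ must be a limit at infinity rather than a ball mean; second, the weight $(1+|x|)^{-(s+1)q-N}$ is non-integrable at infinity for every $q$, so no nonzero constant belongs to $L_{s+1}^{q}$. This second fact makes uniqueness of $c_{u}$ immediate: if two constants worked, their difference would be a nonzero constant lying in $L_{s+1}^{q}$.

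I would treat the radial case (ii) first. After reducing to smooth $u=f(|x|)$ via mollification with radial mollifiers (Lemma \ref{lm1}), I would show by a H\"{o}lder argument that $f$ has a finite limit at infinity: writing $f(R_{2})-f(R_{1})=\int_{R_{1}}^{R_{2}}f'(t)\,dt$ and splitting $|f'|=[|f'|(1+t)^{-s-N/p}t^{(N-1)/p}]\cdot[(1+t)^{s+N/p}t^{-(N-1)/p}]$ before applying H\"{o}lder, the dual factor integrates to $\int^{\infty}(1+t)^{(s+1)p'-1}\,dt$, which converges precisely because $s<-1$. Define $c_{u}:=\lim_{r\to\infty}f(r)$. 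For finite $q\in I_{1,p}$, Lemma \ref{lm4}(i) applied to $f-c_{u}$ yields (\ref{15}); for $q=\infty$ when $p>N$, Lemma \ref{lm4}(ii) with $R=0$ does the same. The pointwise limit at infinity then follows by using Lemma \ref{lm4}(ii) on $[R,\infty)$ and letting $R\to\infty$, exactly as in the last part of the proof of Theorem \ref{th3}.

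For (i), I would decompose $u=\bar{u}+(u-\bar{u})$ after reducing to smooth $u$, where $\bar{u}(r):=|\Bbb{S}^{N-1}|^{-1}\int_{\Bbb{S}^{N-1}}u(r,\sigma)\,d\sigma$ is the spherical average. Jensen's inequality gives $\nabla\bar{u}\in (L_{s}^{p})^{N}$ with norm controlled by that of $\nabla u$, so (ii) applied to $\bar{u}$ produces a unique $c_{u}$ with $\|\bar{u}-c_{u}\|_{L_{s+1}^{p}}\le C\|\,|\nabla u|\,\|_{L_{s}^{p}}$. The zero-mean part $u-\bar{u}$ is controlled by the Poincar\'{e}--Wirtinger inequality on the sphere---valid because $N>1$---together with $|\nabla_{\sigma}u|\le r|\nabla u|$:
\begin{equation*}
\int_{\Bbb{S}^{N-1}}|u(r,\sigma)-\bar{u}(r)|^{p}\,d\sigma\le Cr^{p}\int_{\Bbb{S}^{N-1}}|\nabla u(r,\sigma)|^{p}\,d\sigma.
\end{equation*}
Multiplying by $(1+r)^{-(s+1)p-N}r^{N-1}$ and integrating over $[\rho,\infty)$ produces $\|u-\bar{u}\|_{L_{s+1}^{p}(\widetilde{B}_{\rho})}\le C\|\,|\nabla u|\,\|_{L_{s}^{p}}$, because $r^{p}(1+r)^{-(s+1)p-N}\le (1+r)^{-sp-N}$ when $r\ge\rho$. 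Adding these estimates yields (\ref{14}) on $\widetilde{B}_{\rho}$.

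The interior piece on $B_{\rho}$ is handled as follows: Poincar\'{e}--Wirtinger on $B_{2\rho}$ bounds $\|u-\bar{u}_{B_{2\rho}}\|_{L^{p}(B_{2\rho})}$ by $C\|\,|\nabla u|\,\|_{L_{s}^{p}}$, and the gap $|\bar{u}_{B_{2\rho}}-c_{u}|$ is bounded by restricting both this inequality and the exterior estimate to the annulus $A=B_{2\rho}\backslash B_{\rho}$, where $\|u-\bar{u}_{B_{2\rho}}\|_{L^{p}(A)}$ and $\|u-c_{u}\|_{L^{p}(A)}$ are both controlled by $\|\,|\nabla u|\,\|_{L_{s}^{p}}$ and a triangle inequality does the rest. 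The main obstacle is conceptual: $c_{u}$ is defined purely through behavior at infinity, so the essential content is showing that this distant datum is compatible with an interior estimate; the spherical-average reduction resolves this by turning the issue into a one-dimensional problem in which $c_{u}=\lim_{r\to\infty}\bar{u}(r)$ is manifestly compatible with any finite-$r$ averaging.
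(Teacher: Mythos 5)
Your proposal is correct. Part (ii) and the uniqueness argument follow essentially the paper's own route (ray-wise absolute continuity, the H\"{o}lder/Cauchy argument giving $c_{u}=\lim_{r\rightarrow \infty }f(r),$ then Lemma \ref{lm4}); the genuine divergence is in part (i). The paper works ray by ray: it sets $c_{u}(\sigma ):=u(r,\sigma )-\int_{\infty }^{r}\partial _{r}u(t,\sigma )dt,$ proves $||u(r,\cdot )-c_{u}||_{p,\Bbb{S}^{N-1}}\rightarrow 0,$ and then must show that $c_{u}(\sigma )$ is constant, which it does via the Poincar\'{e}-Wirtinger inequality on $\Bbb{S}^{N-1}$ along a sequence $r_{n}\rightarrow \infty $ plus a mollification argument. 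You instead split $u=\bar{u}+(u-\bar{u}),$ reduce $\bar{u}$ to the radial case -- this step is exactly Lemma \ref{lm6}, which the paper proves later but independently of Theorem \ref{th5}, so there is no circularity, though you should justify $\nabla \bar{u}\in (L_{s}^{p})^{N}$ with that lemma's care (identification of the distributional gradient of $\bar{u}$) rather than a bare appeal to Jensen -- and control the zero-spherical-mean part by integrating the spherical Poincar\'{e}-Wirtinger inequality against the shifted weight. The crux is your inequality $r^{p}(1+r)^{-(s+1)p-N}\leq (1+r)^{-sp-N},$ which is right: the factor $r^{p}$ produced by the intrinsic spherical gradient is exactly absorbed by passing from weight index $s$ to $s+1.$ This is in effect the strategy the paper deploys only later, in Theorem \ref{th8}, where $u$ is split as $u_{S}+(u-u_{S})$ and the zero-symmetrization part is handled by Lemma \ref{lm7}; your version buys a proof of (i) that never needs to establish the angular constancy of a $\sigma $-dependent limit, while the paper's route yields (\ref{20}) and (\ref{17}) as by-products that it reuses afterwards. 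Both arrive at the same $c_{u}=\lim_{r\rightarrow \infty }\bar{u}(r),$ and your interior patch on $B_{\rho }$ via the annulus comparison is sound.
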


\begin{proof}
The uniqueness of $c_{u}$ is obvious since $L_{s+1}^{q}$ contains no nonzero
constant when $s<-1$ and $1\leq q\leq \infty .$ We focus on the existence
part. Some preliminary properties must be established to prove parts (i) and
(ii) of the theorem.

It is well-known that if $u\in W_{loc}^{1,1},$ then $u$ is locally
absolutely continuous on almost every line parallel to the coordinate axes $%
x_{i}$ and that, on such lines, the classical and weak derivatives $\partial
_{i}u$ coincide. Together with the local equivalence of the measures $dr$
and $r^{N-1}dr$ away from the origin, this implies that, when passing to
spherical coordinates, $u(\cdot ,\sigma )$ is locally absolutely continuous
on $(0,\infty )$ (but not necessarily on $[0,\infty )$) for a.e. $\sigma \in 
\Bbb{S}^{N-1},$ with classical radial derivative $\partial _{r}u(r,\sigma
)=\nabla u(r,\sigma )\cdot \sigma .$ In particular, this holds if $\nabla
u\in (L_{s}^{p})^{N}.$

From now on, we assume $s<-1$ and $\nabla u\in (L_{s}^{p})^{N},$ so that $%
(1+|x|)^{-s-N/p}\partial _{r}u\in L^{p}.$ By Fubini's theorem in spherical
coordinates, $(1+r)^{-s-N/p}r^{(N-1)/p}\partial _{r}u(\cdot ,\sigma )\in
L^{p}(0,\infty )$ for a.e. $\sigma \in \Bbb{S}^{N-1}.$ Since $%
(1+r)^{s+N/p}r^{(1-N)/p}\in L^{p^{\prime }}(\varepsilon ,\infty )$ for every 
$\varepsilon >0$ when $s<-1,$ it follows that $\partial _{r}u(\cdot ,\sigma
)\in L^{1}(\varepsilon ,\infty ).$ Consequently, 
\begin{equation*}
v(r,\sigma ):=\int_{\infty }^{r}\partial _{r}u(t,\sigma )dt,
\end{equation*}
is a.e. defined and measurable on $(0,\infty )\times \Bbb{S}^{N-1}.$ For
a.e. $\sigma \in \Bbb{S}^{N-1},$ the function $v(\cdot ,\sigma )$ is locally
absolutely continuous and a.e. differentiable on $(0,\infty )$ with $%
\partial _{r}v(\cdot ,\sigma )=\partial _{r}u(\cdot ,\sigma )$ and $%
\lim_{r\rightarrow \infty }v(r,\sigma )=0.$ In particular, 
\begin{equation}
c_{u}(\sigma ):=u(\cdot ,\sigma )-v(\cdot ,\sigma ),  \label{16}
\end{equation}
is a function independent of $r>0$ (difference of two locally absolutely
continuous functions with the same a.e. derivative).

Next, $v(r,\cdot )\in L^{p}(\Bbb{S}^{N-1})$ for every $r>0$ and $%
\lim_{r\rightarrow \infty }||v(r,\cdot )||_{p,\Bbb{S}^{N-1}}=0.$ To see
this, use the estimate 
\begin{equation*}
|v(r,\sigma )|\leq \lambda (r)\left( \int_{r}^{\infty
}(1+t)^{-sp-N}t^{N-1}|\partial _{r}u(t,\sigma )|^{p}\right) ^{1/p},
\end{equation*}
where $\lambda (r):=||(1+t)^{s+N/p}t^{(1-N)/p}||_{p^{\prime },(r,\infty
)}\rightarrow 0$ when $r\rightarrow \infty .$ By taking $p^{th}$ powers and
integrating on $\Bbb{S}^{N-1},$ we get $||v(r,\cdot )||_{p,\Bbb{S}%
^{N-1}}\leq \lambda (r)^{1/p^{\prime }}||\,|\nabla
u|\,||_{L_{s}^{p}}\rightarrow 0$ when $r\rightarrow \infty ,$ as claimed.
Thus, by (\ref{16}), 
\begin{equation}
\lim_{r\rightarrow \infty }||u(r,\cdot )-c_{u}||_{p,\Bbb{S}^{N-1}}=0.
\label{17}
\end{equation}

The next step is to show that $c_{u}$ is actually constant. (When $1<p<N$
and $\nabla u\in (L^{p})^{N},$ i.e., $s=-N/p,$ this goes back to Uspenskii 
\cite{Us61}; see also Fefferman \cite{Fe74}.) We shall use the
Poincar\'{e}-Wirtinger inequality on the sphere $\Bbb{S}^{N-1}:$ If $N>1,$ 
\begin{equation}
||w-\overline{w}||_{p,\Bbb{S}^{N-1}}\leq C||\nabla _{\Bbb{S}^{N-1}}w||_{p,%
\Bbb{S}^{N-1}},  \label{18}
\end{equation}
for every $w\in W^{1,p}(\Bbb{S}^{N-1}),$ where $\nabla _{\Bbb{S}^{N-1}}$ is
the gradient of $w$ for the natural Riemannian structure of the unit sphere, 
$C>0$ is a constant independent of $w$ and $\overline{w}$ is the average of $%
w$ on $\Bbb{S}^{N-1}.$ In the literature, the Poincar\'{e}-Wirtinger
inequality on compact manifolds is mostly quoted when $p=2$ (Osserman \cite
{Os78}), but an elementary proof for arbitrary $p$ follows, by
contradiction, from the connectedness of $\Bbb{S}^{N-1}$ and the compactness
of the embedding $W^{1,p}(\Bbb{S}^{N-1})\hookrightarrow L^{p}(\Bbb{S}
^{N-1}). $

Assume $u\in \mathcal{C}^{\infty }$ (in addition to $\nabla u\in
(L_{s}^{p})^{N}$). When $r>0$ is fixed, $\nabla _{\Bbb{S}^{N-1}}u(r,\sigma )$
is the orthogonal projection of $\nabla u(r,\sigma )$ on the tangent space $%
\{\sigma \}^{\bot }$ of $\Bbb{S}^{N-1}$ at $\sigma ,$ whence $|\nabla _{\Bbb{%
S}^{N-1}}u(r,\sigma )|\leq |\nabla u(r,\sigma )|.$ Thus, by (\ref{18}), $%
||u(r,\cdot )-\overline{u}(r)||_{p,\Bbb{S}^{N-1}}\leq C||\,|\nabla u(r,\cdot
)|\,||_{p,\Bbb{S}^{N-1}}$ where $\overline{u}(r)$ is the average of $%
u(r,\cdot )$ on $\Bbb{S}^{N-1}$ and so 
\begin{multline*}
\int_{0}^{\infty }(1+r)^{-sp-N}r^{N-1}||u(r,\cdot )-\overline{u}(r)||_{p,%
\Bbb{S}^{N-1}}^{p}dr \\
\leq C\int_{0}^{\infty }(1+r)^{-sp-N}r^{N-1}||\,|\nabla u(r,\cdot )|\,||_{p,%
\Bbb{S}^{N-1}}^{p}dr=C||\,|\nabla u|\,||_{L_{s}^{p}}^{p}.
\end{multline*}
Since the left-hand side is finite, there is a sequence $r_{n}\rightarrow
\infty $ such that $\lim (1+r_{n})^{-sp-N}r_{n}^{N-1}||u(r_{n},\cdot )-%
\overline{u}(r_{n})||_{p,\Bbb{S}^{N-1}}=0,$ which in turn implies $\lim
||u(r_{n},\cdot )-\overline{u}(r_{n})||_{p,\Bbb{S}^{N-1}}=0$ because $\lim
(1+r_{n})^{-sp-N}r_{n}^{N-1}=\infty $ when $s<-1.$ Together with $\lim
||u(r_{n},\cdot )-c_{u}||_{p,\Bbb{S}^{N-1}}=0$ from (\ref{17}), this yields $
\lim ||\overline{u}(r_{n})-c_{u}||_{p,\Bbb{S}^{N-1}}=0$ and, since $%
\overline{u}(r_{n})$ is independent of $\sigma ,$ it follows that $c_{u}$ is
constant (under the additional assumption $u\in \mathcal{C}^{\infty }$ at
this point).

We are now in a position to prove (i) and (ii) of the theorem.

(i) Recall that $\lim_{r\rightarrow \infty }v(r,\sigma )=0$ for a.e. $\sigma
\in \Bbb{S}^{N-1}.$ Since $v(r,\sigma )=u(r,\sigma )-c_{u}(\sigma ),$ the
choice $f(r)=u(r,\sigma )-c_{u}(\sigma )$ and $q=p$ in (\ref{12}) yields 
\begin{multline*}
\int_{0}^{\infty }(1+r)^{-(s+1)p-N}r^{N-1}|u(r,\sigma )-c_{u}(\sigma )|^{p}dr
\\
\leq C\int_{0}^{\infty }(1+r)^{-sp-N}r^{N-1}|\partial _{r}u(r,\sigma
)|^{p}dr,
\end{multline*}
whence, by integration on $\Bbb{S}^{N-1},$%
\begin{equation}
||u-c_{u}||_{L_{s+1}^{p}}\leq C||\,|\nabla u|\,||_{L_{s}^{p}}.  \label{19}
\end{equation}
Set $u_{n}:=\theta _{n}*u$ where $\theta _{n}$ is a mollifying sequence. By
Lemma \ref{lm1} for $\nabla u$ and Remark \ref{rm1}, it follows from (\ref
{19}) that $u_{n}-c_{u_{n}}$ is a Cauchy sequence in $L_{s+1}^{p}.$ Call $%
\widetilde{u}$ its limit. Then, $u_{n}-c_{u_{n}}\rightarrow \widetilde{u}$
in $L_{loc}^{1}$ and, since $u_{n}\rightarrow u$ in $L_{loc}^{1},$ we infer
that $c_{u_{n}}\rightarrow u-\widetilde{u}$ in $L_{loc}^{1}.$ From the
above, $c_{u_{n}}$ is constant because $u_{n}\in \mathcal{C}^{\infty }.$
Therefore, $u-\widetilde{u}$ is a constant $\widetilde{c}$ and $u-\widetilde{%
c}=\widetilde{u}\in L_{s+1}^{p}.$ Thus, by (\ref{19}), $c_{u}-\widetilde{c}%
\in L_{s+1}^{p}$ and, since neither $c_{u}$ nor $\widetilde{c}$ depends upon 
$r$ and $\int_{0}^{\infty }(1+r)^{-(s+1)p-N}r^{N-1}dr=\infty $ when $s<-1,$
this can only happen if $c_{u}=\widetilde{c}$ a.e. on $\Bbb{S}^{N-1}.$ This
shows that $c_{u}$ is constant and so (\ref{19}) is the inequality (\ref{14}
).

(ii) If $u$ is radially symmetric and $q\in I_{1,p}$ is finite, it follows
from (\ref{12}) with $f(r)=u(r)-c_{u}$ that 
\begin{multline*}
\left( \int_{0}^{\infty }(1+r)^{-(s+1)q-N}r^{N-1}|u(r)-c_{u}|^{q}dr\right)
^{1/q} \\
\leq C\left( \int_{0}^{\infty }(1+r)^{-sp-N}r^{N-1}|\partial
_{r}u(r)|^{p}dr\right) ^{1/p},
\end{multline*}
which, up to a constant factor independent of $u,$ is just the inequality (%
\ref{15}).

If $p>N,$ the same inequality when $q=\infty $ follows from (\ref{13}) with $
R=0$ instead of (\ref{12}). In addition, by (\ref{13}) with $R>0,$ we also
get $||u-c_{u}||_{L_{s+1}^{\infty }(\widetilde{B}_{R})}\leq C||\,|\nabla
u|\,||_{L_{s}^{p}(\widetilde{B}_{R})}$ with $C>0$ independent of $R$ and so $%
\lim_{R\rightarrow \infty }||u-c_{u}||_{L_{s+1}^{\infty }(\widetilde{B}%
_{R})}=\lim_{R\rightarrow \infty }||\,|\nabla u|\,||_{L_{s}^{p}(\widetilde{B}%
_{R})}=0.$ By the continuity of $u$ (recall $p>N$), this amounts to $%
\lim_{|x|\rightarrow \infty }|x|^{-(s+1)}(u(x)-c_{u})=0$.
\end{proof}

By (\ref{17}) and since $L^{p}(\Bbb{S}^{N-1})$ $\hookrightarrow L^{1}(\Bbb{S}%
^{N-1}),$ it follows that

\begin{equation}
c_{u}=\lim_{r\rightarrow \infty }(N\omega _{N})^{-1}\int_{\Bbb{S}%
^{N-1}}u(r,\sigma )d\sigma ,  \label{20}
\end{equation}
where $u(r,\cdot )$ is the trace of $u$ on $\partial B_{r}$ and $\omega _{N}$
is the measure of the unit ball of $\Bbb{R}^{N}.$ In particular, $c_{u}$ is
independent of $s<-1$ and $1\leq p<\infty $ such that $\nabla u\in
(L_{s}^{p})^{N}.$

\begin{remark}
\label{rm3}Although Theorem \ref{th5} is false when $N=1$ (and indeed (\ref
{18}) breaks down), it is readily checked that it remains true on $\Bbb{R}_{%
\pm }.$ Its failure on $\Bbb{R}$ is only due to the fact that the
restrictions of $u$ to $\Bbb{R}_{-}$ and $\Bbb{R}_{+}$ need not involve the
same constant $c_{u}.$
\end{remark}

\section{The general inequalities\label{general}}

In this section, Theorems \ref{th3} and \ref{th5} are complemented and
subsumed in a single statement (Theorem \ref{th8}). Next, the result is
generalized when $\nabla ^{k}u\in (L_{s}^{p})^{\nu (N,k)}$ for some integer $%
k\in \Bbb{N}$ (Theorem \ref{th9}).

Recall that $\omega _{N}$ is the measure of the unit ball of $\Bbb{R}^{N}$
and suppose $u\in L_{loc}^{p}$ with $1\leq p<\infty .$ By Fubini's theorem
in spherical coordinates, 
\begin{equation*}
f_{u}(t):=(N\omega _{N})^{-1}\int_{\Bbb{S}^{N-1}}u(t\sigma )d\sigma ,
\end{equation*}
is defined for a.e. $t>0$ and $f_{u}\in L_{loc}^{p}([0,\infty
),t^{N-1}dt)\subset L_{loc}^{p}(0,\infty ).$ The radial symmetrization $%
u_{S} $ of $u$ is the radially symmetric function 
\begin{equation*}
u_{S}(x):=f_{u}(|x|)=(N\omega _{N})^{-1}\int_{\Bbb{S}^{N-1}}u(|x|\sigma
)d\sigma .
\end{equation*}

\begin{lemma}
\label{lm6}If $u\in \mathcal{D}^{\prime }$ and $\nabla u\in (L_{s}^{p})^{N}$
with $s\in \Bbb{R}$ and $1\leq p<\infty ,$ then $\nabla u_{S}\in
(L_{s}^{p})^{N}$ and $||\,|\nabla u_{S}|\,||_{L_{s}^{p}}\leq ||\,|\nabla
u|\,||_{L_{s}^{p}}.$
\end{lemma}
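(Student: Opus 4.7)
The plan is to reduce the inequality to Jensen's inequality applied to the probability measure $(N\omega_{N})^{-1}d\sigma$ on $\Bbb{S}^{N-1}$, once the weak gradient of $u_{S}$ has been correctly identified. Because $\nabla u\in (L_{s}^{p})^{N}\subset (L_{loc}^{p})^{N}$, the standard fact \cite[p.~21]{Ma11} gives $u\in W_{loc}^{1,p}$, so $u$ is a function and $u_{S}(x)=f_{u}(|x|)$ with $f_{u}(r)=(N\omega_{N})^{-1}\int_{\Bbb{S}^{N-1}}u(r\sigma)\,d\sigma$ is well defined for a.e.\ $r>0$ by Fubini in spherical coordinates.

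My first step is to identify the weak derivative of the radial profile $f_{u}$, namely
\[
f_{u}'(r)=(N\omega_{N})^{-1}\int_{\Bbb{S}^{N-1}}\partial_{r}u(r\sigma)\,d\sigma\qquad\text{a.e.\ on }(0,\infty).
\]
I would verify this by testing $f_{u}$ against $\varphi\in\mathcal{C}_{0}^{\infty}(0,\infty)$ and combining Fubini in spherical coordinates with one-dimensional integration by parts on each line $\sigma=\mathrm{const}$, using the fact (already recalled in the proof of Theorem~\ref{th5}) that $u(\cdot,\sigma)$ is locally absolutely continuous on $(0,\infty)$ for a.e.\ $\sigma$. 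Radial symmetry of $u_{S}$ then yields $\nabla u_{S}(x)=f_{u}'(|x|)\,x/|x|$ weakly, and in particular $|\nabla u_{S}(x)|=|f_{u}'(|x|)|$ a.e.

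With this formula in hand, Jensen's inequality applied to $t\mapsto|t|^{p}$ gives
\[
|f_{u}'(r)|^{p}\leq (N\omega_{N})^{-1}\int_{\Bbb{S}^{N-1}}|\partial_{r}u(r\sigma)|^{p}\,d\sigma\leq (N\omega_{N})^{-1}\int_{\Bbb{S}^{N-1}}|\nabla u(r\sigma)|^{p}\,d\sigma.
\]
Multiplying by $N\omega_{N}(1+r)^{-sp-N}r^{N-1}$, integrating on $(0,\infty)$, and applying Fubini on the right produces the conclusion, since in spherical coordinates the left-hand side is exactly $\||\nabla u_{S}|\|_{L_{s}^{p}}^{p}$ and the right-hand side is $\||\nabla u|\|_{L_{s}^{p}}^{p}$.

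The only real obstacle is the weak-derivative identification of $f_{u}'$ and hence of $\nabla u_{S}$; everything else is Jensen's inequality. Should the direct test-function calculation feel delicate, an alternative would be to mollify $u$ with a radial $\theta_{n}\in\mathcal{C}_{0}^{\infty}$, where the identity is immediate from classical calculus for the smooth approximant $u_{n}=\theta_{n}*u$, and then pass to the limit using Lemma~\ref{lm1} applied to the weight $w(x)=(1+|x|)^{-sp-N}$, whose logarithm is uniformly continuous on $\Bbb{R}^{N}$.
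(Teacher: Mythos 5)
Your argument is correct and follows essentially the same route as the paper: identify $f_{u}'=f_{\partial _{r}u}$ by testing against radial test functions, obtain $|\nabla u_{S}(x)|=|f_{u}'(|x|)|$ from the chain rule for $f_{u}(|x|)$ (the paper justifies this step by citing Marcus--Mizel and, for $N>1$, uses $W^{1,p}(B_{R}\backslash \{0\})=W^{1,p}(B_{R})$ to pass from a distribution on $\Bbb{R}^{N}\backslash \{0\}$ to one on $\Bbb{R}^{N}$), and then conclude by Jensen/H\"{o}lder for the normalized surface measure together with Fubini in spherical coordinates. The only point you gloss over is that chain-rule/removable-singularity identification, which is exactly where the paper invokes its references, so no new idea is missing.
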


\begin{proof}
First, $u\in W_{loc}^{1,p}$ since $\nabla u\in (L_{loc}^{p})^{N}.$ We claim
that $u_{S}\in W_{loc}^{1,p},$ which is obvious if $N=1.$ If $N>1,$ then $%
W^{1,p}(B_{R})=W^{1,p}(B_{R}\backslash \{0\})$ (see for instance\textit{\ }%
\cite[p. 52]{HeKiMa93}) and it suffices to show that $u_{S}\in
W^{1,p}(B_{R}\backslash \{0\})$ for every $R>0.$ That $u_{S}\in
L^{p}(B_{R}\backslash \{0\})=L^{p}(B_{R})$ is clear from $u\in
W_{loc}^{1,p}. $ Since $\partial _{r}u=\nabla u\cdot |x|^{-1}x\in
L_{loc}^{p},$ the formal calculation 
\begin{equation}
\nabla u_{S}(x)=(N\omega _{N})^{-1}\left( \int_{\Bbb{S}^{N-1}}\partial
_{r}u(|x|\sigma )d\sigma \right) |x|^{-1}x,  \label{21}
\end{equation}
yields $\nabla u_{S}\in (L^{p}(B_{R})\backslash \{0\})^{N}(L^{p}(B_{R}))^{N}$
and so, as claimed, $u_{S}\in W^{1,p}(B_{R}\backslash \{0\}).$ This formula
is justified below when $\nabla u_{S}$ is understood as a distribution on $%
\Bbb{R}^{N}\backslash \{0\},$ but since $W^{1,p}(B_{R}\backslash
\{0\})=W^{1,p}(B_{R}),$ it also gives $\nabla u_{S}$ as a distribution on $%
\Bbb{R}^{N}.$

If $\varphi \in \mathcal{C}_{0}^{\infty }(0,\infty ),$ set $\psi
(x):=\varphi (|x|),$ so that $\psi \in \mathcal{C}_{0}^{\infty }(\Bbb{R}%
^{N}\backslash \{0\})$ and that $\partial _{r}\psi (x)=\varphi ^{\prime
}(|x|).$ Then, $\langle f_{u}^{\prime },\varphi \rangle =-(N\omega
_{N})^{-1}\left\langle u,|x|^{1-N}\partial _{r}\psi \right\rangle =(N\omega
_{N})^{-1}\left\langle |x|^{1-N}\partial _{r}u,\psi \right\rangle $ (use $%
\partial _{r}=|x|^{-1}x\cdot \nabla $ and $\nabla \cdot \left(
|x|^{-N}x\right) =0$). Since $\partial _{r}u\in L_{loc}^{p},$ this shows
that $\langle f_{u}^{\prime },\varphi \rangle =\langle f_{\partial
_{r}u},\varphi \rangle ,$ that is, $f_{u}^{\prime }=f_{\partial _{r}u}\in
L_{loc}^{p}(0,\infty )$ and so $f_{u}\in W_{loc}^{1,p}(0,\infty ).$ In
particular, $f_{u}$ is locally absolutely continuous on $(0,\infty ).$ As a
result, by Marcus and Mizel \cite[Theorem 4.3]{MaMi72}, $\nabla
u_{S}(x)=|x|^{-1}f_{u}^{\prime }(|x|)x=|x|^{-1}f_{\partial _{r}u}(|x|)x$ as
a distribution on $\Bbb{R}^{N}\backslash \{0\}$ and (\ref{21}) is proved.

To see that $\nabla u_{S}\in (L_{s}^{p})^{N},$ use (\ref{21}) and
H\"{o}lder's inequality to get $|\nabla u_{S}(x)|\leq (N\omega
_{N})^{-1}\int_{\Bbb{S}^{N-1}}|\nabla u(|x|\sigma )|d\sigma \leq (N\omega
_{N})^{-1/p}\left( \int_{\Bbb{S}^{N-1}}|\nabla u(|x|\sigma )|^{p}d\sigma
\right) ^{1/p}.$ Hence,\linebreak $(1+|x|)^{-sp-N}|\nabla u_{S}(x)|^{p}\leq
(N\omega _{N})^{-1}\int_{\Bbb{S}^{N-1}}(1+|x|)^{-sp-N}|\nabla u(|x|\sigma
)|^{p}d\sigma $ and so, by integration in spherical coordinates, $%
||\,|\nabla u_{S}|\,||_{L_{s}^{p}}\leq ||\,|\nabla u|\,||_{L_{s}^{p}}.$
\end{proof}

If $u\in \mathcal{D}^{\prime }$ and $\nabla u\in (L_{s}^{p})^{N},$ Lemma \ref
{lm6} yields $\nabla u_{S}\in (L_{s}^{p})^{N}$ and it then follows from
Theorem \ref{th3} (Theorem \ref{th5}) that $u_{S}\in L_{s+1}^{q}$ for every $%
q\in I_{1,p}$ if $s>-1$ ($u_{S}-c_{u_{S}}\in L_{s+1}^{q}$ for every $q\in
I_{1,p}$ and a unique constant $c_{u_{S}}$ if $s<-1$). Thus, to show that $%
u\in L_{s+1}^{q}$ or that $u-c_{u}\in L_{s+1}^{q}$ when $q>p,$ it suffices
to prove the same result for $u-u_{S}.$ The difference between $u$ and $%
u-u_{S}$ is that $(u-u_{S})_{S}=0$ and that, for functions with vanishing
radial symmetrization, a result originating in the work of Caffarelli, Kohn
and Nirenberg \cite{CaKoNi84} and generalized in \cite{Ra12} is applicable.
We only spell out the special case relevant to the issue of interest here
and give a proof of it when $q=\infty ,$ not considered elsewhere.

\begin{lemma}
\label{lm7}Suppose that $N>1.$ If $u\in L_{loc}^{1}$ and $u_{S}=0$ and if $%
|x|^{a/p}u\in L^{p}$ and $|x|^{1+a/p}\nabla u\in (L^{p})^{N}$ for some $a\in 
\Bbb{R}$ and $1\leq p<\infty ,$ there is a constant $C=C(a,p,q)>0$
independent of $u$ such that 
\begin{equation}
||\,|x|^{(a+N)/p-N/q}u||_{q}\leq C||\,|x|^{1+a/p}|\nabla u|\,||_{p},
\label{22}
\end{equation}
for every $q\in I_{1,p}$ (see (\ref{4})). Furthermore, if $p>N,$ then $%
\lim_{|x|\rightarrow \infty }|x|^{(a+N)/p}u(x)=0.$
\end{lemma}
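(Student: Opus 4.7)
My plan is to prove only the new case $q=\infty$ (which requires $p>N$), since the finite-$q$ case is the special instance of the results of \cite{CaKoNi84} and \cite{Ra12} already cited just before the statement. The key ingredients are (i) the zero-mean consequence of $u_{S}=0$ on every spherical annulus, and (ii) Morrey's inequality applied on such annuli with the correct scaling.

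First, since $u_{S}=0$ means $\int_{\Bbb{S}^{N-1}}u(\rho \sigma)\,d\sigma=0$ for a.e.\ $\rho>0$, Fubini in spherical coordinates gives $\int_{A_{r}}u\,dy=0$ for every $r>0$, where $A_{r}:=\{y:r/2<|y|<2r\}$. Next, because $p>N$, Morrey's inequality on the Lipschitz domain $A_{1}$, pulled back by the dilation $y\mapsto y/r$ (under which the H\"{o}lder constant is scale-invariant), yields
\begin{equation*}
|u(x)-u(y)|\leq C|x-y|^{1-N/p}\,\|\,|\nabla u|\,\|_{p,A_{r}}
\end{equation*}
for all $x,y\in A_{r}$, with $C>0$ independent of $r$. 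Averaging in $y$ over $A_{r}$ and using $\mathrm{diam}(A_{r})\leq 4r$ together with the vanishing mean,
\begin{equation*}
|u(x)|=|u(x)-\overline{u}_{A_{r}}|\leq Cr^{1-N/p}\|\,|\nabla u|\,\|_{p,A_{r}},\qquad x\in A_{r}.
\end{equation*}

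Multiplying by $|x|^{(a+N)/p}$ and using $|x|\sim r$ on $A_{r}$ gives
\begin{equation*}
|x|^{(a+N)/p}|u(x)|\leq Cr^{1+a/p}\|\,|\nabla u|\,\|_{p,A_{r}}.
\end{equation*}
Since $|y|^{p+a}\sim r^{p+a}$ on $A_{r}$, the right-hand side is bounded by
$C\bigl(\int_{A_{r}}|y|^{p+a}|\nabla u|^{p}\,dy\bigr)^{1/p}\leq C\|\,|y|^{1+a/p}|\nabla u|\,\|_{p}$. Taking $r:=|x|$ (so that $x\in A_{|x|}$) and then the supremum over $x\neq 0$ produces (\ref{22}) with $q=\infty$. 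The same estimate with $r=|x|$ is also bounded by $\bigl(\int_{|y|>|x|/2}|y|^{p+a}|\nabla u|^{p}\,dy\bigr)^{1/p}$, which tends to $0$ as $|x|\to\infty$ by absolute continuity of the integral; this yields the decay $\lim_{|x|\to\infty}|x|^{(a+N)/p}u(x)=0$.

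The main obstacle is to control $u$ in $L^{\infty}$ rather than merely $u-\overline{u}$, since Morrey's inequality directly controls only the latter; this is precisely why the hypothesis $u_{S}=0$ is indispensable, as it wipes out the mean on every annulus centered at the origin. A secondary, bookkeeping, obstacle is to verify that the arithmetic of the exponents $(a+N)/p$, $1-N/p$, and $(p+a)/p=1+a/p$ balances so that the weight appearing on $|\nabla u|$ in the target inequality matches the integrability hypothesis; this dictates the specific form of (\ref{22}).
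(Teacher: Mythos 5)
Your proposal is correct and follows essentially the same route as the paper: the finite-$q$ case is delegated to \cite{Ra12} (as the paper itself does), and for $q=\infty$ both arguments exploit the vanishing mean of $u$ on dyadic annuli coming from $u_{S}=0$, an $L^{\infty}$ Poincar\'{e}/Morrey estimate on a fixed reference annulus (the paper phrases it via $W^{1,p}\hookrightarrow L^{\infty}$ plus Poincar\'{e}--Wirtinger, you via the H\"{o}lder seminorm bound, which is the same estimate), scaling to general radii with a dilation-invariant constant, and the vanishing of the tail $\|\,|y|^{1+a/p}|\nabla u|\,\|_{p,\{|y|>R\}}$ for the decay at infinity. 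The exponent bookkeeping checks out, so no changes are needed.
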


\begin{proof}
If $q<\infty ,$ the result follows by letting $b=a+p$ and by substituting $%
q=p,r=q$ in part (ii) of \cite[Corollary 6.1]{Ra12}. The proof when $%
q=\infty $ (hence $p>N$) is given below.

For $\tau >0,$ let $\Omega _{\tau }:=\{x\in \Bbb{R}^{N}:\tau <|x|<2\tau \}.$
Since power weights are bounded above and below on $\Omega _{1},$ it follows
that $u\in W^{1,p}(\Omega _{1})\hookrightarrow L^{\infty }(\Omega _{1}).$
Thus, $||u||_{\infty ,\Omega _{1}}\leq C||u||_{1,p,\Omega _{1}}$ with a
constant $C>0$ independent of $u.$

The assumption $u_{S}=0$ entails $\int_{\Omega _{\tau }}u=0$ for every $\tau
>0$ and so, since $\Omega _{1}$ is connected when $N>1,$ $||u||_{\infty
,\Omega _{1}}\leq C||\,|\nabla u|\,||_{p,\Omega _{1}}$ by the
Poincar\'{e}-Wirtinger inequality. Once again by the boundedness of power
weights on $\Omega _{1},$ this yields $||\,|x|^{(a+N)/p}u||_{\infty ,\Omega
_{1}}\leq C||\,|x|^{1+a/p}|\nabla u|\,||_{p,\Omega _{1}}.$ More generally,
by scaling, 
\begin{equation}
||\,|x|^{(a+N)/p}u||_{\infty ,\Omega _{\tau }}\leq C||\,|x|^{1+a/p}|\nabla
u|\,||_{p,\Omega _{\tau }},  \label{23}
\end{equation}
with the same $C>0$ independent of $\tau .$ The right-hand side is majorized
by $C||\,|x|^{1+a/p}|\nabla u|\,||_{p}$ and then (\ref{22}) when $q=\infty $
follows from $||\,|x|^{(a+N)/p}u||_{\infty }=\sup_{\tau
>0}||\,|x|^{(a+N)/p}u||_{\infty ,\Omega _{\tau }}.$ The proof of this
equality is a simple exercise.

More generally, $||v||_{\infty ,\widetilde{B}_{R}}=\sup_{\tau \geq
R}||\,v||_{\infty ,\Omega _{\tau }}.$ Thus, when $v=\,|x|^{(a+N)/p}u$ with $
u $ as above, $||\,|x|^{(a+N)/p}u||_{\infty ,\widetilde{B}_{R}}\leq
C||\,|x|^{1+a/p}|\nabla u|\,||_{p,\widetilde{B}_{R}}$ by (\ref{23}). Since $%
p<\infty ,$ $\lim_{R\rightarrow \infty }||\,|x|^{(a+N)/p}u||_{\infty ,%
\widetilde{B}_{R}}=\lim_{R\rightarrow \infty }||\,|x|^{1+a/p}|\nabla
u|\,||_{p,\widetilde{B}_{R}}=0.$ By the continuity of $u$ away from $0$
(recall $p>N$) this means $\lim_{|x|\rightarrow \infty }|x|^{(a+N)/p}u(x)=0.$
\end{proof}

\begin{theorem}
\label{th8} Suppose that $s\neq -1$ and that $1\leq p<\infty $ and let $u\in 
\mathcal{D}^{\prime }$ be such that $\nabla u\in (L_{s}^{p})^{N}.$ \newline
(i) If $s>-1,$ then $u\in L_{s+1}^{q}$ for every $q\in I_{1,p}$ (see (\ref{4}%
)) and there are a constant $c_{u}$ independent of $s$ and $p$ (the same as
in Theorem \ref{th3}) and a constant $C=C(s,p,q)>0$ independent of $u$ such
that 
\begin{equation}
||u-c_{u}||_{L_{s+1}^{q}}\leq C||\,|\nabla u|\,||_{L_{s}^{p}},  \label{24}
\end{equation}
Furthermore, if $p>N$ or $p=N=1,$ then $\lim_{|x|\rightarrow \infty %
}|x|^{-(s+1)}u(x)=0.$\newline
(ii) If $s<-1$ and $N>1,$ there is a unique constant $c_{u}\in \Bbb{R}$
independent of $s$ and $p$ (the same as in Theorem \ref{th5}) such that $%
u-c_{u}\in L_{s+1}^{q}$ for every $q\in I_{1,p}$ and there is a constant $%
C=C(s,p,q)>0$ independent of $u$ such that (\ref{24}) holds. Furthermore, if 
$p>N,$ then $\lim_{|x|\rightarrow \infty }|x|^{-(s+1)}(u(x)-c_{u})=0.$
\end{theorem}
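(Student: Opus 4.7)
The plan is to decompose $u=u_{S}+(u-u_{S})$, where $u_{S}$ is the radial symmetrization introduced just before Lemma \ref{lm6}. For $u_{S}$ itself the desired inequality is essentially already at hand: Lemma \ref{lm6} gives $\nabla u_{S}\in (L_{s}^{p})^{N}$ with $||\,|\nabla u_{S}|\,||_{L_{s}^{p}}\leq ||\,|\nabla u|\,||_{L_{s}^{p}}$, and since $u_{S}$ is radially symmetric, Theorem \ref{th3}(ii) (if $s>-1$) or Theorem \ref{th5}(ii) (if $s<-1$) yields both the $L_{s+1}^{q}$ estimate for every $q\in I_{1,p}$ and the pointwise decay at infinity when $q=\infty $. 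The substantive work is to handle $w:=u-u_{S}$, which has vanishing radial symmetrization, via Lemma \ref{lm7}.

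A preliminary observation is that the centering constants match: $c_{u}=c_{u_{S}}$. In part (i), Fubini in spherical coordinates gives $\int_{B_{\rho }}u=\int_{B_{\rho }}u_{S}$, hence $c_{u_{S}}=c_{u}$. In part (ii), the limit formula (\ref{20}) identifies $c_{u}$ with $\lim_{r\rightarrow \infty }u_{S}(r,\cdot )=c_{u_{S}}$. In particular, the corresponding centering constant for $w$ is $0$; and in part (ii), uniqueness of $c_{u}$ for each $q\in I_{1,p}$ is automatic since $L_{s+1}^{q}$ contains no nonzero constants when $s<-1$. Applying the already-established $q=p$ case — Theorem \ref{th3}(i) or Theorem \ref{th5}(i) — to $w$ gives $||w||_{L_{s+1}^{p}}\leq C||\,|\nabla w|\,||_{L_{s}^{p}}\leq 2C||\,|\nabla u|\,||_{L_{s}^{p}}$.

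To upgrade from $p$ to arbitrary $q\in I_{1,p}$ for $w$, I invoke Lemma \ref{lm7} with $a:=-ps-p-N$, so that $1+a/p=-s-N/p$ and $(a+N)/p-N/q=-(s+1)-N/q$; the Lemma's weights then agree with those of $L_{s}^{p}$ and $L_{s+1}^{q}$ on every region where $|x|$ and $1+|x|$ are comparable. The main technical obstacle is that Lemma \ref{lm7} uses pure power weights, and $|x|^{a/p}$ is singular at the origin. To circumvent this I multiply $w$ by a smooth radial cutoff $\chi $ with $\chi \equiv 0$ on $B_{1}$ and $\chi \equiv 1$ on $\widetilde{B}_{2}$; radial symmetry of $\chi $ preserves $(\chi w)_{S}=0$, and on $\widetilde{B}_{1}$ the hypotheses of Lemma \ref{lm7} for $\chi w$ reduce to the known $w\in L_{s+1}^{p}$ and $\nabla w\in (L_{s}^{p})^{N}$. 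The Leibniz term $w\nabla \chi $ is supported in $B_{2}\setminus B_{1}$, where all the weights are bounded, and is therefore majorized by $C||w||_{p,B_{2}}\leq C||w||_{L_{s+1}^{p}}\leq C||\,|\nabla w|\,||_{L_{s}^{p}}$. Lemma \ref{lm7} then delivers $||w||_{L_{s+1}^{q}(\widetilde{B}_{2})}\leq C||\,|\nabla w|\,||_{L_{s}^{p}}$, and the Sobolev embedding $W^{1,p}(B_{2})\hookrightarrow L^{q}(B_{2})$ — valid for $q\in I_{1,p}$, and for $q=\infty $ exactly under the hypothesis $p>N$ or $p=N=1$ — supplies the matching bound on $B_{2}$.

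Combining the estimates for $u_{S}-c_{u_{S}}$ and for $w$ via the triangle inequality yields (\ref{24}). For the pointwise decay when $q=\infty $, the corresponding assertion of Lemma \ref{lm7} gives $\lim_{|x|\rightarrow \infty }|x|^{-(s+1)}w(x)=0$, which combined with the analogous limit for $u_{S}-c_{u_{S}}$ furnished by Theorem \ref{th3}(ii) or Theorem \ref{th5}(ii) produces $\lim_{|x|\rightarrow \infty }|x|^{-(s+1)}(u(x)-c_{u})=0$.
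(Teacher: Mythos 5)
Your proposal is correct and follows essentially the same route as the paper: reduce to the radially symmetric case via Theorems \ref{th3}/\ref{th5} and Lemma \ref{lm6}, verify $c_{u}=c_{u_{S}}$, and then treat $w=u-u_{S}$ (with $w_{S}=0$) by cutting off near the origin and applying Lemma \ref{lm7} with $a=-(s+1)p-N$. The only cosmetic difference is that you absorb the Leibniz term and the bound on the bounded region by reusing the already-established $q=p$ estimate for $w$, whereas the paper invokes the Poincar\'{e}--Wirtinger inequality on the annulus and on the unit ball directly; both are valid.
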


\begin{proof}
It is obvious that the constant $c_{u}=|B_{\rho }|^{-1}\int_{B_{\rho }}u$ of
Theorem \ref{th3} is independent of $s>-1$ and $p.$ For the constant $c_{u}$
of Theorem \ref{th5}, this independence of $s<-1$ and $p$ was noticed in the
comments following that theorem. Also, in (ii), the uniqueness of $c_{u}$
follows from $L_{s+1}^{q}$ containing no nonzero constant irrespective of $q$
when $s<-1.$

If $q=p,$ or if $u$ is radially symmetric, or if $N=1$ in (i), everything
was proved in Theorems \ref{th3} and \ref{th5}. Accordingly, we henceforth
assume $N>1$ and $q\in I_{1,p}.$ The formula $c_{u}=|B_{\rho
}|^{-1}\int_{B_{\rho }}u$ if $s>-1$ (Theorem \ref{th3}) shows that $%
c_{u}=c_{u_{S}}$ and, by (\ref{20}), the same thing is true if $s<-1.$ Thus, 
$||u-c_{u}||_{L_{s+1}^{q}}\leq
||u-u_{S}||_{L_{s+1}^{q}}+||u_{S}-c_{u_{S}}||_{L_{s+1}^{q}}$ and, since the
theorem is true in the radially symmetric case, it follows from Lemma \ref
{lm6} that $||u_{S}-c_{u_{S}}||_{L_{s+1}^{q}}\leq C||\,|\nabla
u|\,||_{L_{s}^{p}}.$ Consequently, the proof of (\ref{24}) is reduced to
showing that $||u-u_{S}||_{L_{s+1}^{q}}\leq C||\,|\nabla u|\,||_{L_{s}^{p}}.$
Since $||\,|\nabla (u-u_{S})|\,||_{L_{s}^{p}}\leq 2||\,|\nabla
u|\,||_{L_{s}^{p}}$ by Lemma \ref{lm6}, it suffices to show that $%
||u-u_{S}||_{L_{s+1}^{q}}\leq C||\,|\nabla (u-u_{S})|\,||_{L_{s}^{p}}.$ From
the remark that $(u-u_{S})_{S}=0,$ this will follow from 
\begin{equation}
||u||_{L_{s+1}^{q}}\leq C||\,|\nabla u|\,||_{L_{s}^{p}},  \label{25}
\end{equation}
when $\nabla u\in (L_{s}^{p})^{N}$ and $u_{S}=0.$

Likewise, since $\lim_{|x|\rightarrow \infty
}|x|^{-(s+1)}(u_{S}(x)-c_{u_{S}})=0$ when $p>N$ is known (by radial symmetry
and Theorems \ref{th3} and \ref{th5}) and since $c_{u}=c_{u_{S}},$ the proof
that $\lim_{|x|\rightarrow \infty }|x|^{-(s+1)}u(x)=0$ or that $%
\lim_{|x|\rightarrow \infty }|x|^{-(s+1)}(u(x)-c_{u})=0$ when $p>N$ is
reduced to showing that $\lim_{|x|\rightarrow \infty
}|x|^{-(s+1)}(u(x)-u_{S}(x))=0,$ i.e. that $\lim_{|x|\rightarrow \infty
}|x|^{-(s+1)}u(x)=0$ when $\nabla u\in (L_{s}^{p})^{N}$ and $u_{S}=0.$

From now on, $\nabla u\in (L_{s}^{p})^{N}$ and $u_{S}=0.$ We shall make
repeated use, without further mention, of the elementary properties that for
every $t\in \Bbb{R},$ the weights $(1+|x|)^{t}$ are bounded above and below
on every bounded subset of $\Bbb{R}^{N}$ and that they are equivalent to $%
|x|^{t}$ when $|x|$ is bounded away from $0.$

By Theorem \ref{th3}, $u\in L_{s+1}^{p}$ if $s>-1$ and, by Theorem \ref{th5}%
, $u\in L_{s+1}^{p}$ if $s<-1$ because $c_{u}=c_{u_{S}}$ and $u_{S}=0$ show
that $c_{u}=0.$ Thus, $u\in L_{s+1}^{p}$ when $s\neq -1.$

Let $\varphi \in \mathcal{C}^{\infty }$ be radially symmetric, with $\varphi
=0$ on a neighborhood of $0$ and $\varphi =1$ outside $B_{1}.$ Since $\nabla
u\in (L_{s}^{p})^{N}$ and $u\in L_{s+1}^{p},$ it is readily checked that $%
\varphi u\in L_{s+1}^{p}$ and $\nabla (\varphi u)\in (L_{s}^{p})^{N}$ and
that $(\varphi u)_{S}=0.$ By Lemma \ref{lm7} with $a=-(s+1)p-N,$ we infer
that $|x|^{-(s+1)-N/q}\varphi u\in L^{q}$ for every $q\in I_{1,p}$ and that 
\begin{equation*}
||\,|x|^{-(s+1)-N/q}\varphi u||_{q}\leq C||\,|x|^{-s-N/p}|\nabla (\varphi
u)|\,||_{p}.
\end{equation*}
From the equivalence of weights away from $0$ and since $\varphi =1$ outside 
$B_{1},$ this implies $\lim_{|x|\rightarrow \infty }u(x)=0$ if $p>N$ (by
Lemma \ref{lm7}) and, irrespective of $p,$ 
\begin{equation}
||u||_{L_{s+1}^{q}(\widetilde{B}_{1})}\leq C||\,|\nabla (\varphi
u)|\,||_{L_{s}^{p}}\leq C||u||_{p,\Omega }+C||\,|\nabla u|\,||_{L_{s}^{p}},
\label{26}
\end{equation}
where $\Omega $ is an annulus centered at the origin (not a ball, so that $%
|x|^{-sp-N}$ is bounded above on $\Omega $) containing $\limfunc{Supp}\nabla
\varphi .$

Note that $u_{S}=0$ implies $\int_{\Omega }u=0.$ Thus, since $\Omega $ is
connected, $||u||_{p,\Omega }\leq C||\,|\nabla u|\,||_{p,\Omega }\leq
C||\,|\nabla u|\,||_{L_{s}^{p}}$ by the Poincar\'{e}-Wirtinger inequality
and so, by (\ref{26}), 
\begin{equation}
||u||_{L_{s+1}^{q}(\widetilde{B}_{1})}\leq C||\,|\nabla u|\,||_{L_{s}^{p}}.
\label{27}
\end{equation}

On the other hand, $||u||_{L_{s+1}^{q}(B_{1})}\leq C||u||_{q,B_{1}}$ and $%
||u||_{q,B_{1}}\leq C||u||_{1,p,B_{1}}$ since $q\in I_{1,p}$ and $u\in
W_{loc}^{1,p}.$ In addition, $\int_{B_{1}}u=0$ and so $%
||u||_{L_{s+1}^{q}(B_{1})}\leq C||\,|\nabla u|\,||_{p,B_{1}}\leq
C||\,|\nabla u|\,||_{L_{s}^{p}}$ by the Poincar\'{e}-Wirtinger inequality on 
$B_{1}.$ Together with (\ref{27}), this proves (\ref{25}).
\end{proof}

\begin{remark}
\label{rm4}If $s>-1$ and $p>N,$ one may also choose $c_{u}=u(0)$ in (\ref{24}
). See Remark \ref{rm2} and notice that since $u$ is continuous, $%
u_{S}(0)=u(0)$ if $u_{S}$ is extended by continuity at $0.$ Thus, the
property $c_{u}=c_{u_{S}}$ is preserved (in particular, $c_{u}=0$ if $u_{S}=0
$) and the above proof can be repeated verbatim. Once again, by translation,
one may also choose $c_{u}=u(x_{0})$ with $x_{0}\in \Bbb{R}^{N}$ independent
of $u.$
\end{remark}

When $s>-1$ and $p>N,$ (\ref{24}) with $q\in I_{1,p}=[p,\infty ]$ and $%
c_{u}=u(0)$ (Remark \ref{rm4}) reads $||(1+|x|)^{-(s+1)-N/q}(u-u(0))||_{q}%
\leq C||\,(1+|x|)^{-s-N/p}|\nabla u|\,||_{p}.$ This inequality for $%
u_{\lambda }(x)=u(\lambda x),$ $\lambda >0,$ yields $||(\lambda
+|x|)^{-(s+1)-N/q}(u-u(0))||_{q}\leq C||(\lambda +|x|)^{-s-N/p}\,|\nabla
u|\,||_{p}$ (same $C$) and so $||\,|x|^{-(s+1)-N/q}(u-u(0))||_{q}\leq
C||(|x|^{-s-N/p}\,|\nabla u|\,||_{p}$ by Fatou's lemma and monotone
convergence ($s>-N/p$) or dominated convergence ($s\leq -N/p$). If $s<-N/p,$
then $|x|^{-s-N/p}\,|\nabla u|\in L^{p}$ does not imply $\nabla u\in
(L_{s}^{p})^{N}$ unless $\nabla u\in (L_{loc}^{p})^{N},$ which must then be
assumed. Hardy's (Morrey's) inequality is recovered when $s=-N/p$ ($>-1$)
and $q=p$ ($q=\infty $).

When $s>-1$ and $p\leq N$ (hence $s>-N/p$) and if $0$ is in the Lebesgue set
of $u,$ the constant $c_{u_{\lambda }}=|B_{\rho }|^{-1}\int_{B_{\rho
}}u_{\lambda }=|B_{\lambda \rho }|^{-1}\int_{B_{\lambda \rho }}u$ tends to
some finite value $\bar{u}(0)$ independent of $\rho $ as $\lambda
\rightarrow 0.$ Then, by scaling, $||\,|x|^{-(s+1)-N/q}(u-\bar{u}(0))||_{q}%
\leq C||(|x|^{-s-N/p}\,|\nabla u|\,||_{p}$ follows as before. This extends
the previous inequality when $p>N$ and $\bar{u}(0)=u(0),$ but $s=-N/p$ (the
classical case) and $q=\infty \notin I_{1,p}$ are now ruled out.

When $s<-1$ and $N>1,$ (\ref{24}) is $||(1+|x|)^{-(s+1)-N/q}(u-c_{u})||_{q}%
\leq C||(1+|x|)^{-s-N/p}\,|\nabla u|\,||_{p}$ with $c_{u}$ now given by (\ref
{20}). If $1\leq p<N,q=p^{*}$ and $s=-N/p$ ($<-1$), this is Sobolev's
inequality $||u-c_{u}||_{p^{*}}\leq C||\,|\nabla u|\,||_{p}.$ If $c_{u}=0,$
then $c_{u_{\lambda }}=0$ by (\ref{20}) and scaling yields $%
||\,|x|^{-(s+1)-N/q}u||_{q}\leq C||\,|x|^{-s-N/p}|\nabla u|\,||_{p}$ for $%
q\in I_{1,p},$ a general Hardy-Sobolev inequality. Once again, $\nabla u\in
(L_{loc}^{p})^{N}$ must be assumed if $s<-N/p.$ When $u\in \mathcal{C}%
_{0}^{\infty }$ (hence $c_{u}=0$ and $\nabla u\in (L_{loc}^{p})^{N}$) and $%
1\leq p<N,$ another proof is given by Maz'ya (case $m=N,n=0$ in [\cite{Ma11}%
, Corollary 2, p. 139]). If $q=p<N$ and $s=-N/p$ ($<-1$), the Hardy-Leray
inequality $||\,|x|^{-1}u||_{p}\leq C||\,|\nabla u|\,||_{p}$ is recovered.
\bigskip

The next theorem generalizes Theorem \ref{th8}. Before stating it, a
cautionary remark is in order. If, in Theorem \ref{th8}, $\nabla u\in
(L_{s_{1}}^{p_{1}})^{N}\cap (L_{s_{2}}^{p_{2}})^{N}$ and $s_{1}<-1<s_{2},$
both parts (i) and (ii) of the theorem are applicable. This yields two
constants $c_{u,i}$ independent of $s_{i},i=1,2,$ with $c_{u,1}$ unique,
such that (\ref{24}) holds with $s=s_{i}$ and $q\in I_{1,p_{i}}.$ Although
there are many ways to define $c_{u,2}$ as a function of $u,$ there is no
reason why $c_{u,2}=c_{u,1}$ would be an admissible choice whenever both
constants exist. Indeed, in Theorem \ref{th8}, the constant $c_{u}$ is only
independent of $s$ in each connected component of $\Bbb{R}\backslash \{-1\}$
and its definition must be changed when $s$ crosses $-1.$

A similar issue arises if $s_{1},s_{2}>-1$ and $p_{1}\leq N<p_{2}.$ If so,
it is possible to choose $c_{u,1}=c_{u,2}=|B_{\rho }|^{-1}\int_{B_{\rho }}u$
with $\rho >0.$ However, by Remark \ref{rm2}, $c_{u,2}=u(0)$ is another
possible choice, but since $p_{1}\leq N,$ this does not mean that $%
c_{u,1}=u(0)$ is admissible.

\begin{theorem}
\label{th9}Suppose that $k\in \Bbb{N},$ that $s\notin \{-k,...,-1\}$ and
that $N>1$ if $s<-1.$ Let $u\in \mathcal{D}^{\prime }$ be such that $\nabla
^{k}u\in (L_{s}^{p})^{\nu (k,N)}$ with $1\leq p<\infty .$ Then, there is a
polynomial $\pi _{u}\in \mathcal{P}_{k-1},$ independent of $p$ and
independent of $s$ in each connected component of $\Bbb{R}\backslash
\{-k,...,-1\},$ unique if $s<-k,$ such that $\nabla ^{k-j}(u-\pi _{u})\in
(L_{s+j}^{q})^{\nu (k-j,N)}$ for every $1\leq j\leq k$ and every $q\in
I_{j,p}$ (see (\ref{4})) and there is a constant $C=C(s,j,p,q)>0$
independent of $u$ such that 
\begin{equation}
||\,|\nabla ^{k-j}(u-\pi _{u})|\,||_{L_{s+j}^{q}}\leq C||\,|\nabla
^{k}u|\,||_{L_{s}^{p}}.  \label{28}
\end{equation}
Furthermore, $\lim_{|x|\rightarrow \infty }|x|^{-(s+j)}(\nabla
^{k-j}u(x)-\nabla ^{k-j}\pi _{u}(x))=0$ if $p=N=j=1$ or if $1\leq j\leq k$
and $p>N/j.$ (In particular, $\lim_{|x|\rightarrow \infty %
}|x|^{-(s+j)}\nabla ^{k-j}u(x)=0$ if also $s>-1.$)
\end{theorem}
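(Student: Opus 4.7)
The plan is induction on $k$, with base case $k=1$ already furnished by Theorem \ref{th8}. For the inductive step, assume the theorem for $k-1$ and suppose $\nabla^k u \in (L_s^p)^{\nu(k,N)}$ with $s \notin \{-k,\ldots,-1\}$.

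For each multi-index $\alpha$ of length $k-1$, $\nabla(\partial^\alpha u)\in (L_s^p)^N$ and $s\neq -1$ (with $N>1$ when $s<-1$), so Theorem \ref{th8} provides a scalar $c_\alpha$ --- independent of $p$ and of $s$ within each component of $\Bbb{R}\setminus\{-1\}$, unique if $s<-1$ --- such that $||\partial^\alpha u-c_\alpha||_{L_{s+1}^{q_1}}\leq C||\,|\nabla^k u|\,||_{L_s^p}$ for every $q_1\in I_{1,p}$, with the stated pointwise decay when $p>N$ or $p=N=1$. Since distributional mixed partials commute, $\alpha\mapsto c_\alpha$ is well defined on multi-indices, and $P(x):=\sum_{|\alpha|=k-1} c_\alpha x^\alpha/\alpha!$ is a homogeneous polynomial of degree $k-1$ with $\partial^\alpha P=c_\alpha$, yielding $\nabla^{k-1}(u-P)\in (L_{s+1}^{q_1})^{\nu(k-1,N)}$. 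This settles $j=1$.

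Set $w:=u-P$. Since $s\notin\{-k,\ldots,-1\}$ forces $s+1\notin\{-(k-1),\ldots,-1\}$ and preserves the requirement $N>1$ when $s+1<-1$, the induction hypothesis applies to $w$ at the parameters $(s+1,q_1)$ for any $q_1\in I_{1,p}$, producing $\pi_w\in \mathcal{P}_{k-2}$, independent of $q_1$ and of $s+1$ on its component, with $\nabla^{(k-1)-j'}(w-\pi_w)\in (L_{s+j'+1}^{q})^{\nu(k-1-j',N)}$ for $1\leq j'\leq k-1$ and $q\in I_{j',q_1}$, together with the norm bound and decay. Setting $\pi_u:=P+\pi_w\in \mathcal{P}_{k-1}$ and $j:=j'+1$ handles $2\leq j\leq k$. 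To attain every $q\in I_{j,p}$ I pick $q_1=q$ when $q\in I_{1,p}$ (so $q\in I_{j-1,q}$ trivially), else $q_1=p^{*1}$; the Sobolev chain identity $(p^{*1})^{*(j-1)}=p^{*j}$ gives $I_{j-1,p^{*1}}=[p^{*1},p^{*j}]$, so $I_{j,p}$ is covered (the critical cases $p=N/j$ are handled via the second branch of (\ref{4})). Composing the Theorem \ref{th8} estimate with the induction estimate yields (\ref{28}).

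For the pointwise decay when $p>N/j$ (or $p=N=j=1$), I choose $q_1\in I_{1,p}$ with $q_1>N/(j-1)$, which is possible because $p>N/j$ entails $p^{*1}>N/(j-1)$ when $p<N$, and $q_1=p$ itself suffices when $p\geq N$; induction at level $j-1$ then delivers the decay. Uniqueness when $s<-k$ follows from Theorem \ref{th8}(ii) (making each $c_\alpha$ unique) and induction (since $s+1<-(k-1)$ makes $\pi_w$ unique). Independence of $\pi_u$ from $(s,p)$ on a component of $\Bbb{R}\setminus\{-k,\ldots,-1\}$ is inherited from Theorem \ref{th8} and the induction hypothesis, using that each such component lies inside a single component of $\Bbb{R}\setminus\{-1\}$ and, after a shift by $+1$, inside a single component of $\Bbb{R}\setminus\{-(k-1),\ldots,-1\}$. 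The main obstacle is the bookkeeping for the chained $I_{j,p}$ ranges at critical exponents and ensuring the pointwise-decay condition $p'>N/j'$ is preserved through the induction; once that is verified, the argument is a clean reduction of the $k$th-order statement to the first-order Theorem \ref{th8}.
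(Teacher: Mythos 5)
Your proof follows the paper's argument essentially verbatim: induction on $k$, with the homogeneous degree-$(k-1)$ part $\sum_{|\alpha|=k-1}(\alpha!)^{-1}c_{\alpha}x^{\alpha}$ obtained by applying Theorem \ref{th8} to each $\partial^{\alpha}u$, subtraction, and application of the inductive hypothesis at $s+1$ over $q_{1}\in I_{1,p}$, exactly as in the paper (which phrases the exponent bookkeeping as $\cup_{p_{1}\in I_{1,p},\,p_{1}<\infty}I_{j-1,p_{1}}=I_{j,p}$). The only slip is your choice $q_{1}=q$ when $q=\infty\in I_{1,p}$, which is not a legal parameter for the inductive hypothesis; one must instead take a finite $q_{1}\in I_{1,p}$ with $q_{1}>N/(j-1)$, precisely as you already do for the pointwise-decay part.
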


\begin{proof}
The uniqueness of $\pi _{u}$ when $s<-k$ follows from the remark that $%
L_{s+k}^{q}$ contains no nonzero polynomial for any $q.$

By Theorem \ref{th8}, $\pi _{u}=c_{u}$ exists when $k=1.$ Suppose $k>1$ and
that $\pi _{u}$ exists when $k$ is replaced with $k-1.$ The hypothesis $%
\nabla ^{k}u\in (L_{s}^{p})^{\nu (k,N)}$ implies $\nabla (\partial ^{\alpha
}u)\in (L_{s}^{p})^{N}$ for every multi-index $\alpha $ with $|\alpha |=k-1.$
Since $s\neq -1,$ it follows from Theorem \ref{th8} that there is a constant 
$c_{\alpha }:=c_{\partial ^{\alpha }u},$ independent of $s$ in each
connected component of $\Bbb{R}\backslash \{-1\}$ (and independent of $p,$
but we will return to this point later), such that $\partial ^{\alpha
}u-c_{\alpha }\in L_{s+1}^{p_{1}}$ for every $p_{1}\in I_{1,p}$ and there is
a constant $C_{\alpha }>0$ independent of $u$ such that $||\partial ^{\alpha
}u-c_{\alpha }||_{L_{s+1}^{p_{1}}}\leq C_{\alpha }||\,|\nabla (\partial
^{\alpha }u)|\,||_{L_{s}^{p}}.$ Upon replacing $C_{\alpha }$ with $%
\max_{\alpha }C_{\alpha },$ this yields 
\begin{equation}
||\partial ^{\alpha }u-c_{\alpha }||_{L_{s+1}^{p_{1}}}\leq C||\,|\nabla
^{k}u|\,||_{L_{s}^{p}},  \label{29}
\end{equation}
with $C$ independent of $u$ and $\alpha .$ For future use, note also that,
still by Theorem \ref{th8}, 
\begin{equation}
\lim_{|x|\rightarrow \infty }|x|^{-(s+1)}(\partial ^{\alpha }u(x)-c_{\alpha
})=0\text{ if }p>N\text{ or }p=N=1.  \label{30}
\end{equation}
Set 
\begin{equation}
\pi _{u,k-1}(x):=\sum_{|\alpha |=k-1}(\alpha !)^{-1}c_{\alpha }x^{\alpha }
\label{31}
\end{equation}
and let $v:=u-$ $\pi _{u,k-1}.$ Then, $\partial ^{\alpha }v=\partial
^{\alpha }u-c_{\alpha }$ for every $\alpha $ with $|\alpha |=k-1$ and, by (%
\ref{29}), $\nabla ^{k-1}v\in (L_{s+1}^{p_{1}})^{\nu (k-1,N)}$ for every $%
p_{1}\in I_{1,p},$ with 
\begin{equation}
||\,|\nabla ^{k-1}v|\,||_{L_{s+1}^{p_{1}}}\leq C||\,|\nabla
^{k}u|\,||_{L_{s}^{p}}.  \label{32}
\end{equation}

Since $s\notin \{-k,...,-1\}$ implies $s+1\notin \{-k+1,...,-1\},$ it
follows from the hypothesis of induction with $s$ replaced with $s+1$ that,
as long as $p_{1}$ above is finite, there is a polynomial $\pi _{v}\in 
\mathcal{P}_{k-2}$ independent of $p_{1}$ and independent of $s+1$ in each
connected component of $\Bbb{R}\backslash \{-k+1,...,-1\},$ such that $%
\nabla ^{k-1-j}(v-\pi _{v})\in (L_{s+1+j}^{q})^{\nu (k-1,N)}$ for every $%
1\leq j\leq k-2$ and every $q\in I_{j,p_{1}}$ and that, for every such $j$
and $q,$ there is a constant $C=C(j,s,p_{1},q)>0$ independent of $v$ such
that $||\,|\nabla ^{k-1-j}(v-\pi _{v})|\,||_{L_{s+1+j}^{q}}\leq C||\,|\nabla
^{k-1}v|\,||_{L_{s+1}^{p_{1}}}.$

Upon changing $j$ into $j-1,$ this may be rewritten as 
\begin{equation}
||\,|\nabla ^{k-j}(v-\pi _{v})|\,||_{L_{s+j}^{q}}\leq C||\,|\nabla
^{k-1}v|\,||_{L_{s+1}^{p_{1}}},  \label{33}
\end{equation}
for $2\leq j\leq k$ and $q\in \cup _{p_{1}\in I_{1,p},p_{1}<\infty
}I_{j-1,p_{1}}.$ A routine verification shows that $\cup _{p_{1}\in
I_{1,p},p_{1}<\infty }I_{j-1,p_{1}}=I_{j,p}.$ Thus, (\ref{33}) holds for $%
2\leq j\leq k$ and $q\in I_{j,p}.$ By (\ref{32}) and since $v-\pi _{v}=u-\pi
_{u}$ with $\pi _{u}:=\pi _{v}+\pi _{u,k-1},$ it follows that 
\begin{equation*}
||\,|\nabla ^{k-j}(u-\pi _{u})|\,||_{L_{s+j}^{q}}\leq C||\,|\nabla
^{k}u|\,||_{L_{s}^{p}},
\end{equation*}
for $2\leq j\leq k$ and $q\in I_{j,p}.$ Since (\ref{29}) is the same
inequality when $j=1$ (with $q$ called $p_{1}\in I_{1,p}$), the proof of (%
\ref{28}) is complete.

As noted, $\pi _{u,k-1}$ is independent of $s$ in each connected components
of $\Bbb{R}\backslash \{-1\}$ and $\pi _{v}$ is independent of $s+1$ in each
connected components of $\Bbb{R}\backslash \{-k+1,...,-1\},$ that is, of $s$
in each connected components of $\Bbb{R}\backslash \{-k,...,-2\}.$ Thus, $%
\pi _{u}:=\pi _{v}+\pi _{u,k-1}$ is independent of $s$ in each connected
component of $\Bbb{R}\backslash \{-k,...,-1\}.$ That $\pi _{u}$ is also
independent of $p$ will be obvious when we discuss how $\pi _{u}$ can be
calculated, after Remark \ref{rm6}.

We now prove the ``furthermore'' part. By (\ref{30}), $\lim_{|x|\rightarrow
\infty }|x|^{-(s+j)}(\nabla ^{k-j}u(x)-\nabla ^{k-j}\pi _{u}(x))=0$ if $%
p>N/j $ or $p=N=1$ holds if $j=1.$ In general, the proof goes by induction
on $j\in \{1,...,k\}.$ Suppose $j>1$ (hence $k>1$) and $p>N/j.$ Then, $%
p^{*}>N/(j-1)$ (recall $p^{*}=\infty $ if $p\geq N$ and $p^{*}=Np/(N-p)$ if $%
p<N$) and so the interval $I_{1,p}$ contains some $p_{1}\in (N/(j-1),\infty
).$ Therefore, with $v$ as above, it follows from the hypothesis of
induction with $s$ replaced with $s+1$ and $j$ replaced with $j-1$ (and
since $(s+1)+(j-1)=s+j$) that $\lim_{|x|\rightarrow \infty
}|x|^{-(s+j)}(v(x)-\pi _{v}(x))=0.$ Since $v-\pi _{v}=u-\pi _{u}$ by
definition of $\pi _{u},$ this is $\lim_{|x|\rightarrow \infty
}|x|^{-(s+j)}(\nabla ^{k-j}u(x)-\nabla ^{k-j}\pi _{u}(x))=0.$

If $s>-1,$ then $-(s+j)<1-j.$ Hence, $\lim_{|x|\rightarrow \infty
}|x|^{-(s+j)}\nabla ^{k-j}\pi _{u}(x)=0$ and so $\lim_{|x|\rightarrow \infty
}|x|^{-(s+j)}\nabla ^{k-j}u(x)=0.$
\end{proof}

\begin{remark}
\label{rm5}By Remark \ref{rm3}, Theorem \ref{th8} and Theorem \ref{th9}
remain true on $\Bbb{R}_{\pm },$ even if $s<-1.$
\end{remark}

\begin{remark}
\label{rm6}A generalization of Theorem \ref{th9}, with the same proof, is as
follows: If $\ell \in \{1,...,k\}$ and $s\notin \{-\ell ,...,-1\}$ and if $
N>1$ when $s<-1,$ a polynomial $\pi _{u}\in \mathcal{P}_{k-1}$ still exists,
which is unique modulo\footnote{%
This simply means that its part of degree at least $k-\ell $ is unique.} $%
\mathcal{P}_{k-\ell -1}$ if $s<-\ell ,$ such that $\nabla ^{k-j}(u-\pi
_{u})\in (L_{s+j}^{q})^{\nu (k-j,N)}$ and (\ref{28}) holds for every $1\leq
j\leq \ell $ and every $q\in I_{j,p}.$ Furthermore, $\lim_{|x|\rightarrow
\infty }|x|^{-(s+j)}(\nabla ^{k-j}u(x)-\nabla ^{k-j}\pi _{u}(x))=0$ if $%
p=N=j=1$ or if $1\leq j\leq \ell $ and $p>N/j.$ The part of $\pi _{u}$ of
degree at least $k-\ell $ is obtained as in the proof of Theorem \ref{th9}
and its part of degree at most $k-\ell -1$ is irrelevant since it vanishes
under the action of $\nabla ^{k-j}$ when $j\leq \ell .$ Theorem \ref{th9} is
recovered when $\ell =k.$
\end{remark}

Further comments about the polynomial $\pi _{u}$ of Theorem \ref{th9} are in
order. If $s<-k,$ then $s<-1$ and the coefficients $(\alpha !)^{-1}c_{\alpha
}$ of $\pi _{u,k-1}$ in (\ref{31}) are given by the formula (\ref{20}) with $%
u$ replaced with $(\alpha !)^{-1}\partial ^{\alpha }u$ and $|\alpha |=k-1.$
If $k>1,$ finding $\pi _{u}=\pi _{u,k-1}+\pi _{v}$ amounts to finding $\pi
_{v}$ where $v=u-\pi _{u,k-1}.$ Since $\pi _{v}\in \mathcal{P}_{k-2}$ and $%
\nabla ^{k-1}v\in (L_{s+1}^{p})^{\nu (k-1,N)}$ and since $s+1<-(k-1)<-1,$
the coefficients of the homogeneous part $\pi _{v,k-2}$ of degree $k-2$ of $%
\pi _{v}$ are given by (\ref{20}) with $u$ replaced with $(\alpha
!)^{-1}\partial ^{\alpha }v$ for $|\alpha |=k-2.$ The (unique) polynomial $%
\pi _{u}$ is fully determined after $k$ steps. Its definition is obviously
independent of $1\leq p<\infty $ such that $\nabla ^{k}u\in (L_{s}^{p})^{\nu
(k,N)}.$

If $s>-1,$ then $\mathcal{P}_{k-1}\subset L_{s+k}^{q}$ for every $q.$ In
particular, if $q\in I_{k,p},$ it follows from $u-\pi _{u}\in L_{s+k}^{q}$
that $u\in L_{s+k}^{q}.$ There are now many different ways to define a
suitable polynomial $\pi _{u}.$ By Theorem \ref{th3}, a possible choice for
the coefficients $(\alpha !)^{-1}c_{\alpha }$ of $\pi _{u,k-1}$ in (\ref{31}
) is $(\alpha !)^{-1}c_{\alpha }=|B_{\rho }|^{-1}\int_{B_{\rho }}(\alpha
!)^{-1}\partial ^{\alpha }u$ where $|\alpha |=k-1$ and $\rho >0$ is
independent of $u,$ but there are other options. Indeed, one could as well
define $(\alpha !)^{-1}c_{\alpha }=|B_{\rho _{\alpha
}}|^{-1}\int_{B(x_{\alpha },\rho _{\alpha })}(\alpha !)^{-1}\partial
^{\alpha }u,$ where $x_{\alpha }$ and $\rho _{\alpha }>0$ are independent of 
$u$ but depend upon $\alpha ;$ see the comments after Theorem \ref{th3}.

Once $\pi _{u,k-1}$ has been chosen, $v=u-\pi _{u,k-1}$ is known and the
problem is reduced to finding $\pi _{v}.$ This is the same problem with $s$
replaced with $s+1$ and $k$ replaced with $k-1.$ Since $s>-1$ implies $%
s+1>-1,$ the coefficients of the homogeneous part $\pi _{v,k-2}$ of $\pi
_{v} $ can be defined by $(\alpha !)^{-1}c_{\alpha }=|B_{\rho _{\alpha
}}|^{-1}\int_{B(x_{\alpha },\rho _{\alpha })}(\alpha !)^{-1}\partial
^{\alpha }v$ where $|\alpha |=k-2$ and $x_{\alpha }$ and $\rho _{\alpha }>0$
are once again arbitrarily chosen. A polynomial $\pi _{u}$ is obtained in $k$
steps. Clearly, different choices of $x_{\alpha }$ and $\rho _{\alpha }$
produce different polynomials $\pi _{u},$ but no matter how these choices
are made, they are always independent of $1\leq p<\infty $ such that $\nabla
^{k}u\in (L_{s}^{p})^{\nu (k,N)}.$

Still when $s>-1,$ but only when $p>N,$ the Taylor polynomial of $u$ at any
point $x_{0}$ is another possible choice for $\pi _{u}.$ This is most easily
seen when $x_{0}=0.$ First, by Remark \ref{rm4}, $(\alpha !)^{-1}\partial
^{\alpha }u(0)$ is a possible choice for the coefficients $(\alpha
!)^{-1}c_{\alpha }$ of $\pi _{u,k-1}$ when $|\alpha |=k-1.$ Next, $(\alpha
!)^{-1}\partial ^{\alpha }v(0)$ is a possible choice for the coefficients of 
$\pi _{v,k-2}$ when $|\alpha |=k-2,$ but since $v=u-\pi _{u,k-1}$ and $%
\partial ^{\alpha }\pi _{u,k-1}(0)=0$ when $|\alpha |=k-2,$ these
coefficients are just $(\alpha !)^{-1}\partial ^{\alpha }u(0).$ By repeating
this argument, $\sum_{|\alpha |\leq k-1}(\alpha !)^{-1}\partial ^{\alpha
}u(0)x^{\alpha }$ is a possible choice for $\pi _{u}$ and, by changing $u(x)$
into $u(x+x_{0}),$ it follows that $\sum_{|\alpha |\leq k-1}(\alpha
!)^{-1}\partial ^{\alpha }u(x_{0})(x-x_{0})^{\alpha }$ is an equally
possible choice.

When $s\in (-k,-1),$ the procedure to find a suitable polynomial $\pi _{u}$
combines the approaches of the previous two cases. Because $s<-1,$ the
homogeneous part $\pi _{u,k-1}$ of degree $k-1$ of $\pi _{u}$ is still
unique, but the homogeneous part $\pi _{v,k-2}$ of degree $k-2$ of $\pi _{v}$
is unique only if $s+1<-1.$ Otherwise, it must be determined as indicated
above when $s>-1$ after replacing $s$ with $s+1.$ More generally, if $%
k_{s}:=E(s+k+1)$ where $E$ denotes integer part, so that $1\leq k_{s}\leq
k-1,$ the part of $\pi _{u}$ of degree greater than or equal to $k_{s}$ is
unique and the part of degree less than or equal to $k_{s}-1$ is determined
as indicated above when $s>-1$ after replacing $s$ with $s+k-k_{s}>-1.$ Once
again, the calculation of $\pi _{u}$ does not depend on $1\leq p<\infty $
such that $\nabla ^{k}u\in (L_{s}^{p})^{\nu (k,N)}.$

The next corollary singles out the more familiar case when $s=-N/p,$ i.e.,
when $\nabla ^{k}u\in (L^{p})^{\nu (k,N)}.$ Since $s$ and $p$ are now
related, $s$ lies in some connected component of $\Bbb{R}\backslash
\{-k,...,-1\}$ if and only if $p$ lies in the corresponding connected
component of $\Bbb{R}\backslash \{N/k,N/(k-1),...,N\}.$

\begin{corollary}
\label{cor10}Suppose that $k\in \Bbb{N}$ and that $1\leq p<\infty $ with $%
p\neq N/j$ for $j=1,...,k.$ If $u\in \mathcal{D}^{\prime }$ and $\nabla
^{k}u\in (L^{p})^{\nu (k,N)},$ there is a polynomial $\pi _{u}\in \mathcal{P}%
_{k-1}$ independent of $p$ in each connected component of $\Bbb{R}\backslash
\{N/k,N/(k-1),...,N\},$ unique if $p<N/k,$ such that $\nabla ^{k-j}(u-\pi
_{u})\in (L_{j-N/p}^{q})^{\nu (k-j,N)}$ for every $1\leq j\leq k$ and every $%
q\in I_{j,p}$ and there is a constant $C=C(s,j,p,q)>0$ independent of $u$
such that 
\begin{equation}
||\,|\nabla ^{k-j}(u-\pi _{u})|\,||_{L_{j-N/p}^{q}}\leq C||\,|\nabla
^{k}u|\,||_{p}.  \label{34}
\end{equation}
Furthermore, $\lim_{|x|\rightarrow \infty }|x|^{-j+N/p}(\nabla
^{k-j}u(x)-\nabla ^{k-j}\pi _{u}(x))=0$ if $1\leq j\leq k$ and $p>N/j.$ (In
particular, $\lim_{|x|\rightarrow \infty }|x|^{-j+N/p}\nabla ^{k-j}u(x)=0$
for every $1\leq j\leq k$ if $p>N.$)
\end{corollary}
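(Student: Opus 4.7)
The corollary is a direct specialization of Theorem \ref{th9} to the case $s = -N/p$, and the plan is simply to verify that the hypotheses and conclusions of the two statements match up under this substitution, with particular care given to the correspondence of the excluded parameter sets and their connected components.

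First I would note that, directly from the definition (\ref{1}), $L^{p}_{-N/p} = L^{p}$ with identical norms, so $\nabla^{k}u \in (L^{p})^{\nu(k,N)}$ is the same as $\nabla^{k}u \in (L^{p}_{s})^{\nu(k,N)}$ for $s = -N/p$, and $||\,|\nabla^{k}u|\,||_{L^{p}_{s}} = ||\,|\nabla^{k}u|\,||_{p}$. Under $s = -N/p$, the exclusion $s \notin \{-k, \dots, -1\}$ becomes precisely $p \neq N/j$ for $j = 1, \dots, k$, and $s + j = j - N/p$, so the target spaces $L^{q}_{s+j}$ of Theorem \ref{th9} coincide with the spaces $L^{q}_{j-N/p}$ appearing in (\ref{34}). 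The pointwise condition $p > N/j$ in the corollary matches $s + j > 0$, which under $s = -N/p$ is precisely $p > N/j$, so the last clause also translates cleanly.

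Second, I would check that the side restriction ``$N > 1$ when $s < -1$'' of Theorem \ref{th9} never fails here. If $N = 1$, then $p \neq N/j$ for $j = 1, \dots, k$ forces $p > 1$ (the only excluded value in $[1, \infty)$ is $p = 1$, since $N/j < 1$ for $j \geq 2$), so $s = -1/p > -1$ and we are in the range where Theorem \ref{th9} makes no dimension restriction. Consequently Theorem \ref{th9} applies and yields the polynomial $\pi_{u}$, the inequality (\ref{28}) (which rewrites as (\ref{34})), and the pointwise limit.

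The only point requiring a brief argument beyond direct substitution is the claim that $\pi_{u}$ can be chosen independent of $p$ within each connected component of $\mathbb{R} \setminus \{N/k, N/(k-1), \dots, N\}$. Theorem \ref{th9} gives independence of $s$ within each connected component of $\mathbb{R} \setminus \{-k, \dots, -1\}$. The map $p \mapsto -N/p$ is an order-reversing homeomorphism of $(0, \infty)$ onto $(-\infty, 0)$ that sends the set $\{N/k, \dots, N\}$ bijectively onto $\{-k, \dots, -1\}$, so it maps each connected component of the punctured $p$-half-line onto a connected component of the punctured $s$-half-line, and the independence assertion transfers. Uniqueness when $p < N/k$ corresponds to $s < -k$, where Theorem \ref{th9} already provides uniqueness. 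There is really no obstacle in this proof beyond the bookkeeping just described; the main content of the corollary is entirely packaged inside Theorem \ref{th9}.
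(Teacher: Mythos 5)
Your proof is correct and is exactly the paper's (implicit) derivation: the corollary is stated as the specialization $s=-N/p$ of Theorem \ref{th9}, with the same observation that $p\mapsto -N/p$ carries the components of $\Bbb{R}\backslash\{N/k,\dots,N\}$ onto those of $\Bbb{R}\backslash\{-k,\dots,-1\}$. Your extra check that $N=1$ forces $s=-1/p>-1$ (so the dimension restriction of Theorem \ref{th9} is never violated) is a worthwhile detail the paper leaves unstated.
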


When $j=k,$ the ``furthermore'' part of Corollary \ref{cor10} was proved by
Mizuta \cite{Mi86}. If $p<N/k,$ (\ref{34}) for $q=p^{*j}=Np/(N-jp)$ and $%
j\in \{1,...,k\}$ becomes $||\,|\nabla ^{k-j}(u-\pi _{u})|\,||_{p^{*}}\leq
C||\,|\nabla ^{k}u|\,||_{p}$ for $1\leq j\leq k,$ a Sobolev inequality of
order $j$ easily proved directly by induction. It can be found in \cite{Ga11}%
, in the more general form given in Remark \ref{rm6} (i.e., when $p<N/\ell $
for some $\ell \leq k$ and $1\leq j\leq \ell $).

The scaling trick used when $k=1$ yields numerous inequalities (\ref{28})
with $1+|x|$ replaced with $|x|.$ We only give a small sample when $s=-N/p$
(so that (\ref{28}) is (\ref{34})) and $j=k.$ The earlier discussion about
the calculation of $\pi _{u}$ is crucial. If $k<N$ and $p<N/k,$ then $%
s=-N/p<-k$ and, with $c_{\partial ^{\alpha }u}$ given by (\ref{20}) for $%
\partial ^{\alpha }u,$ it follows that $\pi _{u}=0$ if $c_{\partial ^{\alpha
}u}=0$ for $|\alpha |\leq k-1$ (in particular, if $u$ has compact support).
By scaling (\ref{28}) with $j=k,$ we get $||\,|x|^{-k+N/p-N/q}u||_{q}\leq
C||\,|\nabla ^{k}u|\,||_{p}$ for $q\in I_{k,p},$ a Hardy-Sobolev inequality
of order $k.$

If ($p\geq 1$ and) $N/\ell <p<N/(\ell -1)$ for some $1\leq \ell \leq k$ and
with $c_{\partial ^{\alpha }u}$ given by (\ref{20}) for $\partial ^{\alpha
}u,$ then $\deg \pi _{u}\leq k-\ell $ if $c_{\partial ^{\alpha }u}=0$ when $%
k-\ell +1\leq |\alpha |\leq k-1$ (vacuous if $\ell =1$ and trivially true if 
$\ell \geq 2$ and $\nabla ^{k-\ell +1}u$ has compact support) and $\pi _{u}$
may be chosen as the Taylor polynomial of $u$ of order $k-\ell $ at $0.$
Then, scaling in (\ref{28}) with $j=k$ yields $||\,|x|^{-k+N/p-N/q}(u-\pi 
_{u})||_{q}\leq C||\,|\nabla ^{k}u|\,||_{p}$ for $q\in I_{k,p}=[p,\infty ].$
If $\ell =1$ (i.e., $p>N$), $\pi _{u}$ is the Taylor polynomial of $u$ of
order $k-1$ at $0$ and the inequality is a Hardy (Morrey) inequality of
order $k$ if $q=p$ ($q=\infty $). If $\ell >1,$ the required conditions are
non-standard in inequalities of this sort.\bigskip 

\section{An application to embeddings of weighted Sobolev spaces\label%
{application}}

For $k\in \Bbb{N},s\in \Bbb{R}$ and $1\leq p<\infty ,1\leq q\leq \infty ,$
consider the space 
\begin{equation*}
W_{s}^{k,q,p}:=\{u\in L_{s+k}^{q}:\nabla ^{k}u\in (L_{s}^{p})^{\nu (k,N)}\},
\end{equation*}
equipped with the Banach space norm $||u||_{L_{s+k}^{q}}+||\,|\nabla
^{k}u|\,||_{L_{s}^{p}}.$ When $k=1,s<-1$ and $q\in I_{1,p}$ is finite, it
follows from \cite[Example 21.10 (i), p. 309]{OpKu90} that $%
W_{s}^{1,p,p}\hookrightarrow W_{s}^{1,q,p}$ and that the norm of $%
W_{s}^{1,p,p}$ is equivalent to the norm $||\,|\nabla u|\,||_{L_{s}^{p}}.$
In Theorem \ref{th12} below, we show that if $s\neq -1$ -not just $s<-1$-
and $q\in I_{1,p}$ (possibly $\infty $), then in fact $%
W_{s}^{1,p,p}=W_{s}^{1,q,p}$ with equivalent norms\footnote{%
However, the norms are equivalent to $||\,|\nabla u|\,||_{L_{s}^{p}}$ only
when $s<-1.$} and that the reverse embedding $W_{s}^{1,q,p}\hookrightarrow
W_{s}^{1,p,p}$ holds for every $1\leq q\leq \infty .$ We also show that the
spaces $W_{s}^{k,q,p},$ $k\in \Bbb{N},$ have similar (and other) properties
if $s\notin \{-k,...,-1\}.$

\begin{lemma}
\label{lm11}Suppose $k\in \Bbb{N}$ and $s\in \Bbb{R},$ $s\notin
\{-k,...,-1\}.$ For a polynomial $\pi \in \mathcal{P}_{k-1},$ the following
conditions are equivalent:\newline
(i) $\pi \in L_{s+k}^{q}$ for every $1\leq q\leq \infty .$\newline
(ii) $\pi \in L_{s+k}^{q_{1}}+L_{s+k}^{q_{2}}$ for some $1\leq q_{1},q_{2}%
\leq \infty .$
\end{lemma}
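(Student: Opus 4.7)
The implication (i) $\Rightarrow$ (ii) is trivial: given (i), pick any $q_1, q_2$ and use the decomposition $\pi = \pi + 0$. All the substance lies in (ii) $\Rightarrow$ (i).

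My first step is to characterize, for a polynomial $\pi \in \mathcal{P}_{k-1}$ of actual degree $d$, exactly when $\pi \in L_{s+k}^q$. Writing $\pi$ as the sum of its homogeneous components and using that the degree-$d$ part $\pi_d$ dominates at infinity, passage to polar coordinates gives the spherical-mean asymptotic
\[
\int_{\Bbb{S}^{N-1}} |\pi(r\sigma)|^q d\sigma \sim c\, r^{dq}, \qquad r \to \infty,
\]
with $c = ||\pi_d||_{q,\Bbb{S}^{N-1}}^q > 0$ (positive because $\pi_d$ is a nonzero continuous function on the sphere). For finite $q$, $\pi \in L_{s+k}^q$ then amounts to the convergence of $\int^\infty (1+r)^{-(s+k)q - N} r^{dq + N - 1} dr$, i.e., to $d < s+k$; the case $q = \infty$ gives the weaker $d \le s+k$. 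Since $d \in \{0, 1, \ldots, k-1\}$ and the hypothesis forbids $s+k$ from lying in that integer set, the borderline $d = s+k$ cannot occur, so both criteria collapse to the single $q$-independent condition $d < s+k$. Hence (i) is equivalent to $d < s+k$, and it suffices to derive this from (ii).

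For the main step, suppose (ii) and write $\pi = f + g$ with $f \in L_{s+k}^{q_1}$, $g \in L_{s+k}^{q_2}$. I argue by contradiction, assuming $d > s+k$ (the only possibility left). Using $\pi_d \not\equiv 0$, I pick $\psi \in \mathcal{C}_0^\infty$ with $\psi \ge 0$ supported in a small ball on which $\pi_d$ keeps one strict sign, so $\int \psi\, \pi_d \ne 0$. The change of variable $y = x/R$ gives
\[
\int \psi(x/R) \pi(x)\, dx = R^{d+N} \int \psi(y)\, \pi_d(y)\, dy + O(R^{d+N-1}),
\]
so the left-hand side is of exact order $R^{d+N}$ as $R \to \infty$. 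On the other hand, $\psi(\cdot/R)$ is supported where $|x| \sim R$, so $(1+|x|)$ is comparable to $R$ there. H\"older's inequality against the weight defining $L_{s+k}^{q_1}$ (when $q_1 < \infty$), or the pointwise bound $|f| \le C(1+|x|)^{s+k}$ (when $q_1 = \infty$), bounds $|\int \psi(\cdot/R) f\, dx|$ by $O(R^{s+k+N})$, and the same for $g$. Since $d + N > s+k+N$, this contradicts the growth of the left side.

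No step should present a real obstacle. The care points are only the positivity $c > 0$ in the first step (which hinges on $\pi_d$ being a nonzero continuous function on $\Bbb{S}^{N-1}$) and the uniform-in-$R$ bookkeeping of the H\"older estimate in the two cases $q_i < \infty$ and $q_i = \infty$; both are routine.
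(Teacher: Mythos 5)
Your proof is correct, but the key step (ii) $\Rightarrow$ $d<s+k$ is carried out by a genuinely different mechanism than the paper's. The paper bounds $|\pi|$ from below by $\delta(1+|x|)^{d}$ on a sector $\Sigma_{R}$, observes that $\Sigma_{R}$ must then be covered by the two sets $E_{f},E_{g}$ where $|f|$ resp.\ $|g|$ exceeds $(\delta/2)(1+|x|)^{d}$, and shows via a Chebyshev-type estimate that each of these sets has finite measure for the weight $(1+|x|)^{-(s+k-d)q_{i}-N}$ when $q_{i}<\infty$ (with a separate boundedness argument when $q_{i}=\infty$), whereas the sector has infinite such measure unless $d<s+k$. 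You instead pair $\pi$ against the dilated bump $\psi(\cdot/R)$, extract the exact order $R^{d+N}$ from the homogeneous top part $\pi_{d}$, and beat it with the $O(R^{s+k+N})$ bound on $\int\psi(\cdot/R)f$ and $\int\psi(\cdot/R)g$ coming from H\"older against the defining weight (or the pointwise bound when $q_{i}=\infty$). Your duality argument treats finite and infinite $q_{i}$ almost uniformly, avoiding the paper's case analysis at $q_{i}=\infty$; the paper's argument is slightly more elementary (no test functions or scaling) and needs only the crude sufficiency $d<s+k\Rightarrow\pi\in L_{s+k}^{q}$, not your full spherical-mean characterization. One point you should make explicit: the small ball on which $\pi_{d}$ keeps a strict sign must be chosen away from the origin (automatic for $d\geq 1$ since $\pi_{d}(0)=0$, but not for $d=0$), as your estimate $\|\psi(\cdot/R)(1+|x|)^{s+k+N/q_{1}}\|_{q_{1}'}=O(R^{s+k+N})$ uses $(1+|x|)\sim R$ on the support and fails if the support reaches the origin and $s+k+N/q_{1}<0$.
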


\begin{proof}
(i) $\Rightarrow $ (ii) is obvious.

(ii) $\Rightarrow $ (i). With no loss of generality, assume $\pi \neq 0$ and
let $d:=\deg \pi \leq k-1.$ Since $\pi \in L_{s+k}^{q}$ for every $1\leq
q\leq \infty $ if $d<s+k,$ it suffices to prove that, indeed, $d<s+k.$

We shall use the remark that $|\pi (x)|$ grows (pointwise) as fast as $%
|x|^{d}$ on some sector (open cone with vertex at the origin) $\Sigma .$
Specifically, there are $\delta >0$ and $R>0$ such that $|\pi (x)|\geq
\delta (1+|x|)^{d}$ for $x\in \Sigma _{R}:=\{x\in \Sigma :|x|\geq R\}.$

Write $\pi =f+g$ with $f\in L_{s+k}^{q_{1}}$ and $g\in L_{s+k}^{q_{2}}.$
Since $|\pi |\leq |f|+|g|,$ it follows that if $x\in \Sigma _{R},$ then
either $|f(x)|\geq (\delta /2)(1+|x|)^{d}$ or $|g(x)|\geq (\delta
/2)(1+|x|)^{d}.$ Set $E_{f}:=\{x\in \Bbb{R}^{N}:|f(x)|\geq (\delta
/2)(1+|x|)^{d}\}.$ If $q_{1}<\infty ,$ then $%
\int_{E_{f}}(1+|x|)^{-(s+k-d)q_{1}-N}dx\leq (2/\delta
)^{q_{1}}||f||_{L_{s+k}^{q_{1}}}^{q_{1}}<\infty ,$ i.e., $E_{f}$ has finite $%
w_{1}(x)dx$ measure, where $w_{1}(x):=(1+|x|)^{-(s+k-d)q_{1}-N}.$ Likewise,
if $E_{g}:=\{x\in \Bbb{R}^{N}:|g(x)|\geq (\delta /2)(1+|x|)^{d}\}$ and $%
q_{2}<\infty ,$ then $E_{g}$ has finite $w_{2}(x)dx$ measure where $%
w_{2}(x):=(1+|x|)^{-(s+k-d)q_{2}-N}.$ Thus, both $E_{f}$ and $E_{g}$ have
finite $w_{0}(x)dx$ measure, where $w_{0}(x):=(1+|x|)^{-(s+k-d)q_{0}-N}$ and 
$q_{0}=q_{1}$ or $q_{0}=q_{2},$ depending upon which of the two exponents $%
-(s+k-d)q_{i}-N,i=1,2,$ is smaller.

As a result, $\Sigma _{R}\subset E_{f}\cup E_{g}$ has finite $w_{0}(x)dx$
measure $\int_{\Sigma _{R}}(1+|x|)^{-(s+k-d)q_{0}-N}dx$ and a calculation in
spherical coordinates shows at once that this happens if and only if $d<s+k.$

Suppose now that $q_{1}=\infty ,$ so that $(1+|x|)^{d}(1+|x|)^{-(s+k)}$ is
bounded on $E_{f}.$ This can only happen if $E_{f}$ is bounded or if $d\leq
s+k.$ In the latter case, $d<s+k$ since $0\leq d\leq k-1$ and $s\notin
\{-k,...,-1\}.$ Assume then that $E_{f}$ is bounded. If $q_{2}<\infty ,$ the
result that $E_{g}$ has finite $w_{2}(x)dx$ measure continues to hold and
then the same thing is true of $E_{f}$ (bounded). Hence, $\Sigma _{R}$ has
finite $w_{2}(x)dx$ measure and the same argument as before yields $d<s+k.$
By symmetry, this remains true if $q_{2}=\infty $ and $q_{1}<\infty .$

Lastly, suppose that $q_{1}=q_{2}=\infty .$ Since $\Sigma _{R}\subset
E_{f}\cup E_{g}$ is unbounded, one at least among $E_{f}$ and $E_{g}$ is
unbounded and so, as was seen above, $d<s+k.$
\end{proof}

\begin{remark}
\label{rm7}Lemma \ref{lm11} is also true, with the same proof, on $\Bbb{R}%
_{\pm }.$
\end{remark}

\begin{theorem}
\label{th12} Suppose that $k\in \Bbb{N},$ that $1\leq p<\infty $ and that $
s\in \Bbb{R},s\notin \{-k,...,-1\}$ (it is not assumed that $N>1$ if $s<-1$%
). Define $k_{s}:=k$ if $s>-1,k_{s}:=E(s+k+1)$ (integer part) if $s\in
(-k,-1)$ and $k_{s}:=0$ if $s<-k.$ Then, \newline
(i) $W_{s}^{k,q,p}\hookrightarrow W_{s}^{k,p,p}$ for every $1\leq q\leq
\infty $. \newline
(ii) If $u\in W_{s}^{k,p,p},$ then $\nabla ^{k-j}u\in (L_{s+j}^{q})^{\nu
(k-j,N)}$ for every $1\leq j\leq k$ and every $q\in I_{j,p}$ (see (\ref{4}))
and there is a constant $C>0$ independent of $u$ such that 
\begin{equation}
||\,|\nabla ^{k-j}u|\,||_{L_{s+j}^{q}}\leq C||\,|\nabla
^{k}u|\,||_{L_{s}^{p}}\text{ if }1\leq j\leq k-k_{s}  \label{35}
\end{equation}
and that 
\begin{equation}
||\,|\nabla ^{k-j}u|\,||_{L_{s+j}^{q}}\leq C(||u||_{L_{s+k}^{p}}+||\,|\nabla
^{k}u|\,||_{L_{s}^{p}}).\text{ if }1\leq j\leq k.  \label{36}
\end{equation}
(iii) $W_{s}^{k,q,p}=W_{s}^{k,p,p}$ for every $q\in I_{k,p},$ with
equivalent norms as $q$ is varied. Furthermore, if $s<-k,$ the norm of $%
W_{s}^{k,p,p}$ is equivalent to $||\,|\nabla ^{k}u|\,||_{L_{s}^{p}}.$
\end{theorem}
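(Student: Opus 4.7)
The strategy is to reduce everything to Theorem \ref{th9} by extracting the polynomial $\pi_u$ and working with the finite-dimensional space
\[
V := \mathcal{P}_{k-1} \cap L_{s+k}^{p},
\]
on which all relevant norms and seminorms are comparable. Lemma \ref{lm11} is the pivot: it shows that membership in $V$ is actually independent of $p$, and that a polynomial $\pi \in \mathcal{P}_{k-1}$ lies in $V$ as soon as it belongs to $L_{s+k}^{q_1} + L_{s+k}^{q_2}$ for any single pair of exponents.

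For part (i), given $u \in W_{s}^{k,q,p}$, Theorem \ref{th9} (with $j = k$) furnishes $\pi_u \in \mathcal{P}_{k-1}$ satisfying $u - \pi_u \in L_{s+k}^{p}$ and $||u - \pi_u||_{L_{s+k}^{p}} \leq C||\,|\nabla^{k}u|\,||_{L_{s}^{p}}$. Combined with $u \in L_{s+k}^{q}$, this gives $\pi_u \in L_{s+k}^{q} + L_{s+k}^{p}$, so Lemma \ref{lm11} yields $\pi_u \in V$ and hence $u \in L_{s+k}^{p}$. To quantify $||\pi_u||_{L_{s+k}^{p}}$, I pick a basis $\{e_1, \dots, e_M\}$ of $V$ and a dual family of functionals $\Lambda_i(v) := \int v\,\varphi_i$ with $\varphi_i \in \mathcal{C}_{0}^{\infty}$ (possible because $V$ is a finite-dimensional space of polynomials, so point evaluations at generic points separate it and may be mollified). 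Writing $\pi_u = \sum_i \Lambda_i(\pi_u)\, e_i$ and splitting $\Lambda_i(\pi_u) = \int u\,\varphi_i - \int (u - \pi_u)\,\varphi_i$, each contribution is controlled by local $L^{1}$ integration in terms of $||u||_{L_{s+k}^{q}}$ and $||u - \pi_u||_{L_{s+k}^{p}}$, respectively, delivering the embedding estimate.

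For part (ii), assume $u \in W_{s}^{k,p,p}$, so both $u$ and $u - \pi_u$ lie in $L_{s+k}^{p}$ and therefore $\pi_u \in V$. A short calculation in spherical coordinates shows that $\deg \pi_u \leq k_s - 1$: indeed $V = \{0\}$ when $s < -k$ and $V = \mathcal{P}_{k-1}$ when $s > -1$, with the intermediate cases $s \in (-k,-1)$ governed by the same bound. When $1 \leq j \leq k - k_s$, the inequality $k - j \geq k_s > \deg \pi_u$ forces $\nabla^{k-j}\pi_u \equiv 0$, and (\ref{35}) is then immediate from (\ref{28}). For (\ref{36}), I split $\nabla^{k-j}u = \nabla^{k-j}(u - \pi_u) + \nabla^{k-j}\pi_u$: the first summand satisfies (\ref{28}), while for the polynomial correction I verify that $\nabla^{k-j}V \subset L_{s+j}^{q}$ for every $q \in I_{j,p}$. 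This reduces to the degree inequality $k_s + j - k - 1 < s + j$ when $q < \infty$, strict because $s \notin \{-k,\dots,-1\}$ makes $k_s < s+k+1$ strict (the borderline $q = \infty$ case tolerates equality). Finite-dimensionality of $V$ then yields $||\,|\nabla^{k-j}\pi_u|\,||_{L_{s+j}^{q}} \leq C||\pi_u||_{L_{s+k}^{p}}$, and combining with part (i) gives (\ref{36}).

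Part (iii) is assembled from part (i) (the embedding $W_{s}^{k,q,p} \hookrightarrow W_{s}^{k,p,p}$) and from (\ref{36}) with $j = k$ (the reverse embedding valid for $q \in I_{k,p}$), so that the norms for all such $q$ are mutually equivalent. For $s < -k$, $V = \{0\}$ forces $\pi_u = 0$, so $u = u - \pi_u$ and Theorem \ref{th9} directly yields $||u||_{L_{s+k}^{p}} \leq C||\,|\nabla^{k}u|\,||_{L_{s}^{p}}$, which is the claimed equivalence. The main obstacle is the degree bookkeeping that couples the integrability threshold in $L_{s+j}^{q}$ with the index $k_s$ in each connected component of $\Bbb{R} \setminus \{-k,\dots,-1\}$; once this is in place, the finite-dimensional polynomial machinery handles the estimates uniformly across all cases.
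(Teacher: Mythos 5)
Your argument follows essentially the same route as the paper's: Theorem \ref{th9} with $j=k$ produces $\pi _{u}\in \mathcal{P}_{k-1}$ with $u-\pi _{u}\in L_{s+k}^{p},$ Lemma \ref{lm11} upgrades $\pi _{u}\in L_{s+k}^{p}+L_{s+k}^{q}$ to $\pi _{u}\in L_{s+k}^{p},$ and the identification $\mathcal{P}_{k-1}\cap L_{s+k}^{p}=\mathcal{P}_{k_{s}-1}$ (which is correct: a polynomial of degree $d$ lies in $L_{s+k}^{p}$ iff $d<s+k,$ and $k_{s}-1=E(s+k)$ in the nonintegral case) does the degree bookkeeping for (\ref{35}), (\ref{36}) and (iii). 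The only methodological difference is that you obtain the continuity of the embeddings quantitatively, via a dual basis of $V$ realized by integration against fixed test functions and finite-dimensionality of the polynomial correction, whereas the paper simply invokes the closed graph theorem twice; both are valid, and yours has the mild advantage of exhibiting how the constants arise. Your derivation of $\deg \pi _{u}\leq k_{s}-1$ directly from $\pi _{u}\in V$ is also marginally cleaner than the paper's appeal to uniqueness of the higher-degree part of $\pi _{u}.$

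There is one genuine gap: the case $N=1,$ $s<-1,$ which the theorem explicitly includes (``it is not assumed that $N>1$ if $s<-1$'') but which Theorem \ref{th9} excludes, since its hypotheses require $N>1$ whenever $s<-1.$ Your proof invokes Theorem \ref{th9} unconditionally, so as written it does not cover this case. The repair is the one the paper uses: by Remarks \ref{rm5} and \ref{rm7}, Theorem \ref{th9} and Lemma \ref{lm11} remain valid on each half-line $\Bbb{R}_{\pm },$ so the argument is run separately on $\Bbb{R}_{-}$ and $\Bbb{R}_{+}$ (with possibly different polynomials $\pi _{u}^{\pm }$), and the two conclusions are glued using $u\in W_{loc}^{k,p}(\Bbb{R})$ to recover membership and the estimates on all of $\Bbb{R}.$ Without this detour, parts (i)--(iii) for $N=1,$ $s<-1$ remain unproved.
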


\begin{proof}
(i) In a first step, we also assume that $N>1$ if $s<-1.$ If $u\in
W_{s}^{k,q,p}$ with $1\leq q\leq \infty ,$ then $u\in L_{s+k}^{q}$ and $%
\nabla ^{k}u\in (L_{s}^{p})^{\nu (k,N)}.$ By Theorem \ref{th9}, there is a
polynomial $\pi _{u}\in \mathcal{P}_{k-1}$ such that $u-\pi _{u}\in
L_{s+k}^{p}$ and, since $u\in L_{s+k}^{q},$ it follows that $\pi _{u}\in
L_{s+k}^{p}+L_{s+k}^{q}.$ Thus, $\pi _{u}\in L_{s+k}^{p}$ by (ii) $%
\Rightarrow $ (i) in Lemma \ref{lm11}, so that $u\in L_{s+k}^{p}.$ This
shows that $W_{s}^{k,q,p}\subset W_{s}^{k,p,p}.$

If $s<-1$ and $N=1,$ the above still shows, by Remarks \ref{rm5} and \ref
{rm7}, that $W_{s}^{k,q,p}(\Bbb{R}_{\pm })\subset W_{s}^{k,p,p}(\Bbb{R}_{\pm
})$ for every $1\leq q\leq \infty .$ Thus, if $u\in W_{s}^{k,q,p}(\Bbb{R})$
with $1\leq q\leq \infty ,$ then $u\in W_{loc}^{k,p}(\Bbb{R})$ and $u\in
W_{s}^{k,p,p}(\Bbb{R}_{-})\cup W_{s}^{k,p,p}(\Bbb{R}_{+}),$ whence $u\in
W_{s}^{k,p,p}(\Bbb{R}).$ This shows that $W_{s}^{k,q,p}\subset W_{s}^{k,p,p}$
for every $1\leq q\leq \infty $ still holds when $N=1$.

That the above embeddings are continuous follows from the closed graph
theorem, for if $u_{n}\rightarrow u$ in $W_{s}^{k,q,p}$ and $%
u_{n}\rightarrow v$ in $W_{s}^{k,p,p},$ then $u_{n}$ tends to both $u$ and $
v $ in $\mathcal{D}^{\prime },$ so that $u=v.$

(ii) If $s<-k,$ so that $k_{s}=0,$ the polynomial $\pi _{u}$ of Theorem \ref
{th9} (when $N>1$) is unique, whence $\pi _{u}=0$ if $u\in
W_{s}^{k,p,p}\subset L_{s+k}^{p}$ and then (\ref{35}) follows from (\ref{28}%
). If $N=1,$ use the same arguments on $\Bbb{R}_{\pm }$ (Remark \ref{rm5}).
Evidently, (\ref{36}) follows from (\ref{35}).

Suppose now that $s\in (-k,-1),$ so that $s$ is not an integer and $%
k_{s}=E(s+k+1)<s+k+1.$ Assume $N>1.$ From the discussion after Theorem \ref
{th9}, the part of $\pi _{u}$ of degree greater than or equal to $k_{s}$ is
unique. Therefore, if $u\in W_{s}^{k,p,p}\subset L_{s+k}^{p},$ this part is $%
0,$ so that $\pi _{u}\in \mathcal{P}_{k_{s}-1}$ and (\ref{35}) follows from (%
\ref{28}) since $\nabla ^{k-j}\pi _{u}=0$ when $1\leq j\leq k-k_{s}.$

To prove (\ref{36}) (obvious from (\ref{35}) when $1\leq j\leq k-k_{s}$), we
first show that $\nabla ^{k-j}u\in (L_{s+j}^{q})^{\nu (k-j,N)}$ for $1\leq
j\leq k.$ By Theorem \ref{th9}, $\nabla ^{k-j}u-\nabla ^{k-j}\pi _{u}\in
(L_{s+j}^{q})^{\nu (k-j,N)}$ and, since $\pi _{u}\in \mathcal{P}_{k_{s}-1},$
it follows that $\nabla ^{k-j}\pi _{u}\in (\mathcal{P}_{k_{s}-1-k+j})^{\nu
(k-j,N)}.$ But $\mathcal{P}_{k_{s}-1-k+j}\subset L_{s+j}^{q}$ since $%
s+j>k_{s}-1-k+j$ (recall $k_{s}<s+k+1$) and so $\nabla ^{k-j}u\in
(L_{s+j}^{q})^{\nu (k-j,N)},$ as claimed.

This shows that $W_{s}^{k,p,p}\subset W_{s+j}^{k-j,p,q}.$ By the closed
graph theorem, the embedding is continuous and (\ref{36}) follows. As
before, if $N=1,$ repeat the same arguments on $\Bbb{R}_{\pm }.$

Suppose now that $s>-1,$ so that $k_{s}=k$ and (\ref{35}) is trivial. The
proof of (\ref{36}) proceeds as above, based on the remark that $\nabla
^{k-j}\pi _{u}\in (\mathcal{P}_{j-1})^{\nu (k-j,N)}$ and that $\mathcal{P}
_{j-1}\subset L_{s+j}^{q}$ since $s+j>j-1.$

(iii) By (i), $W_{s}^{k,q,p}\hookrightarrow W_{s}^{k,p,p}$ and, by (ii) with 
$j=k,$ $W_{s}^{k,p,p}\subset W_{s}^{k,q,p}$ if $q\in I_{k,p}.$ The
continuity of the embedding (hence the equivalence of norms) follows from (%
\ref{36}) with $j=k.$ If also $s<-k,$ then $k_{s}=0$ and, by (\ref{35}) with 
$j=k$ and $q=p\in I_{k,p},$ the norm of $W_{s}^{k,p,p}$ is equivalent with $%
|\,|\nabla ^{k}u|\,||_{L_{s}^{p}}.$
\end{proof}

\section{Exponential growth\label{exponential}}

If $s\in \Bbb{R}$ and $1\leq p\leq \infty ,$ define 
\begin{equation*}
L_{\exp ,s}^{p}:=\{u\in L_{loc}^{p}:e^{-s|x|}u\in L^{p}\},
\end{equation*}
equipped with the Banach space norm $||u||_{L_{\exp
,s}^{p}}:=||e^{-s|x|}u||_{p}.$ If $p<\infty ,$ $L_{\exp ,s}^{p}=L^{p}(\Bbb{R}
^{N};e^{-sp|x|}dx)$ and $L_{\exp ,0}^{p}=L^{p}=L_{-N/p}^{p}.$

It is readily checked that $u(x)=e^{t|x|}$ is in $L_{\exp ,s}^{p}$ for every 
$t<s$ and that $u(x)=(1+|x|)^{t}$ is in $L_{\exp ,s}^{p}$ for every $t\in 
\Bbb{\ R}$ if $s>0.$ Thus, when $p<\infty ,$ it is appropriate to say that
the functions of $L_{\exp ,s}^{p}$ grow slower than $e^{s|x|}$ at infinity
in the $L^{p}$ sense. The functions of $L_{\exp ,s}^{\infty }$ do not grow
faster than $e^{s|x|},$ and grow slower if $\lim_{R\rightarrow \infty }%
\limfunc{ess}\sup_{|x|>R}e^{-s|x|}|u|=0,$ or simply $\lim_{|x|\rightarrow
\infty }e^{-s|x|}u(x)=0$ if $u$ is continuous.

By Remark \ref{rm1}, Lemma \ref{lm1} (approximation by mollification) holds
in all the spaces $L_{\exp ,s}^{p}$ with $p<\infty .$ Also, the
one-dimensional Hardy inequalities of Lemmas \ref{lm2} and \ref{lm4} have
the following counterpart (\cite[Theorems 5.9 and 6.2]{OpKu90}):

\begin{lemma}
\label{lm13}Suppose that $1\leq p<\infty $ and that $s\neq 0.$ \newline
(i) If $s>0$ and $\rho >0,$ there is a constant $C>0$ such that 
\begin{equation}
\left( \int_{\rho }^{\infty }e^{-spr}r^{N-1}|f(r)-f(\rho )|^{p}dr\right)
^{1/p}\leq C\left( \int_{\rho }^{\infty }e^{-spr}r^{N-1}|f^{\prime
}(r)|^{p}dr\right) ^{1/p},  \label{37}
\end{equation}
for every locally absolutely continuous function $f$ on $[\rho ,\infty ).$%
\newline
(ii) If $s<0,$ there is a constant $C>0$ such that 
\begin{equation}
\left( \int_{0}^{\infty }e^{-spr}r^{N-1}|f(r)|^{p}dr\right) ^{1/p}\leq
C\left( \int_{0}^{\infty }e^{-spr}r^{N-1}|f^{\prime }(r)|^{p}dr\right)
^{1/p},  \label{38}
\end{equation}
for every absolutely continuous function $f$ on $(0,\infty )$ such that $%
\lim_{r\rightarrow \infty }f(r)=0.$
\end{lemma}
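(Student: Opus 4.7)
The plan is to deduce both (\ref{37}) and (\ref{38}) from the standard weighted one-dimensional Hardy inequality criterion (Bradley \cite{Br78}, or \cite[Theorems 5.9 and 6.2]{OpKu90}), applied with the same weight $w(r) = v(r) = e^{-spr} r^{N-1}$ on both sides. For $1 < p < \infty$ that criterion reduces (\ref{37}) to verifying
\begin{equation*}
\sup_{\xi > \rho} \Bigl( \int_\xi^\infty w \Bigr)^{1/p} \Bigl( \int_\rho^\xi v^{1-p'} \Bigr)^{1/p'} < \infty,
\end{equation*}
and (\ref{38}) to the analogous Muckenhoupt-type product with the direction of integration reversed and pivot at infinity. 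The case $p = 1$ is handled by the corresponding essential-supremum criterion and presents no extra difficulty, so I would focus on $p > 1$.

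For (\ref{37}), integration by parts gives $\int_\xi^\infty e^{-spr} r^{N-1}\,dr \sim (sp)^{-1} e^{-sp\xi} \xi^{N-1}$ as $\xi \to \infty$, and the identity $sp/(p-1) = sp'$ together with $1 - p' = -1/(p-1)$ yields $\int_\rho^\xi e^{sp' r} r^{-(N-1)/(p-1)}\,dr \sim (sp')^{-1} e^{sp'\xi} \xi^{-(N-1)/(p-1)}$. After taking the respective $p$-th and $p'$-th roots, the exponentials $e^{-s\xi}$ and $e^{s\xi}$ cancel (using $sp/p = s = sp'/p'$), and the polynomial factors $\xi^{(N-1)/p}$ and $\xi^{-(N-1)/p}$ cancel as well (using $(p'-1)/p' = 1/p$). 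Hence the product is $O(1)$ as $\xi \to \infty$, and it tends to $0$ as $\xi \to \rho^+$ since the $v^{1-p'}$ integral then vanishes while the $w$-tail stays bounded. Continuity between the two extremes gives a finite supremum.

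For (\ref{38}), the behavior at $\xi \to \infty$ is identical to (\ref{37}) by the same exponential/polynomial cancellation. The only new check is at $\xi \to 0^+$, where $\int_0^\xi w \sim N^{-1} \xi^N$ produces a first factor of order $\xi^{N/p}$. The second factor $\bigl( \int_\xi^\infty e^{sp' r} r^{-(N-1)/(p-1)}\,dr \bigr)^{1/p'}$ stays bounded if $p > N$, grows like $|\log \xi|^{1/p'}$ if $p = N$, and behaves like $\xi^{-(N-p)/p}$ if $p < N$. The product therefore tends to $0$ in all three cases, the last by the identity $N/p - (N-p)/p = 1$. The main obstacle is this $p < N$ case: it is tempting to expect a Sobolev-type restriction analogous to $q \in I_{1,p}$ in \lemref{lm4}, and the reason none arises here is precisely the exact cancellation $N/p - (N-p)/p = 1$, which only works because we have kept $q = p$ on the left-hand side; a genuine $q > p$ would add a $-N/q$ term that would break the balance.
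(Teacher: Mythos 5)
Your proof is correct and takes the same route the paper intends: the paper proves Lemma \ref{lm13} only by citation to \cite[Theorems 5.9 and 6.2]{OpKu90}, and your verification of the Muckenhoupt-type product $A(\xi)B(\xi)$ at $\xi\to\rho^{+}$ (resp.\ $\xi\to 0^{+}$) and $\xi\to\infty$, with the exact exponential and polynomial cancellations, is precisely the check those theorems reduce to, in the same style as the paper's own sketch of Lemma \ref{lm4}. (Your closing aside is slightly imprecise --- taking $q>p$ on the left only threatens the $\xi\to 0^{+}$ endpoint in (ii) and only for $q>p^{*}$ --- but this is tangential and does not affect the proof of the lemma as stated.)
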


Both (\ref{37}) and (\ref{38}) remain true, but will not be needed, when $p$
is replaced with $q\in [p,\infty ]$ in the left-hand side, with the usual
modification when $q=\infty .$

\begin{theorem}
\label{th14}Suppose that $k\in \Bbb{N},$ that $s\neq 0$ and that $N>1$ if $%
s<0.$ Let $u\in \mathcal{D}^{\prime }$ be such that $\nabla ^{k}u\in
(L_{\exp ,s}^{p})^{\nu (k,N)}$ with $1\leq p<\infty .$ Then, there is a
polynomial $\pi _{u}\in \mathcal{P}_{k-1}$ independent of $s$ and $p,$
unique if $s<0,$ such that $\nabla ^{k-j}(u-\pi _{u})\in (L_{\exp
,s}^{q})^{\nu (k-j,N)}$ for every $1\leq j\leq k$ and every $q\in I_{j,p}$
and there is a constant $C=C(s,j,p,q)>0$ independent of $u$ such that 
\begin{equation*}
||\,|\nabla ^{k-j}(u-\pi _{u})|\,||_{L_{\exp ,s}^{q}}\leq C||\,|\nabla
^{k}u|\,||_{L_{\exp ,s}^{p}}.
\end{equation*}
Furthermore, $\lim_{|x|\rightarrow \infty }e^{-s|x|}(\nabla
^{k-j}u(x)-\nabla ^{k-j}\pi _{u}(x))=0$ if $p=N=j=1$ or if $1\leq j\leq k$
and $p>N/j.$ (In particular, $\lim_{|x|\rightarrow \infty }e^{-s|x|}\nabla
^{k-j}u(x)=0$ if also $s>0.$)
\end{theorem}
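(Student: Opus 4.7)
The proof mirrors the structure of Theorem \ref{th9}: first establish the case $k=1$, then induct. The key substitutions are Lemma \ref{lm13} in place of Lemmas \ref{lm2} and \ref{lm4}, Remark \ref{rm1} ensuring that Lemma \ref{lm1} remains applicable to the weight $w(x)=e^{-sp|x|}$ (whose logarithm is Lipschitz), and the observation that in the exponential setting the Hardy inequalities do not shift the weight (the same $e^{-sr}$ appears on both sides of (\ref{37}) and (\ref{38})), so the induction on $k$ runs with fixed $s$ rather than $s\mapsto s+1$.

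For $k=1$ and $q=p$, I would replicate the proofs of Theorems \ref{th3}(i) and \ref{th5}(i). When $s>0$, Lemma \ref{lm13}(i) applied on each ray gives the radial weighted $L^p$ estimate for $u(\cdot,\sigma)-u(\rho,\sigma)$; integration over $\Bbb{S}^{N-1}$, combined with the trace inequality on $\partial B_\rho$, Poincar\'e--Wirtinger on $B_\rho$, and mollification, closes the estimate with $c_u=|B_\rho|^{-1}\int_{B_\rho}u$. When $s<0$ and $N>1$, I would follow Theorem \ref{th5}: define $v(r,\sigma):=\int_\infty^r\partial_r u(t,\sigma)\,dt$, which is well defined because $e^{sr}r^{(1-N)/p}\in L^{p'}(\varepsilon,\infty)$ for $\varepsilon>0$ when $s<0$; set $c_u(\sigma):=u-v$; use Poincar\'e--Wirtinger on $\Bbb{S}^{N-1}$ and $\lim_{r\to\infty}||v(r,\cdot)||_{p,\Bbb{S}^{N-1}}=0$ to conclude $c_u$ is a genuine constant; and apply Lemma \ref{lm13}(ii) for the norm bound. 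Uniqueness of $c_u$ for $s<0$ is automatic since $L_{\exp,s}^q$ contains no nonzero constant.

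For general $q\in I_{1,p}$ in the first-order case, the radial case follows from the $L^p$--$L^q$ extensions of (\ref{37}) and (\ref{38}) noted in the remark after Lemma \ref{lm13}, exactly as in parts (ii) of Theorems \ref{th3} and \ref{th5}. For the non-radial case, the symmetrization argument of Lemma \ref{lm6} carries over verbatim (its proof uses only the pointwise bound $|\nabla u_S|\le(N\omega_N)^{-1}\int_{\Bbb{S}^{N-1}}|\nabla u|\,d\sigma$ and H\"older), and the identity $c_u=c_{u_S}$ reduces the task to proving $||e^{-s|x|}u||_q\le C||e^{-s|x|}\,|\nabla u|\,||_p$ when $u_S=0$. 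This is the main obstacle: Lemma \ref{lm7} is proved by scale-invariance of power weights, which is unavailable for exponential ones. My plan is to decompose $\Bbb{R}^N$ into unit-width shells $A_j=\{j\le|x|<j+1\}$, on each of which $e^{-s|x|}$ is comparable to a constant, combine Sobolev embedding on $A_j$ with the vanishing mean $\int_{A_j}u=0$ (which follows from $u_S=0$), and exploit angular Poincar\'e--Wirtinger on each slice $u(r,\cdot)$. The naive PW constant on $A_j$ grows like $j$ (the smallest nonzero Neumann eigenvalue is $O(1/j^2)$), and the crux is to recover this loss---either via a spherical-harmonic decomposition of $u$ on each shell (reducing to a weighted one-dimensional Hardy estimate on each angular mode with its own eigenvalue) or via a Whitney-type chaining across the $O(j^{N-1})$ unit balls covering $A_j$, with the exponential factor providing the absorption when combined with the radial Hardy control already available from the $q=p$ step.

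With the $k=1$ case in hand, the induction on $k$ is essentially identical to Theorem \ref{th9}, and in fact simpler since $s$ is fixed throughout. Given $\nabla^k u\in(L_{\exp,s}^p)^{\nu(k,N)}$, apply the first-order result to each $\partial^\alpha u$, $|\alpha|=k-1$, to obtain constants $c_\alpha$ and form $\pi_{u,k-1}$ as in (\ref{31}); then $\nabla^{k-1}(u-\pi_{u,k-1})\in(L_{\exp,s}^{p_1})^{\nu(k-1,N)}$ for every finite $p_1\in I_{1,p}$, and the inductive hypothesis with the same $s$ and $k-1$ in place of $k$ supplies a polynomial $\pi_v\in\mathcal{P}_{k-2}$ such that $\pi_u:=\pi_v+\pi_{u,k-1}$ works. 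The identity $\bigcup_{p_1\in I_{1,p},\,p_1<\infty}I_{j-1,p_1}=I_{j,p}$ is unchanged from the polynomial setting. Uniqueness of $\pi_u$ for $s<0$ follows from $L_{\exp,s}^q$ containing no nonzero polynomial, and the pointwise-limit assertions when $p>N/j$ come from the $L^\infty$ version of Lemma \ref{lm13}, exactly as in Theorem \ref{th9}.
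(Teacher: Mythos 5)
Your treatment of the case $k=1$, $q=p$ matches the paper's (which indeed just reruns Theorems \ref{th3} and \ref{th5} with Lemma \ref{lm13} in place of Lemmas \ref{lm2} and \ref{lm4}), and your induction on $k$ is the paper's induction. The gap is exactly where you flag it: the passage from $q=p$ to general $q\in I_{1,p}$ for non-radial $u$. Your plan --- symmetrize, reduce to $u_{S}=0$, and replace Lemma \ref{lm7} by a shell decomposition --- is not carried out, and the difficulty is real, not cosmetic. On the shell $A_{j}=\{j\leq |x|<j+1\}$ the weight $e^{-s|x|}$ is comparable to the constant $e^{-sj}$, so it cannot absorb anything \emph{within} a shell; and the Poincar\'{e}--Wirtinger constant of $A_{j}$ for mean-zero functions genuinely grows like $j$ (lowest angular Neumann mode), so summing the naive shell estimates yields $||e^{-s|x|}u||_{p}\leq C||\,|x|e^{-s|x|}|\nabla u|\,||_{p}$, which is weaker than what is claimed by a full factor of $|x|$. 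The vanishing of the full spherical mean $\int_{\Bbb{S}^{N-1}}u(r,\sigma)\,d\sigma=0$ for each $r$ (not just $\int_{A_{j}}u=0$) is what would have to be exploited mode by mode, and neither of your two suggested repairs (spherical harmonics, Whitney chaining) is executed. The same gap recurs in your claim about the pointwise limit for non-radial $u$, which in Theorem \ref{th9} came from the scaling part of Lemma \ref{lm7}.

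The paper avoids all of this with a one-line observation that is worth internalizing: because the exponential Hardy inequalities do not shift the weight, once the $q=p$ case gives $u-c_{u}\in L_{\exp,s}^{p}$ with $||u-c_{u}||_{L_{\exp,s}^{p}}\leq C||\,|\nabla u|\,||_{L_{\exp,s}^{p}}$, one sets $w:=e^{-s|x|}(u-c_{u})$ and computes $\nabla w=e^{-s|x|}\nabla u-se^{-s|x|}|x|^{-1}x\,(u-c_{u})$, so that $||w||_{1,p,\Bbb{R}^{N}}\leq C||\,|\nabla u|\,||_{L_{\exp,s}^{p}}$. The \emph{unweighted} Sobolev embedding $W^{1,p}\hookrightarrow L^{q}$ for $q\in I_{1,p}$ then gives the full range of $q$ at once, with no symmetrization and no analog of Lemma \ref{lm7}; and for $p>N$ the standard decay of $W^{1,p}$ functions at infinity gives the pointwise limit. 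This trick is unavailable in the polynomial-weight setting precisely because there the target weight is $(1+|x|)^{-(s+1)}$ rather than $(1+|x|)^{-s}$, which is why Theorem \ref{th8} needs the heavier machinery. You should replace your shell-decomposition plan by this argument; as written, your proof of the theorem is incomplete at its central step.
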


\begin{proof}
Suppose first that $k=1,$ so that $\pi _{u}$ is a constant $c_{u}.$ The case 
$q=p$ can be handled along the lines of the proof of Theorem \ref{th3} (when 
$s>0$) or Theorem \ref{th5} (when $s<0$), upon merely using Lemma \ref{lm13}
instead of Lemma \ref{lm2} or Lemma \ref{lm4}. We skip the details.

It follows from $\nabla u\in (L_{\exp ,s}^{p})^{N}$ and $u-c_{u}\in L_{\exp
,s}^{p}$ that $e^{-s|x|}(u-c_{u})\in W^{1,p}$ (use $\nabla
e^{-s|x|}=-se^{-s|x|}|x|^{-1}x$). Thus, by the Sobolev embedding theorem, $%
e^{-s|x|}(u-c_{u})\in L^{q},$ i.e., $u-c_{u}\in L_{\exp ,s}^{q},$ for every $%
q\in I_{1,p}.$ If $p>N,$ it is well-known that the functions of $W^{1,p}$
tend to $0$ at infinity, so that $\lim_{|x|\rightarrow \infty
}e^{-s|x|}(u(x)-c_{u})=0.$ If $s>0,$ then $\lim_{|x|\rightarrow \infty
}e^{-s|x|}c_{u}=0$ and so $\lim_{|x|\rightarrow \infty }e^{-s|x|}u(x)=0.$
This proves the theorem when $k=1.$ The general case follows by induction;
see the proof of Theorem \ref{th9}.
\end{proof}

When $s=0,$ Theorem \ref{th9} with $s=-N/p$ (that is, Corollary \ref{cor10})
must be substituted for Theorem \ref{th14}, at least when $p\neq
N/j,j=1,...,k.$

If $s<0$ and $k=1,$ the polynomial $\pi _{u}$ is a constant $c_{u}$ given by
(\ref{20})). If $s>0$ and $k=1,$ a formula for $c_{u}$ is $|B_{\rho
}|^{-1}\int_{B(x_{0},\rho )}u$ where $x_{0}\in \Bbb{R}^{N}$ and $\rho >0$
are chosen independent of $u.$ When $k\in \Bbb{N},$ these formulas can be
used to find the coefficients of $\pi _{u};$ see the comments after Theorem 
\ref{th9}.

There is also an analog of Theorem \ref{th12}, with an entirely similar
proof. Define 
\begin{equation*}
W_{\exp ,s}^{k,q,p}:=\{u\in L_{\exp ,s}^{q}:\nabla ^{k}u\in (L_{\exp
,s}^{p})^{\nu (k,N)}\},
\end{equation*}
with Banach space norm $||u||_{L_{\exp ,s}^{q}}+||\,|\nabla
^{k}u|\,||_{L_{\exp ,s}^{p}}.$

\begin{theorem}
\label{th15}Suppose that $k\in \Bbb{N},$ that $1\leq p<\infty $ and that $%
s\in \Bbb{R},s\neq 0$ (it is not assumed that $N>1$ if $s<0$). Then, \newline
(i) $W_{\exp ,s}^{k,q,p}\hookrightarrow W_{\exp ,s}^{k,p,p}$ for every $%
1\leq q\leq \infty $. \newline
(ii) If $u\in W_{\exp ,s}^{k,p,p},$ then $\nabla ^{k-j}u\in (L_{\exp
,s}^{q})^{\nu (k-j,N)}$ for every $1\leq j\leq k$ and every $q\in I_{j,p}$
and there is a constant $C>0$ independent of $u$ such that 
\begin{equation*}
||\,|\nabla ^{k-j}u|\,||_{L_{\exp ,s}^{q}}\leq C||\,|\nabla
^{k}u|\,||_{L_{\exp ,s}^{p}},
\end{equation*}
when $s<0$ and that 
\begin{equation*}
||\,|\nabla ^{k-j}u|\,||_{L_{\exp ,s}^{q}}\leq C(||u||_{L_{\exp
,s}^{p}}+||\,|\nabla ^{k}u|\,||_{L_{\exp s}^{p}}).
\end{equation*}
(iii) $W_{\exp ,s}^{k,q,p}=W_{\exp ,s}^{k,p,p}$ for every $q\in I_{k,p},$
with equivalent norms as $q$ is varied. Furthermore, if $s<0,$ the norm of $%
W_{\exp ,s}^{k,p,p}$ is equivalent to $||\,|\nabla ^{k}u|\,||_{L_{\exp
,s}^{p}}.$
\end{theorem}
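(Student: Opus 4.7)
The plan is to transfer the proof of Theorem \ref{th12} line by line, with Theorem \ref{th14} playing the role of Theorem \ref{th9}. The only ingredient to replace is Lemma \ref{lm11}, and fortunately its exponential analog is trivial: for a polynomial $\pi \in \mathcal{P}_{k-1}$, the product $e^{-s|x|}\pi(x)$ either decays super-polynomially (when $s>0$, so $\pi \in L_{\exp,s}^{q}$ for \emph{every} $q \in [1,\infty]$) or grows exponentially unless $\pi \equiv 0$ (when $s<0$, so $\pi \in L_{\exp,s}^{q}$ for any single $q$ already forces $\pi = 0$). Either way, $\pi \in L_{\exp,s}^{q_{1}}+L_{\exp,s}^{q_{2}}$ immediately implies $\pi \in L_{\exp,s}^{q}$ for every $q$.

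For (i), let $u \in W_{\exp,s}^{k,q,p}$. Theorem \ref{th14} applied with $j=k$ produces $\pi_{u} \in \mathcal{P}_{k-1}$ with $u-\pi_{u} \in L_{\exp,s}^{p}$. Combined with $u \in L_{\exp,s}^{q}$, this gives $\pi_{u} \in L_{\exp,s}^{p}+L_{\exp,s}^{q}$, whence $\pi_{u} \in L_{\exp,s}^{p}$ by the polynomial dichotomy, so $u \in W_{\exp,s}^{k,p,p}$. The embedding is continuous by the closed graph theorem, exactly as in Theorem \ref{th12}(i). The case $N=1$, $s<0$ is handled by running the argument on each of $\Bbb{R}_{\pm}$ separately; this is legitimate because the restriction $N>1$ in Theorem \ref{th14} enters only through Theorem \ref{th5}, which admits a half-line version (Remark \ref{rm3}), and the mollification step (Lemma \ref{lm1}, Remark \ref{rm1}) together with the one-dimensional Hardy inequality of Lemma \ref{lm13} transfer without change.

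For (ii), take $u \in W_{\exp,s}^{k,p,p}$ and let $\pi_{u}$ be given by Theorem \ref{th14}. If $s<0$, $\pi_{u}$ is unique and $u \in L_{\exp,s}^{p}$ forces $\pi_{u} \in L_{\exp,s}^{p}$; by the dichotomy, $\pi_{u} = 0$, and the estimate $||\,|\nabla^{k-j}u|\,||_{L_{\exp,s}^{q}} \le C ||\,|\nabla^{k}u|\,||_{L_{\exp,s}^{p}}$ is then precisely the content of Theorem \ref{th14}. If $s>0$, each component of $\nabla^{k-j}\pi_{u}$ is a polynomial of degree at most $j-1$, hence lies in $L_{\exp,s}^{q}$ for every $q$; adding this to $\nabla^{k-j}(u-\pi_{u}) \in (L_{\exp,s}^{q})^{\nu(k-j,N)}$ yields $\nabla^{k-j}u \in (L_{\exp,s}^{q})^{\nu(k-j,N)}$. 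The quantitative bound involving $||u||_{L_{\exp,s}^{p}}+||\,|\nabla^{k}u|\,||_{L_{\exp,s}^{p}}$ then follows from the closed graph theorem applied to the linear map $u \mapsto \nabla^{k-j}u$ from $W_{\exp,s}^{k,p,p}$ into $(L_{\exp,s}^{q})^{\nu(k-j,N)}$; closedness is automatic since both convergences descend to $\mathcal{D}'$.

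Part (iii) is an immediate consequence: (i) and (ii) with $j=k$ give both inclusions for $q \in I_{k,p}$ with matching estimates, and when $s<0$ the inequality from (ii) with $j=k$, $q=p$ controls $||u||_{L_{\exp,s}^{p}}$ by $||\,|\nabla^{k}u|\,||_{L_{\exp,s}^{p}}$, so the $W_{\exp,s}^{k,p,p}$ norm reduces to the gradient seminorm. The main obstacle, if any, is just the bookkeeping for the $N=1$, $s<0$ half-line reduction; all other ingredients are direct transcriptions of the polynomial-weight arguments.
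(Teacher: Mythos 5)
Your proposal is correct and follows exactly the route the paper intends: the paper explicitly states that Theorem \ref{th15} has ``an entirely similar proof'' to Theorem \ref{th12}, and you carry that out faithfully, with Theorem \ref{th14} replacing Theorem \ref{th9} and the (easy) exponential dichotomy for polynomials replacing Lemma \ref{lm11}. Your observations that $s<0$ kills the whole polynomial (so the $k_{s}$ bookkeeping of Theorem \ref{th12} collapses to the two cases $k_{s}=k$ and $k_{s}=0$) and that the $N=1$, $s<0$ case is handled on $\Bbb{R}_{\pm}$ are exactly the right points to make.
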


\end{document}